\documentclass{cmslatex}
\usepackage[paperwidth=7in, paperheight=10in, margin=.875in]{geometry}
\usepackage[backref,colorlinks,linkcolor=red,anchorcolor=green,citecolor=blue]{hyperref}
\usepackage{amsfonts,amssymb,amsmath,graphicx}
\usepackage{inputenc}
\usepackage{footnote}
\usepackage{float}
\usepackage{enumerate}
\usepackage{cite}
\usepackage{ulem,booktabs,multirow}
\usepackage{bm,units}
\usepackage{physics}
\usepackage{tikz}
\usepackage{verbatim}
\usepackage{subfigure}
\usepackage{xr}
\usepackage{lineno}

\usepackage{xcolor}

\renewcommand{\d}{\mathop{}\!\mathrm{d}} 
\newcommand{\D}{\mathrm{D}} 

\newcommand{\pt}{\partial}

\newcommand{\ve}{\varepsilon}

\newcommand{\e}{\mathrm{e}}  
\newcommand{\ii}{\mathrm{i}}  

\newcommand{\BbbC}{\mathbb{C}}
\newcommand{\BbbE}{\mathbb{E}}
\newcommand{\BbbN}{\mathbb{N}}
\newcommand{\BbbR}{\mathbb{R}}
\newcommand{\BbbS}{\mathbb{S}}

\newcommand{\dist}{\operatorname{dist}}

\renewcommand{\emph}{\textit}

\newcommand{\fracd}[2]{\frac{\displaystyle #1}{\displaystyle #2}}

\makeatletter 
\@ifundefined{tr}{
    \newcommand{\tr}{\operatorname{tr}}
}
\@ifundefined{divergence}{
    
}{
    
}
\makeatother

\allowdisplaybreaks

\definecolor{LightCerulean}{RGB}{118,210,251}
\definecolor{Raspberry}{RGB}{247, 67, 140}
\definecolor{PaleGold}{RGB}{252, 239, 164}
\definecolor{PaleRaspberry}{RGB}{250, 179, 209}
\definecolor{tabblue}{RGB}{ 31, 119, 180 }
\definecolor{taborange}{RGB}{ 255, 127, 14 }
\definecolor{tabgreen}{RGB}{ 44, 160, 44 }
\definecolor{tabred}{RGB}{ 214, 39, 40 }
\definecolor{tabpurple}{RGB}{ 148, 103, 189 }
\definecolor{tabbrown}{RGB}{ 140, 86, 75 }
\definecolor{tabpink}{RGB}{ 227, 119, 194 }
\definecolor{tabgray}{RGB}{ 127, 127, 127 }
\definecolor{tabolive}{RGB}{ 188, 189, 34 }
\definecolor{tabcyan}{RGB}{ 23, 190, 207 }

\usepackage{tikz-cd}
\usepackage{stmaryrd}
\usepackage{lineno}
\usetikzlibrary{decorations.markings}

\title{Uniform asymptotics and entropy structure of the Bingham distribution in arbitrary dimensions
\thanks{This work is funded by the National Natural Science Foundation of China (Nos.~12225102, T2321001, 12288101).}}
\author{Dawei Wu \thanks{School of Mathematical Sciences, Peking University, Beijing 100871, China (2201110052@pku.edu.cn).}
\and Lei Zhang \thanks{School of Mathematical Sciences, Beijing International Center for Mathematical Research, Center for Quantitative Biology, Center for Machine Learning Research, Peking University, Beijing 100871, China (zhangl@math.pku.edu.cn)}
\and Pingwen Zhang \thanks{School of Mathematics and Statistics, Wuhan University, Wuhan 430072, China; School of Mathematical Sciences, Peking University, Beijing 100871, China (pzhang@pku.edu.cn)}}

\begin{document}
\maketitle

\begin{abstract}
The Bingham distribution is widely used to model directional data with antipodal symmetry, and its normalizing constant $Z$ and moments play a central role in statistical inference and closure models.
Their behavior becomes singular when one or more eigenvalue gaps of the parameter matrix become unbounded.
In this work, we develop a uniform asymptotic framework for the Bingham distribution in arbitrary dimensions. 
Starting from an inverse-Laplace integral representation, we derive asymptotic expansions with explicit remainder estimates that are uniform with respect to arbitrary relative scales among multiple diverging eigenvalues. In a particular regime, the asymptotic series becomes absolutely convergent with an exponentially small remainder.
These results yield precise asymptotic profiles of the Bingham moments and characterize the degeneration of higher-dimensional distributions to lower-dimensional counterparts.
As an application to the Bingham closure, we prove that the entropy admits a decomposition into an explicit logarithmic leading term and a residual that is uniformly Lipschitz continuous on the moment simplex. Moreover, on each boundary face, the residual agrees with its lower-dimensional version up to an explicit additive constant. 
The results provide a unified description of the singular structure of the Bingham distribution and have implications for numerical closure and $Q$-tensor models of nematic liquid crystals.
\end{abstract}

\begin{keywords}
Bingham distribution, Bingham closure, entropy decomposition, asymptotic expansion
\end{keywords}

\begin{AMS}
41A60, 44A10, 62H11
\end{AMS}

\section{Introduction}

The Bingham distribution is a well-known probability distribution on the unit sphere, originally proposed by C.~Bingham in three dimensions \cite{bingham_distribution_1964,bingham_antipodally_1974}. In this paper, we study the generalized case in arbitrary dimension $d$, which is defined as follows.
\begin{definition}
The \textbf{$d$-dimensional Bingham distribution} $\mathrm{Bing}(B)$ has the probability density function
\begin{equation} \label{bingham}
    f(m)=\frac{1}{\omega_d Z} \exp(m\cdot Bm),
\end{equation}
defined for $m$ on the unit sphere $\BbbS^{d-1}\subset\BbbR^d$ with respect to the surface measure $\d m$. Here, the normalizing constant equals
\begin{equation} \label{bingham-Z}
    Z=\frac{1}{\omega_d}\int_{\BbbS^{d-1}} \exp(m\cdot Bm)\d m,
\end{equation}
$\omega_d=\frac{2\pi^{\frac{d}{2}}}{\Gamma(\frac{d}{2})}$ is the area of $\BbbS^{d-1}$, and $B$ is a $d\times d$ symmetric matrix.
\end{definition}
\begin{remark}
Conventionally, the parameter $B$ is restricted to the traceless matrices, but as we will explain later, this is merely one of many gauge conditions (i.e.~a choice of representative within the equivalence class $B\sim B+tI$). Therefore, we define the Bingham distribution here with a general symmetric $B$, and will discuss the gauge condition in more detail in Subsection \ref{subsec:gauge}.
\end{remark}
The Bingham distribution is the simplest distribution on $\BbbS^{d-1}$ with antipodal symmetry, i.e.~ensuring equivalence between $m$ and $-m$, and has become very useful in directional statistics \cite{mardia_statistics_1975}. 
Applications of the Bingham distribution include the modeling of demagnetization paths in palaeomagnetism \cite{kirschvink_least-squares_1980}, head-to-tail symmetric nematic liquid crystals \cite{ball_nematic_2010}, and high-dimensional parameters in variational auto-encoders \cite{davidson_hyperspherical_2018}.

Many analytic and computational questions concerning the Bingham distribution reduce to the evaluation of the normalizing constant $Z$ \eqref{bingham-Z} and its derivatives. Taking the matrix gradient (denoted by $\nabla$) with respect to $\ln Z(B)$, one gets the second-order moments of $\mathrm{Bing}(B)$:
\begin{equation} \label{bingham-mmnt}
\nabla\ln Z(B) =\frac{\nabla Z(B)}{Z(B)}=
\fracd{\int_{\BbbS^{d-1}} mm^T \exp(m\cdot Bm)\d m}{\int_{\BbbS^{d-1}}\exp(m\cdot Bm)\d m} = \mathbb{E}(mm^T).
\end{equation}
The moments $\BbbE(mm^T)$ obtained above are crucial in statistical modeling of directional data as they are directly related to observed data. The inverse determination of a Bingham distribution from $\BbbE(mm^T)$ is also an important problem known as the \textbf{Bingham closure}.
One example is the maximum-likelihood estimate \cite{chen_maximum_2021,kume_exact_2018}. With independent data points collected as $\{m_1,\ldots,m_N\}$, the negative log-likelihood according to \eqref{bingham} can be expressed by
\[-\log\mathcal{L}(B)=-\frac1N \ln\prod_{j=1}^N f(m_j;B) = \ln(\omega_d Z)-\qty(\frac1N\sum_{j=1}^N m_j m_j^T):B.\]
Taking the gradient with respect to $B$, we find that the maximum-likelihood estimator is exactly the Bingham closure from the empirical second-order moment.
Another classical example is the $Q$-tensor theory of nematic liquid crystals \cite{ball_nematic_2010,mottram_introduction_2014}, where the moment tensor $\BbbE(mm^T)$ in three dimensions (3D) is extensively used as the principal order parameter. In order to be physically correct, the free energy functional requires the underlying Bingham distribution as a quasi-equilibrium approximation \cite{liu_axial_2005}, and is computed by Bingham closure. Here, the (negative) entropy function $S$ is involved in the physical free energy as a function of $\BbbE(mm^T)$:
\begin{equation} \label{bingham-ent}
    S = \int_{\BbbS^{d-1}} f\ln (\omega_d f)\d m = B:\BbbE(mm^T) - \ln Z.
\end{equation}

Analysis and computation of the Bingham distribution reduce largely to evaluating $Z(B)$ and its derivatives.
As observed by Bingham \cite{bingham_distribution_1964,bingham_antipodally_1974}, $Z(B)$ is a confluent hypergeometric function of matrix argument and has no elementary expression. 
The main difficulty arises in singular regimes where one or more eigenvalue gaps of $B$ become unbounded. In such regimes, some moments vanish while the entropy \eqref{bingham-ent} diverges logarithmically \cite{ball_nematic_2010}, which makes both analysis and stable numerical evaluation substantially more delicate.
Existing methods, including Taylor expansions, interpolation \cite{luo_fast_2018}, and holonomic gradient methods \cite{kume_exact_2018,sei_calculating_2015} are only designed for bounded or moderately-sized $B$.
The saddlepoint approximation method \cite{kume_saddlepoint_2005} and some asymptotic methods on the dimensionality $d$ \cite{bagyan_complete_2024} have asymptotic exactness when $d$ goes to infinity, but do not provide explicit error estimates when several eigenvalues diverge at different rates.
In three dimensions, asymptotic formulas for $Z$ and its derivatives were obtained by Bingham \cite{bingham_distribution_1964} and Kent \cite{kent_asymptotic_1987}, and the Bingham closure was studied in detail by the liquid crystal modeling community \cite{ball_nematic_2010,wang_modelling_2021}. More recently, the authors derived a decomposition of the 3D Bingham entropy into a singular leading term and a Lipschitz correction term \cite{shi_molecular_2026}.
What is still missing is a general-dimensional theory that treats simultaneously multiple unbounded eigenvalues without imposing assumptions on their relative sizes, and that connects the resulting asymptotic structure of $Z$ and its moments to the singular geometry of the Bingham entropy.

In this paper, we develop such a uniform asymptotic theory for the Bingham distribution in arbitrary dimensions. Our contributions are three-fold.
First, we start from an inverse-Laplace integral representation \cite{sei_calculating_2015} to derive exponentially convergent integral formulas for $Z$ and all its derivatives (Theorem \ref{thm:DZ-keyhole-int}).
These representations allow us to obtain their asymptotic expansions when an arbitrary subset of eigenvalues tends to $-\infty$, with explicit remainder estimates that remain uniform with respect to the remaining bounded parameters, and, in particular, do not require any prescribed relative scaling among the diverging eigenvalues (Theorem \ref{thm:Mdk-asymp-mult} and Corollary \ref{cor:DMdk-asymp-mult}). The coefficients of the expansions are naturally expressed in terms of lower-dimensional Bingham quantities, revealing an intrinsic dimensional hierarchy.
Moreover, when the number of the bounded parameters is even, the formal asymptotic expansion becomes an absolutely convergent infinite series, with an exponentially small remainder (Theorem \ref{thm:uni-conv-even}).
Second, the uniform expansions yield precise asymptotic profiles for even-ordered Bingham moments (Proposition \ref{prop:za-asymp}). In particular, the second-order moments $z_j=\BbbE(m_j^2)$ associated with a diverging eigenvalue $\mu_j$ decay at the explicit rate $z_j\sim (2|\mu_j|)^{-1}$, whereas the remaining moments converge to those of the corresponding lower-dimensional Bingham distribution. 
Third, using the moment asymptotics, we characterize the singular structure of the Bingham entropy $S$ under Bingham closure,  and prove the decomposition
\[ S=-\frac12\ln(z_1\cdots z_d)+\Delta S, \]
where $\Delta S$ is uniformly Lipschitz continuous and extends to the boundary of the moment simplex (Theorem \ref{thm:Sq-asymp}). 
We further show that the boundary values of $\Delta S$ inherit the same dimensional hierarchy: on every face of the moment simplex, $\Delta S$ coincides with the lower-dimensional entropy residual up to an explicit additive constant (Theorem \ref{thm:dS-embed}).

\section{Integral representation}

In this section, we briefly present the integral representation of $Z$ and its derivatives \cite{chen_maximum_2021,sei_calculating_2015}. 

\subsection{Diagonalization and gauge condition} \label{subsec:gauge}

First, we need to diagonalize $B$ in \eqref{bingham} with a rotation, and normalize it by removing one trivial degree of freedom, which greatly simplifies the parameter.

Using a coordinate frame $R\in\mathrm{SO}(d)$ composed of the eigenvectors of $B$ (so that $R^TBR=\diag(\mu_1,\ldots,\mu_d)$ is diagonal), and changing the coordinate $\widetilde m = R^T m$, we find that the density function with respect to $\widetilde m$ equals
\[\tilde f(\widetilde m) =f(R\widetilde m)= \frac{1}{\omega_d Z(B)} \exp(\widetilde m \cdot R^TBR \widetilde m)
=\frac{1}{\omega_d Z(B)} \exp(\sum_{j=1}^d \mu_j \widetilde m_j^2).\]
That is, the distribution $m\sim\mathrm{Bing}(B)$ can be rotated to another distribution $\widetilde m\sim\mathrm{Bing}(\diag(\mu))$ with diagonal parameters.
Therefore, we can assume w.l.o.g.~that $B=\diag(\mu)=\diag(\mu_1,\ldots,\mu_d)$. By a harmless abuse of notation, we also denote the distribution by $\mathrm{Bing}(\mu)$, and write $Z$ as a function of $\mu$:
\begin{equation} \label{Z-mu}
    Z(\mu)=\frac{1}{\omega_d} \int_{\BbbS^{d-1}} \exp\left( \sum_{j=1}^d \mu_j m_j^2 \right)\d m.
\end{equation}
Evidently, $Z(\mu)$ is positive, smooth and symmetric with respect to its arguments. 

Since $|m|=1$, shifting all $\mu_j$'s by the same amount does not change the distribution:
\begin{equation}\label{Z-mupt-eq}
    \mathrm{Bing}(\mu) = \mathrm{Bing}(\mu+t),\ Z(\mu+t) = \e^t Z(\mu),\ \forall t\in\BbbR,
\end{equation} 
where vector-scalar addition is defined by $\mu+t\triangleq(\mu_1+t,\cdots,\mu_d+t)$. 
One derives \eqref{Z-mupt-eq} straightforwardly by noticing that
\[ f(m; \mu+t) = \frac{1}{\omega_d Z(\mu+t)} \exp(\sum_{j=1}^d \mu_j m_j^2 + t|m|^2) \overset{|m|^2=1}{=\!=\!=} \frac{\e^t Z(\mu)}{Z(\mu+t)} f(m;\mu),\]
and that both densities normalize to 1.
Conversely, $\mathrm{Bing}(\mu)=\mathrm{Bing}(\mu')$ implies that $\mu-\mu'$ is made of constants, as is seen by evaluating the densities at the coordinate
vectors $e_j$.

Thus, Bingham parameters are defined modulo common shifts, and a specific gauge needs to be chosen to make the parameter unique.
The conventional gauge is $\sum_j\mu_j=0$. For asymptotic analysis, however, we use the ordered gauge
\[0=\mu_1\ge \mu_2\ge\cdots\ge\mu_d,\]
which will be stated whenever needed.

\subsection{Hankel-type integral}

From now on, we will focus on the case where $B$ is diagonal, and study the normalizing constant $Z(\mu)$ \eqref{Z-mu}. 
To normalize the argument $\mu$, we will assume that $0=\mu_1\ge \mu_2\ge\cdots \ge \mu_d$, which will be explicitly stated at every usage for disambiguation.
We provide the integral representation of $Z(\mu)$ as an inverse Laplace transform. This result has been derived in \cite{kume_exact_2018,sei_calculating_2015}, but for completeness we present a proof in Appendix \ref{app:proof-of-lt}.
\begin{theorem} \label{thm:Z-lt}
For any $\mu\in\BbbR^d$, let $\lambda>\max_j \mu_j$, and then
\begin{equation}\label{Zd-is-inv-lt}
    Z(\mu) = \frac{\Gamma(\frac{d}{2})}{2 \pi\ii} \int_{\lambda-\ii\infty}^{\lambda+\ii\infty} \e^z \prod_{j=1}^d (z-\mu_j)^{-\frac12} \d z.
\end{equation}
Here and in what follows, the complex powers are defined by
\[z^s\triangleq \exp(s\ln z),\]
where $\ln z$ is defined such that $\Im(\ln z)=\arg z\in(-\pi,\pi)$. In particular, this choice ensures that $\Re z^{\frac12}>0$ for all $z\notin \BbbR_-.$ 
\end{theorem}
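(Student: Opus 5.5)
The strategy is to write $Z(\mu)$ as a Gaussian integral over $\BbbR^d$, recognize the result as a Laplace transform in a radial variable, and then invert it with the Bromwich formula.

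Fix $\lambda>\max_j\mu_j$ and consider the absolutely convergent integral
\[
I(\lambda)=\int_{\BbbR^d}\exp\Big(-\sum_{j=1}^d(\lambda-\mu_j)x_j^2\Big)\d x=\prod_{j=1}^d\sqrt{\frac{\pi}{\lambda-\mu_j}}.
\]
Next pass to polar coordinates $x=rm$, with $r>0$ and $m\in\BbbS^{d-1}$. The exponent becomes $-r^2(\lambda-\sum_j\mu_jm_j^2)$. Using \eqref{Z-mu}, this gives
\[
I(\lambda)=\int_0^\infty r^{d-1}\int_{\BbbS^{d-1}}\e^{-\lambda r^2}\e^{r^2\sum_j\mu_jm_j^2}\d m\,\d r
=\frac{\omega_d}{2}\int_0^\infty s^{\frac d2-1}\e^{-\lambda s}Z(s\mu)\d s,
\]
where we substituted $s=r^2$. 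So $\pi^{d/2}\prod_j(\lambda-\mu_j)^{-1/2}$ is the Laplace transform, evaluated at $\lambda$, of
\[
g(s)=\frac{\omega_d}{2}s^{\frac d2-1}Z(s\mu).
\]
The key observation is that the right-hand side depends on $\mu$ only through the product, which extends holomorphically in $\lambda$ to $\Re\lambda>\max_j\mu_j$.

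Now apply the complex inversion formula at the point $s=1$. The function $g$ is continuous on $(0,\infty)$, and it is smooth at $s=1$ because $Z$ is smooth. It also satisfies the bound $|g(s)|\le Cs^{d/2-1}\e^{s\max\mu_j}$. By the standard Bromwich–Mellin theorem,
\[
g(1)=\frac{1}{2\pi\ii}\,\mathrm{p.v.}\!\int_{\lambda-\ii\infty}^{\lambda+\ii\infty}\e^{z}\,\pi^{\frac d2}\prod_j(z-\mu_j)^{-\frac12}\d z .
\]
Since $\omega_d/2=\pi^{d/2}/\Gamma(\tfrac d2)$, the left side equals $\pi^{d/2}Z(\mu)/\Gamma(\tfrac d2)$, and \eqref{Zd-is-inv-lt} follows. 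We must also check that the principal-branch powers agree with the real positive roots used above. For real $z=\lambda$ the factor $z-\mu_j$ is positive. Each $(z-\mu_j)^{-1/2}$ is holomorphic on the half-plane $\Re z>\max\mu_j$, which avoids $\BbbR_-$ after the shift. By the identity theorem, the branch choice is therefore consistent.

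The main obstacle is the convergence of the Bromwich integral and the justification of the inversion. The integrand decays like $|z|^{-d/2}$ on the vertical line, so the integral converges absolutely only for $d\ge3$. For $d=1,2$ it converges only conditionally, as a symmetric principal value in which the factor $\e^{\ii\Im z}$ oscillates.

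I would handle this uniformly by reducing to the absolutely convergent case. Differentiation brings down a factor of $z$, so a $d=1,2$ integrand can be rewritten in terms of integrands with faster decay. One way is to introduce auxiliary variables $\mu_{d+1}=\mu_{d+2}=\cdots$, which raises the decay. A cleaner alternative is to deform the line into a Hankel contour wrapped around $(-\infty,\max\mu_j]$. On the large arcs to the left, $\e^{z}$ decays, so Jordan's lemma controls them. The resulting contour integral converges absolutely, and the inversion can then be verified by Fubini and the Hankel formula for $1/\Gamma$.

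For the direct verification I would use the second route. Expand $\prod_j(z-\mu_j)^{-1/2}$ as the Laplace transform of the convolution of $s^{-1/2}\e^{\mu_js}/\Gamma(\tfrac12)$ over $j$. Then apply Hankel's formula
\[
\frac{1}{2\pi\ii}\int_{\mathcal H}\e^{z}z^{-a}\d z=\frac{1}{\Gamma(a)}
\]
term by term. The interchanges of integration are justified by the exponential decay along the Hankel contour.
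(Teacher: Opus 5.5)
Your proposal is correct and is essentially the paper's proof: the Gaussian integral over $\BbbR^d$, polar coordinates with $s=r^2$ to recognize $\pi^{d/2}\prod_j(\lambda-\mu_j)^{-1/2}$ as the Laplace transform of $\frac{\omega_d}{2}s^{\frac d2-1}Z(s\mu)$, then Bromwich inversion at $s=1$. The paper fixes the branch by taking $\lambda$ complex in the Gaussian integral from the start, which is equivalent to your identity-theorem argument.

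The pointwise Bromwich theorem you invoke already covers $d\le 2$ as a symmetric limit, so the Hankel-contour detour is unnecessary. As sketched it would also not work: writing $\prod_j(z-\mu_j)^{-1/2}=\int_0^\infty \e^{-zs}h(s)\d s$ with $h$ your convolution is valid only for $\Re z>\max_j\mu_j$, and $\e^{z(1-s)}$ blows up along the left part of a Hankel contour when $s>1$, so that Fubini step is not justified.
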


Denoting by $\alpha=(\alpha_1,\ldots,\alpha_d)\in\BbbN^d$ a multi-index, we can differentiate \eqref{Z-mu} over $\mu$ to get
\begin{equation} \label{DZd-mu}
    \D^\alpha Z(\mu) = \frac{1}{\omega_d} \int_{\BbbS^{d-1}} m_1^{2\alpha_1}\cdots m_d^{2\alpha_d} \e^{\sum_j \mu_j m_j^2}\d m,
\end{equation}
where $|\alpha|=\alpha_1+\cdots+\alpha_d$, $\D^\alpha = \frac{\pt^{|\alpha|}}{\pt \mu_1^{\alpha_1}\cdots \pt \mu_d^{\alpha_d}}$.
That is, the derivatives represent the (not yet normalized) even-order moments of $\mathrm{Bing}(\mu)$.
Similar to \eqref{Zd-is-inv-lt}, we can express them as inverse Laplace transforms.
\begin{corollary}
Work under the same assumptions as Theorem \ref{thm:Z-lt}. 
For any multi-index $\alpha\in\BbbN^d$, we have that
\begin{equation} \label{DZd-inv-lt}
    \D^\alpha Z(\mu) = \frac{\Gamma(\frac{d}{2})}{2\pi\ii} \prod_{j=1}^d \frac{(2\alpha_j-1)!!}{2^{\alpha_j}} \int_{\lambda-\ii\infty}^{\lambda+\ii\infty} \e^z \prod_{j=1}^d (z-\mu_j)^{-\frac12-\alpha_j} \d z,
\end{equation}
where $0!!=(-1)!!=1$ by convention.
\end{corollary}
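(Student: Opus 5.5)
The plan is to differentiate the representation \eqref{Zd-is-inv-lt} of Theorem \ref{thm:Z-lt} under the integral sign, repeatedly in each $\mu_j$. Fix $\mu$ and choose $\lambda>\max_j\mu_j$. The line $\Re z=\lambda$ can be kept fixed for all $\mu'$ in a small neighborhood of $\mu$, say $\max_j\mu_j'<\lambda-\delta$. On that line, for each $j$ we have
\[
\partial_{\mu_j}(z-\mu_j)^{-\frac12-k}=\Bigl(\tfrac12+k\Bigr)(z-\mu_j)^{-\frac32-k}.
\]
This holds because $z-\mu_j$ stays in the open right half-plane, away from the branch cut $\BbbR_-$, so the principal power is holomorphic in $\mu_j$ there. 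Iterating $\alpha_j$ times produces the factor
\[
\prod_{k=0}^{\alpha_j-1}\Bigl(\tfrac12+k\Bigr)=\frac{(2\alpha_j-1)!!}{2^{\alpha_j}}.
\]
The convention $(-1)!!=1$ covers $\alpha_j=0$. Multiplying these factors over $j$ gives exactly the constant and the exponents $-\frac12-\alpha_j$ claimed in \eqref{DZd-inv-lt}. Since $Z$ is smooth and the left-hand side of \eqref{DZd-mu} is $\D^\alpha Z$, it only remains to justify exchanging derivative and integral.

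The justification is the main point, and it is delicate only for the case $\alpha=0$ itself. On the line $z=\lambda+\ii y$ we have $|\e^z|=\e^\lambda$ and $|z-\mu_j'|\ge \max(\delta,|y|/2)$ for $|y|$ large, uniformly in the neighborhood. The integrand $\e^z\prod_j(z-\mu_j)^{-\frac12-\beta_j}$ is therefore bounded by $C(1+|y|)^{-d/2-|\beta|}$. This is integrable when $d/2+|\beta|>1$. Hence for every nonzero multi-index $\beta$, and for $d\ge3$ even at $\beta=0$, dominated convergence justifies differentiating under the integral.

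For the critical cases $d\le2$ with $\beta=0$, the integral in \eqref{Zd-is-inv-lt} converges only conditionally, so I would not differentiate on the line directly. Instead I would first deform the contour to a Hankel-type contour. This contour starts at $-\infty$ below the real axis, encircles all $\mu_j$ to the right, and returns to $-\infty$ above the axis, with its left part lying in $\Re z<\min_j\mu_j-1$. The deformation is legitimate by Cauchy's theorem together with Jordan's lemma, since $|\prod_j(z-\mu_j)^{-1/2}|\to0$ as $|z|\to\infty$ and the factor $\e^z$ decays in the left half-plane. On the Hankel contour the integrand decays like $\e^{\Re z}$, so all $\mu$-derivatives are dominated uniformly for $\mu$ in a compact set. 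Differentiating there and then deforming back to the vertical line gives the same formula. The line integrals for $\alpha\ne0$ are absolutely convergent, so deforming back causes no trouble.

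An alternative for the conditional case is to interpret the line integral as the limit of $\int_{\lambda-\ii T}^{\lambda+\ii T}$, integrate by parts once in $z$ to gain a factor $|z|^{-1}$, and then differentiate. I expect the contour deformation to be cleaner, and it is the route I would take.
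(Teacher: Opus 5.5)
Your proof is correct, but it is organized differently from the paper's.

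\textbf{The paper's route.} The paper never differentiates on the vertical line. In Theorem \ref{thm:DZ-keyhole-int} it first moves to a Hankel-type contour. For $d\ge 3$ this uses the $I_1$--$I_4$ polynomial-decay estimate. For $d=1,2$ it instead identifies the Hankel integral directly with Hankel's formula for $1/\Gamma$ and with $\e^{a}\mathrm{I}_0(b)$. It then differentiates under the Hankel integral, using the exponential decay of $\e^z$. The identification with the line integral in \eqref{DZd-inv-lt} for $\alpha\neq 0$ is left implicit; it follows from the same $I_1$--$I_4$ estimate, which now works in every dimension because the integrand is $O(|z|^{-d/2-|\alpha|})$.

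\textbf{Your route.} You observe that polynomial decay already gives dominated convergence on the line whenever the undifferentiated integrand is absolutely integrable. So the only step needing care is the first derivative when $d\le 2$. There you deform the conditionally convergent line integral, read as a symmetric limit, using Jordan's lemma. This argument is dimension-independent and avoids the special-function checks.

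\textbf{A simplification.} Even that critical step needs no contour deformation. By Theorem \ref{thm:Z-lt}, the truncated integrals $\int_{\lambda-\ii T}^{\lambda+\ii T}$ converge pointwise to a constant multiple of $Z$. Their $\mu_j$-derivatives converge uniformly for $\mu$ near a fixed point, because the differentiated integrand is dominated by $C(1+|y|)^{-d/2-1}$, which is integrable for all $d\ge 1$. The standard theorem on uniformly convergent derivatives then gives the formula directly.

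\textbf{What each approach buys.} Your route is the more economical proof of this corollary taken in isolation. The paper's detour through the Hankel contour yields the representations \eqref{Zd-hankel}--\eqref{DZd-hankel}, which it needs anyway for all of the subsequent asymptotic analysis.
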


Formally, \eqref{DZd-inv-lt} takes the derivatives with respect to $\mu$ under the integration in \eqref{Zd-is-inv-lt}, but its validity requires the convergence of the original integral. In fact, the convergence rate of the integrals in \eqref{Zd-is-inv-lt} and \eqref{DZd-inv-lt} are only of polynomial order, and \eqref{Zd-is-inv-lt} is not absolutely convergent if $d\le 2$.

To justify the integration formulas above and their differentiation, we need to deform the contour of integration.
We introduce the following well-known contour wrapped around the negative real axis, known as Hankel's loop \cite{abramowitz_handbook_2013}.

\begin{definition}
A piecewise $C^1$ curve $C: (-\infty,\infty)\to \BbbC$ is called a \textbf{(general) Hankel-type contour} if:
\begin{enumerate}[(a)]
\item There exists a polynomial $P$ such that $|\Im C(t)| \le P(\Re C(t))$ for all $t\in\BbbR$;
\item There exists $M>0$ such that $\Im C(t)>0$ when $t>M$, and $\Im C(t)<0$ when $t<-M$;
\item $\Re C(t) \to -\infty$ as $t\to\pm\infty$;
\item $\dist(C(t), \BbbR_-)>0, \forall t$.
\end{enumerate}
Geometrically, a Hankel-type contour runs from $-\infty$ below the negative real axis, encircles the origin counterclockwise, and returns to $-\infty$ above the axis; its imaginary part grows at most polynomially.
\end{definition}

Integration along the Hankel-type contours has the following important property. The proof requires only elementary complex analysis.

\begin{proposition} \label{prop:hankel-int}
Let $g:\BbbC\setminus (-\infty,0]\to\BbbC$ be analytic.
If $g(z)$ has a polynomial growth, i.e.~$|g(z)| \le L|z|^p$ for some constant $p\ge 0$ as $z\to\infty$, then the integral of $G(z)=\e^z g(z)$ over any Hankel-type contour is the same.
\end{proposition}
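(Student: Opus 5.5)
The plan is to compare two Hankel-type contours $C_1,C_2$ by Cauchy's theorem on a truncated region, and to show that the connecting pieces vanish in the limit. Fix a large $R>0$. Since $\Re C_k(t)\to-\infty$ as $t\to\pm\infty$ by (c), and by (b) the tails lie strictly above (for $t>M$) or strictly below (for $t<-M$) the real axis, each $C_k$ eventually crosses every vertical line $\Re z=-R$ in its upper and lower tails. For $R$ large, let $t_k^{+}(R)$ be the last parameter with $\Re C_k(t)=-R$ in the upper tail, and $t_k^{-}(R)$ the first in the lower tail. Connect $C_1(t_1^{\pm})$ to $C_2(t_2^{\pm})$ by vertical segments $\sigma^\pm_R$ on the line $\Re z=-R$. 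By (b), for large $R$ these segments stay in the open upper, respectively lower, half-plane, so they avoid the cut $(-\infty,0]$.

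The truncated arcs $C_1|_{[t_1^-,t_1^+]}$ and $C_2|_{[t_2^-,t_2^+]}$, together with $\sigma_R^\pm$, form a closed piecewise $C^1$ curve in $\BbbC\setminus(-\infty,0]$. This domain is simply connected, so by Cauchy's theorem the integral of the analytic function $G$ over this closed curve vanishes. (Cauchy's theorem on a simply connected domain applies to closed curves that are not simple, so no Jordan-curve argument is needed.) Hence
\[
\int_{C_1|_{[t_1^-,t_1^+]}} G - \int_{C_2|_{[t_2^-,t_2^+]}} G = \pm\int_{\sigma_R^+}G \mp \int_{\sigma_R^-}G .
\]

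Next we bound the vertical pieces. By (a), $|\Im C_k(t)|\le P(-R)$ at the crossing points, so $\sigma_R^\pm$ has length at most $2|P(-R)|$. On it, $|z|\le R+|P(-R)|$, which is polynomial in $R$. Therefore
\[
\Bigl|\int_{\sigma_R^\pm} G\Bigr| \le 2|P(-R)|\, L\,(R+|P(-R)|)^p\, \e^{-R}\longrightarrow 0 .
\]

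The remaining step, which I expect to be the main obstacle, is to show that the tail integrals $\int_{C_k,\,t>t_k^+}G$ and $\int_{C_k,\,t<t_k^-}G$ tend to zero. This is really the question of whether $\int_{C_k}G$ converges at all. On the tails, $|G|\le L|z|^p\e^{\Re z}$ with $|z|\le |\Re z|+P(\Re z)$, which decays exponentially in $|\Re z|$. However, this gives integrability only if the arclength is controlled against $\Re z$, and a general piecewise $C^1$ curve might oscillate in $\Re z$. I would handle this by interpreting the contour integral as an improper integral (the limit of truncations at the crossing times, which is the natural definition here), or by assuming, as is implicit in the definition, that the tails are eventually monotone in $\Re z$ with length element comparable to $\d(\Re z)\,(1+|P'|)$. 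Under that reading, each tail is bounded by $\int_{-\infty}^{-R}\mathrm{poly}(x)\,\e^{x}\,\d x\to0$. Letting $R\to\infty$ then gives $\int_{C_1}G=\int_{C_2}G$.
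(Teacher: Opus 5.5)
Your argument is essentially the paper's: join the truncated contours by vertical segments on $\Re z=-R$, apply Cauchy's theorem in the simply connected slit plane, and kill the connectors with a bound of the form $\mathrm{poly}(R)\,\e^{-R}$ coming from (a) and the growth of $g$. The differences are that the paper compares every contour with one fixed keyhole $C_0$ rather than two arbitrary ones, and that its sketch silently skips the tail issue you raise. That issue needs no monotonicity assumption, which the definition does not imply: for $t,t'>M$ the arc $C|_{[t,t']}$ lies in the open upper half-plane, where $G$ has the primitive $\mathcal{G}(w)=\int_{-\infty}^{0}G(w+s)\,\d s$, so $\int_{C|_{[t,t']}}G=\mathcal{G}(C(t'))-\mathcal{G}(C(t))$ however the curve oscillates, and $|\mathcal{G}(w)|\le L\int_{-\infty}^{\Re w}(|x|+|P(\Re w)|)^p\e^{x}\,\d x\to0$ along the tail by (a), which gives both convergence of the improper integral and vanishing of the tails (likewise below the axis).
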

\begin{proof}[Sketch of proof]
We compare an arbitrary contour $C$ with a standard keyhole contour $C_0$ by connecting their truncated endpoints with vertical segments (see Figure \ref{fig:any-hankel}). By Cauchy's theorem, integration of $G(z)$ over any closed loop is zero since the slit plane is simply connected, so the difference of the two truncated integrals equals the integrals over these segments. 
Since $|g(z)|\leq L|z|^p$ and the segment lengths grow at most polynomially in $\Re z$, each connecting integral is bounded by a polynomial factor times $e^{\Re z}$, and tends to zero as the endpoints tend
to $-\infty$. Therefore, the integrals over $C$ and $C_0$ coincide.


\begin{figure}
\centering
\begin{tikzpicture}[decoration={markings, 
    mark=at position 0.6 with {\arrow{stealth}}}, line cap=round]
\draw[gray,->](-3,0)--(3,0);
\draw[line width=2pt,opacity=0.5](-3,0)--(0,0);
\draw[gray,->](0,-3)--(0,3);
\fill(0,0) circle[radius=2pt];
\draw[red,thick,postaction={decorate}] (-3,-.2)--(0,-.2);
\draw[red,thick,postaction={decorate}] (0,-.2) arc[radius=.2,start angle=-90,end angle=90];
\draw[red,thick,postaction={decorate}] (0,.2)--(-3,.2) node[anchor=south east]{$C_0$};
\draw[thick,postaction={decorate}] (-3,-3) ..controls +(3,.2)
and +(1,.7) .. (-1,-2)..controls +(-1,-.7) 
and +(-.1,-2) .. (.5,0) .. controls +(.1,2)
and +(1,.8) .. (-1.5,2.2) .. controls +(-1,-.8)
and +(1,-2) .. (-3,3) node[anchor=east]{$C$};
\draw[dashed] (-1,-2) -- (-1,-.2);
\draw[dashed] (-1.5,2.2) -- (-1.5,.2);
\draw (-1,-1.1) node[anchor=east]{$I_1$};
\draw (-1.5,1.2) node[anchor=east]{$I_2$};
\end{tikzpicture}
\caption{Comparison between an arbitrary Hankel-type contour $C$ and the standard ``keyhole'' $C_0$. $C_0$ coincides with the horizontal lines $\Im z=\pm\delta$ respectively when $t\to\pm\infty$, and circumvents the origin on a circular arc. } \label{fig:any-hankel}
\end{figure}

\end{proof}
\begin{remark}
The function $g$ in Proposition \ref{prop:hankel-int} may have finitely many singularities along the cut $(-\infty,0]$, but the order estimate must be uniform for $|z|$ sufficiently large. In particular, the integrands of \eqref{Zd-is-inv-lt} and \eqref{DZd-inv-lt} satisfy these conditions.
\end{remark}

\begin{figure}
\centering
\begin{tikzpicture}[decoration={markings, 
    mark=at position 0.6 with {\arrow{stealth}}}, line cap=round]
\draw[gray,->](-3,0)--(3,0);
\draw[line width=2pt,opacity=0.5](-3,0)--(0,0);
\draw[gray,->](0,-3)--(0,3);
\fill(0,0) circle[radius=2pt];
\draw[red,thick,postaction={decorate}] (-3,-.2)--({-.2*sqrt(3)},-.2);
\draw[red,thick,postaction={decorate}] ({-.2*sqrt(3)},-.2) arc[radius=.4,start angle=-150,end angle=150];
\draw[red,thick,postaction={decorate}] ({-.2*sqrt(3)},.2)--(-3,.2);
\draw[thick,postaction={decorate}] (1,-3)--(1,3);
\draw[dashed,postaction={decorate}] (0,2.5) -- (1,2.5)node[anchor=west]{$\lambda+(M+\delta)\ii$} ;
\draw (.5,2.8) node{$I_4$};
\draw[dashed,postaction={decorate}] (-2.5,.2)node[anchor=south east]{$-M+\delta\ii$} .. controls +(0,1.5) and +(-1.5,0) .. (0,2.5);
\draw (-2.1,2.1) node{$I_3$};
\draw[dashed,postaction={decorate}] (1,-2.5)node[anchor=west]{$\lambda-(M+\delta)\ii$} -- (0,-2.5);
\draw (.5,-2.8) node{$I_1$};
\draw[dashed,postaction={decorate}] (0,-2.5) .. controls +(-1.5,0) and +(0,-1.5) .. (-2.5,-.2) node[anchor=north east]{$-M-\delta\ii$};
\draw (-2.1,-2.1) node{$I_2$};
\end{tikzpicture}
\caption{Hankel-type, or ``keyhole'' contour (red), and its relation with the contour of inverse Laplace transform (black).} \label{fig:keyhole}
\end{figure}

Then, we present an important observation that bends the contours of \eqref{Zd-is-inv-lt} and \eqref{DZd-inv-lt}.
\begin{theorem} \label{thm:DZ-keyhole-int}
For any $\mu\in\BbbR^d$ with $0=\mu_1\ge\mu_2\ge\cdots\ge\mu_d$, the path of integration in \eqref{Zd-is-inv-lt} and \eqref{DZd-inv-lt} can be changed to a Hankel-type contour $C$, that is:
\begin{align}
    Z(\mu) &= \frac{\Gamma(\frac{d}{2})}{2 \pi\ii} \oint_C \e^z \prod_{j=1}^d (z-\mu_j)^{-\frac12} \d z, \label{Zd-hankel} \\
    \D^\alpha Z(\mu) &= \frac{\Gamma(\frac{d}{2})}{2 \pi\ii} \prod_{j=1}^d \frac{(2\alpha_j-1)!!}{2^{\alpha_j}} \oint_C \e^z \prod_{j=1}^d (z-\mu_j)^{-\frac12-\alpha_j} \d z. \label{DZd-hankel}
\end{align}
\end{theorem}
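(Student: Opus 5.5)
Since $\mu_1=0$ and all other $\mu_j\le 0$, the integrand
\[G_\alpha(z)=\e^z\prod_{j=1}^d (z-\mu_j)^{-\frac12-\alpha_j}\]
is analytic in $\BbbC\setminus(-\infty,0]$. Indeed, each factor $(z-\mu_j)^{s}$ with the principal branch has its cut on $(-\infty,\mu_j]\subset(-\infty,0]$. Moreover $g_\alpha(z)=\prod_j(z-\mu_j)^{-\frac12-\alpha_j}$ satisfies $|g_\alpha(z)|\le L|z|^{0}$ for $|z|$ large, since all exponents have negative real part. So Proposition \ref{prop:hankel-int} applies: the integral over any Hankel-type contour is the same. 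It therefore suffices to show that the vertical-line integral in \eqref{Zd-is-inv-lt} and \eqref{DZd-inv-lt}, with $\lambda>0$, equals the integral over one particular Hankel-type contour. I take the keyhole $C_0$ of Figure \ref{fig:keyhole}: the horizontal rays $\Im z=\pm\delta$, $\Re z\le -M'$ for a small fixed $\delta$, joined by an arc around the origin that passes to the right of $0$.

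The plan follows Figure \ref{fig:keyhole}. Truncate the vertical line at heights $\pm T$ and the keyhole at $\Re z=-R$. Close the region between them with the horizontal segments $I_1$, $I_4$ at $\Im z=\pm T$ running from $\Re z=\lambda$ to $\Re z=-R$, together with the short vertical pieces at $\Re z=-R$ joining $-R\pm\ii T$ to $-R\pm\ii\delta$. (In the figure, $I_2$ and $I_3$ play this role.) The enclosed region avoids the cut, so Cauchy's theorem gives that the two truncated integrals differ by the integrals over these connecting pieces. It remains to show that the connecting pieces vanish as $T,R\to\infty$.

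\textbf{Left pieces} ($\Re z=-R$). Here $|G_\alpha|\le \e^{-R}\,L$ uniformly in $\Im z$, since $|z-\mu_j|\ge\delta$ or $|z-\mu_j|\ge R-|\mu_d|$ keeps $|g_\alpha|$ bounded. The length is at most $2T$, so the contribution is $O(T\e^{-R})$. I choose $R=R(T)=T$, or any $R\gg\ln T$, so that this tends to zero.

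\textbf{Horizontal pieces} ($\Im z=\pm T$, $-R\le\Re z\le\lambda$). This is the main obstacle, because for $d\le 2$ and $\alpha=0$ the decay $|g_\alpha|\sim|z|^{-d/2}$ is weak. I split the segment at $\Re z=0$. On $[-R,0]$ the bound $|g_\alpha(z)|\le C T^{-d/2-|\alpha|}$ gives
\[\int_{-R}^{0}\e^{x}\,C T^{-d/2-|\alpha|}\d x\le C T^{-d/2-|\alpha|}\to0.\]
On $[0,\lambda]$ the segment has fixed length, $\e^{x}\le\e^\lambda$, and $|g_\alpha|\le CT^{-d/2-|\alpha|}$, so this part also tends to zero. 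Thus any negative power of $T$ suffices, because the exponential factor makes the $x$-integral bounded independently of $R$.

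Letting $T\to\infty$ with $R=T$, the truncated keyhole integral converges absolutely to the full keyhole integral, thanks to $\e^{\Re z}$ decay along the rays. Hence the truncated vertical integrals converge, and their limit, which is the (possibly only conditionally convergent) integral in \eqref{Zd-is-inv-lt} and \eqref{DZd-inv-lt}, equals $\oint_{C_0}$. Proposition \ref{prop:hankel-int} then extends this equality to an arbitrary Hankel-type $C$, giving \eqref{Zd-hankel} and \eqref{DZd-hankel}. The constant prefactors are simply carried over from Theorem \ref{thm:Z-lt} and its corollary.

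A remark on \eqref{DZd-hankel}: the corollary presumes differentiation under the integral. If its validity is itself in question, I would instead derive \eqref{DZd-hankel} directly from \eqref{Zd-hankel}. On $C_0$, with $\mu$ confined to a compact set whose branch points stay inside the keyhole, the integrand and its $\mu$-derivatives are dominated by $C\e^{\Re z}$, which is integrable. So differentiation under the integral is legitimate. The identity $\pt_{\mu_j}(z-\mu_j)^{-\frac12-k}=(\frac12+k)(z-\mu_j)^{-\frac32-k}$ then produces the factors $(2\alpha_j-1)!!/2^{\alpha_j}$.
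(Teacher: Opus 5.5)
Your proof is correct, and for the key estimate it takes a different route from the paper.

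\textbf{The paper's route.} The paper joins the truncated Bromwich line to the keyhole with short horizontal pieces on $0\le\Re z\le\lambda$ and quarter circles of radius $M$ in the left half-plane. On those arcs it uses only $|\e^z|\le 1$, so the bound $\frac{\pi}{2}M\cdot M^{-d/2}$ requires $d\ge 3$. It therefore treats $d=1$ separately through Hankel's formula \eqref{gamma-hankel}. It treats $d=2$ by closing the contour and summing the Laurent series to recover $\e^{a}\mathrm{I}_0(b)$. The formula \eqref{DZd-hankel} for general $\alpha$ is then obtained by differentiating \eqref{Zd-hankel} under the integral.

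\textbf{Your route.} Your horizontal connectors at $\Im z=\pm T$ run all the way to $\Re z=-R$. There $|z-\mu_j|\ge T$ and the factor $\e^{\Re z}$ makes the $x$-integral bounded, so each contributes at most $\e^{\lambda}T^{-d/2-|\alpha|}$, independent of $R$. The far-left vertical pieces are bounded by $T\e^{-R}\delta^{-d/2-|\alpha|}$, which vanishes once $R=T$. This Jordan-lemma-type argument covers every $d\ge 1$ and every $\alpha$ in one stroke and needs no special functions. It also shows directly that the Bromwich integral converges as a limit of truncations when it is not absolutely convergent ($d\le 2$).

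\textbf{What the paper's detour buys.} The $d=2$ computation is not needed for the theorem. It does supply the identification $Z^{(2)}=\e^{a}\mathrm{I}_0(b)$ used in the later Bessel example. It also introduces the branch-cut cancellation and closed-loop idea that reappears in Corollary \ref{cor:Zd-real-int} and Theorem \ref{thm:uni-conv-even}.
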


\begin{proof}
It suffices to check \eqref{Zd-hankel}, as \eqref{DZd-hankel} follows directly from \eqref{Zd-hankel} by differentiation under the integrand, which is correct due to the analyticity of the integrand with respect to $\mu$ and $z$ as well as the exponential convergence of $\e^z$ as $\Re z\to -\infty.$ 

Assume for the moment that $d\ge 3$, and the integral \eqref{Zd-is-inv-lt} is absolutely convergent because the decay rate of the integrand $F(z)=\e^z \prod_j (z-\mu_j)^{-\frac12}$ is controlled by $|z|^{-\frac32}$.

Since the specific shape of the Hankel-type contour is irrelevant by Proposition \ref{prop:hankel-int}, we can choose a special contour consisting of two rays parallel to the real axis (with distance $\delta>0$) and a small circular arc around the origin, and choose an arbitrary $\lambda>0$ in \eqref{Zd-is-inv-lt}. 
Then, for given $M>0$, the finite truncations of the line integral (onto an interval $[-M-\delta,M+\delta]$) and the Hankel-type integral (onto the part with $\Re C(t)>-M$) differ by the combination of the following path integrals:
\begin{align*}
    I_1 = \int_{\lambda-(M+\delta)\ii}^{-(M+\delta)\ii} F(z)\d z,&\ 
I_2 = \int_{-(M+\delta)\ii}^{-M-\delta\ii} F(z)\d z,\\
I_3 = \int_{-M+\delta\ii}^{(M+\delta)\ii} F(z)\d z,&\
I_4 = \int_{(M+\delta)\ii}^{\lambda+(M+\delta)\ii} F(z)\d z.
\end{align*}
They are depicted in Figure \ref{fig:keyhole}. 

Since the line segments for $I_1,I_4$ have finite length and $F(z)\to 0$ as $|z|\to\infty$, we have that
\[\lim_{M\to\infty} |I_1| = \lim_{M\to\infty} |I_4| = 0.\]
Since $F(z)$ is analytic on the slit plane, the specific shape of the path of $I_3$ does not matter so long as it connects $(M+\delta)\ii$ and $-M+\delta\ii$. We choose its path to be a quarter circle with radius $M$, centred at $+\delta\ii$. Then, we estimate that
\[|I_3| \le \frac{\pi}{2}M \cdot \sup_{|z|\ge M-\delta} \prod_{j=1}^d |z-\mu_j|^{-\frac12} \lesssim M^{-\frac{d-2}{2}}\to 0. \]
The estimate for $|I_2|$ follows similarly. Combining the estimates above shows that
\[\int_{\lambda-\ii\infty}^{\lambda+\ii\infty} F(z)\d z = \oint_C F(z)\d z,\]
proving our claim for $d\ge 3$.

For $d=1$, the fact is trivial. First, $Z(\mu_1)=\e^{\mu_1}$ by the definition \eqref{Z-mu}, and by the gauge condition $\mu_1=0$ we have that $Z\equiv 1$. Then, the result \eqref{Zd-hankel} is equivalent to Hankel's formula for the $\Gamma$ function \cite{abramowitz_handbook_2013}:
\begin{equation}\label{gamma-hankel}
    \frac{1}{\Gamma(s)} = \frac{1}{2\pi\ii}\int_{C} z^{-s}\e^z\d z,\ \forall s\in\BbbC.
\end{equation}
Formulas for the derivatives follow similarly.

For $d=2$, we directly verify the identity
\begin{equation} \label{Zd-int-2d}
    Z(\mu_1,\mu_2) = \frac{1}{2\pi\ii} \oint_C \frac{\e^z}{\sqrt{(z-\mu_1)(z-\mu_2)}}\d z.
\end{equation}
We first notice that $Z(\mu)$ as defined by \eqref{Z-mu} is actually the modified Bessel function of the first kind \cite{abramowitz_handbook_2013}. Parametrizing $\BbbS^1$ by $m=[\cos\theta,\sin\theta]^T, \theta\in[0,2\pi)$, we get that
\begin{align*}
    Z(\mu) &= \frac{1}{2\pi} \int_0^{2\pi} \exp(\mu_1\cos^2\theta+\mu_2\sin^2\theta)\d\theta \\
    &= \e^{\frac{\mu_1+\mu_2}{2}} \frac{1}{2\pi}\int_0^{2\pi} \exp\left( \frac{\mu_1-\mu_2}{2}\cos 2\theta \right)\d \theta \\
    &= \e^{\frac{\mu_1+\mu_2}{2}} \frac{1}{2\pi}\int_0^{2\pi} \exp\left( \frac{\mu_1-\mu_2}{2}\cos t \right)\d t
    = \e^{\frac{\mu_1+\mu_2}{2}} \mathrm{I}_0\left( \frac{\mu_1-\mu_2}{2} \right).
\end{align*}
We also notice that the integrand $F(z)=\frac{\e^z}{\sqrt{(z-\mu_1)(z-\mu_2)}}$ is now analytic on the ray $(-\infty, \mu_2)$, since \emph{according to our square root convention, the branch cuts of $(z-\mu_1)^{-\frac12}$ and $(z-\mu_2)^{-\frac12}$ cancel out.} Therefore, we can actually convert the Hankel-type contour $C$ into a regular closed loop $C'$ (as shown in Figure \ref{fig:closed-contour}) without affecting the value of the integral.
Denoting by $a=\frac{\mu_1+\mu_2}{2}, b= \frac{\mu_1-\mu_2}{2}$, we rewrite the RHS of \eqref{Zd-int-2d} as
\[ \text{RHS} = \frac{1}{2\pi\ii}\oint_{C'} \frac{\e^z}{\sqrt{(z-a-b)(z-a+b)}}\d z= \frac{\e^a}{2\pi\ii}\oint_{|z|=R} \frac{\e^{z}}{\sqrt{z^2-b^2}}\d z,\]
where the loop $C'$ is further translated to the right by $a$, and deformed into a circle of radius $R>b$. We have also safely merged the two square roots in the denominator because the function is now a well-defined analytic function on the complex plane excluding the segment $(-b,b)$.
Expanding $\e^z$ into Taylor series, and using the Laurent series of $\frac{1}{\sqrt{z^2-b^2}}$ at $\infty$:
\[\frac{1}{\sqrt{z^2-b^2}} = \frac{1}{z} \qty(1-\frac{b^2}{z^2})^{-\frac12} = \sum_{k=0}^\infty \frac{(2k-1)!!}{2^k k!}\frac{b^{2k}}{z^{2k+1}},\ |z|>b,\]
we get
\begin{align*}
    \text{RHS} &= \e^a \sum_{k=0}^\infty \frac{1}{2\pi\ii} \frac{1}{k!} \oint_{|z|=R} \frac{z^k}{\sqrt{z^2-b^2}}\d z \\
    &=\e^a\sum_{k=0}^\infty \frac{b^{2k}}{4^k (k!)^2} = \e^a\mathrm{I}_0(b)
\end{align*}
by \cite{abramowitz_handbook_2013}, which is identical to the LHS.
\begin{figure}
\centering
\begin{tikzpicture}[decoration={markings, 
    mark=at position 0.6 with {\arrow{stealth}}}, line cap=round]
\draw[gray,->](-3,0)--(3,0);
\draw[gray,->](0,-1.2)--(0,1.2);
\draw[line width=2pt,opacity=0.5](-1,0)--(0,0);
\fill(0,0) circle[radius=2pt];
\fill(-1,0) circle[radius=2pt];
\draw[red,thick,dashed] (-3,-.2) .. controls +(.5,0) and +(-.5,.1) .. (-1.5,-.3);
\draw[red,thick,postaction={decorate}] (-1.5,-.3) .. controls +(.5,-.1) and +(-1,0) .. (0,-.8) .. controls +(.5,0) and +(0,-.5) .. (1,0);
\draw[red,thick,postaction={decorate}] (1,0) .. controls +(0,.5) and +(.5,0) .. (0,.8).. controls +(-1,0) and +(.5,.1) .. (-1.5,.3);
\draw[red,thick,postaction={decorate}] (-1.5,.3) -- (-1.5,-.3);
\draw[red,thick,dashed] (-1.5,.3).. controls +(-.5,-.1) and +(.5,0) .. (-3,.2);
\end{tikzpicture}
\caption{Closing the Hankel-type contour when $d=2$} \label{fig:closed-contour}
\end{figure}
\end{proof}

The argument in the proof of Theorem \ref{thm:DZ-keyhole-int} for $d=2$ indicates that the contour of integration can also be simplified in general dimension $d$.
In fact, the integrand $F(z)=\e^z \prod_j (z-\mu_j)^{-\frac12}$ is analytic at $x\in \BbbR_-$ as long as there are an even number of negative square roots $\sqrt{x-\mu_j}$ with $x<\mu_j$, so that their branch cuts cancel out.
That is, $F(z)$ is analytic on the line segment $(\mu_{2k+1},\mu_{2k})$. The Hankel-type contour $C$ can then be split across these segments to form separate loops (see Figure \ref{fig:split-contour}).
\begin{figure}
\centering
\begin{tikzpicture}[decoration={markings, 
    mark=at position 0.55 with {\arrow{stealth}}}, line cap=round]
\draw[gray,->](-5,0)--(2,0);
\draw[gray,->](0,-1.5)--(0,1.5);
\draw[line width=2pt,opacity=0.5](-1,0)--(0,0);
\draw[line width=2pt,opacity=0.5](-3,0)--(-2,0);
\draw[line width=2pt,opacity=0.5](-5,0)--(-4,0);
\fill(0,0) circle[radius=2pt];
\fill(-1,0) circle[radius=2pt];
\fill(-2,0) circle[radius=2pt];
\fill(-3,0) circle[radius=2pt];
\fill(-4,0) circle[radius=2pt];
\draw[red,thick,postaction=decorate] (-5,-1)--(0,-1);
\draw[red,thick,postaction=decorate] (0,-1) arc[radius=1,start angle =-90,end angle=90];
\draw[red,thick,postaction=decorate] (0,1)--(-5,1);
\draw[red] (1,0) node[anchor=south west]{$C$};
\draw[blue,thick,postaction=decorate] (-1,-.3)--(0,-.3);
\draw[blue,thick,postaction=decorate] (0,-.3) arc[radius=0.3,start angle =-90,end angle=90];
\draw[blue,thick,postaction=decorate] (0,.3)--(-1,.3);
\draw[blue,thick,postaction=decorate] (-1,.3) arc[radius=0.3,start angle =90,end angle=270];
\draw[blue,thick,postaction=decorate] (-3,-.3)--(-2,-.3);
\draw[blue,thick,postaction=decorate] (-2,-.3) arc[radius=0.3,start angle =-90,end angle=90];
\draw[blue,thick,postaction=decorate] (-2,.3)--(-3,.3);
\draw[blue,thick,postaction=decorate] (-3,.3) arc[radius=0.3,start angle =90,end angle=270];
\draw[blue,thick,postaction=decorate] (-5,-.3)--(-4,-.3);
\draw[blue,thick,postaction=decorate] (-4,-.3) arc[radius=0.3,start angle =-90,end angle=90];
\draw[blue,thick,postaction=decorate] (-4,.3)--(-5,.3);
\draw[thick,dashed] (-1,-1)--(-1,-.3);
\draw[thick,dashed] (-1,1)--(-1,.3);
\draw[thick,dashed] (-2,-1)--(-2,-.3);
\draw[thick,dashed] (-2,1)--(-2,.3);
\draw[thick,dashed] (-3,-1)--(-3,-.3);
\draw[thick,dashed] (-3,1)--(-3,.3);
\draw[thick,dashed] (-4,-1)--(-4,-.3);
\draw[thick,dashed] (-4,1)--(-4,.3);
\end{tikzpicture}
\caption{Splitting the Hankel-type contour $C$ (in red) across the segments where $F(z)=\e^z \prod_j (z-\mu_j)^{-1/2}$ is analytic (the non-shaded ones) and forming small loops around every other segment (in blue). For even $d$, all such loops are finite; for odd $d$, the left-most loop is still infinite.}\label{fig:split-contour}
\end{figure}
By shrinking the loops further towards the real axis, we can reduce it to a real integral, as shown in the following corollary.
\begin{corollary} \label{cor:Zd-real-int}
Work under the assumptions of Theorem \ref{thm:DZ-keyhole-int}. If we assume further that the eigenvalues are strictly separated, i.e.
\[0=\mu_1>\mu_2>\cdots>\mu_d,\]
then
\begin{equation} \label{Zd-real-int}
    Z(\mu) = \frac{\Gamma(\frac{d}{2})}{\pi} \sum_{k=1}^{\lceil d/2 \rceil}(-1)^{k-1} \int_{\mu_{2k}}^{\mu_{2k-1}} \frac{\e^x}{\sqrt{\prod_{j=1}^d |x-\mu_j|}}\d x,
\end{equation}
where $\mu_{d+1}\triangleq -\infty$.
\end{corollary}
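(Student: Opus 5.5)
Starting from \eqref{Zd-hankel} in Theorem~\ref{thm:DZ-keyhole-int}, I will deform the Hankel-type contour $C$ as in Figure~\ref{fig:split-contour} and then collapse each loop onto the real axis.

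The first step is to locate the branch cuts of $F(z)=\e^z\prod_j (z-\mu_j)^{-1/2}$. For real $x$ in $(\mu_{k+1},\mu_k)$, exactly $k$ factors have $x-\mu_j<0$. With the principal branch, the boundary values from above and below are $(x-\mu_j)^{-1/2}_\pm=\mp\ii\,|x-\mu_j|^{-1/2}$ for these $j$, and real positive for the others. So
\[F(x\pm\ii0)=(\mp\ii)^k\,\e^x\prod_j|x-\mu_j|^{-1/2}.\]
When $k$ is even the two boundary values agree, $F$ extends continuously across the segment, and by Morera's theorem it is analytic there. When $k$ is odd we get a genuine jump, namely $F(x+\ii0)-F(x-\ii0)=2(-\ii)^k\e^x\prod|x-\mu_j|^{-1/2}$, again with $k=2m-1$. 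The cuts are therefore exactly the segments $(\mu_{2m},\mu_{2m-1})$, $m=1,\dots,\lceil d/2\rceil$, where $\mu_{d+1}=-\infty$ when $d$ is odd.

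The second step splits the contour. By Proposition~\ref{prop:hankel-int} I may take $C$ to be the rectangular keyhole of Figure~\ref{fig:split-contour}. I then join its upper and lower rays by vertical segments at points of the analytic intervals $(\mu_{2m+1},\mu_{2m})$, traversed in both directions so that they cancel. This uses Cauchy's theorem on the regions where $F$ is analytic. The result writes $\oint_C F$ as a sum of closed loops $\gamma_m$, each enclosing one cut $[\mu_{2m},\mu_{2m-1}]$, together with (for odd $d$) one infinite Hankel loop around $(-\infty,\mu_d]$. This infinite loop is legitimate by the same truncation argument as in Proposition~\ref{prop:hankel-int}, since $\e^{\Re z}$ decays.

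The third step collapses each loop onto its cut. The horizontal parts converge to integrals of the boundary values. The small circles around the endpoints $\mu_j$ contribute $O(\varepsilon^{1/2})\to0$, because the strict separation of the $\mu_j$ makes each endpoint singularity only $|z-\mu_j|^{-1/2}$, which is integrable. Each loop is traversed counterclockwise: left to right below the cut, right to left above. Its limit is therefore
\[\int_{\mu_{2m}}^{\mu_{2m-1}}\bigl(F(x-\ii0)-F(x+\ii0)\bigr)\d x=-2(-\ii)^{2m-1}\int_{\mu_{2m}}^{\mu_{2m-1}}\frac{\e^x\,\d x}{\sqrt{\prod_j|x-\mu_j|}}.\]
Since $(-\ii)^{2m-1}=(-1)^{m}\,\ii$, dividing by $2\pi\ii$ and multiplying by $\Gamma(\tfrac d2)$ gives $\frac{\Gamma(d/2)}{\pi}(-1)^{m-1}\int(\cdots)$, which is \eqref{Zd-real-int}.

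For the unbounded cut when $d$ is odd, the collapse needs dominated convergence. On $\Re z\to-\infty$ the integrand is bounded by $\e^x|x|^{-d/2}$, so the limit is justified.

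The main obstacle is bookkeeping rather than analysis: getting the branch signs $(\mp\ii)^k$ and the loop orientations consistent so that the alternating sign $(-1)^{k-1}$ comes out correctly. I would check it against the case $d=1$, where the formula gives $\frac{\Gamma(1/2)}{\pi}\int_{-\infty}^0 \e^x|x|^{-1/2}\d x=1$, and against $d=2$, where it reproduces the Bessel-function integral obtained earlier.
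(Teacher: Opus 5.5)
Your proposal is correct and is essentially the paper's argument: collapse the Hankel contour onto $(-\infty,0]$, use the boundary values $F(x\pm\ii 0)=(\mp\ii)^k\e^x\prod_j|x-\mu_j|^{-1/2}$ on $(\mu_{k+1},\mu_k)$, and keep only the odd-$k$ intervals, with exactly your signs. The one difference is that the paper does not split the contour first: it shrinks the horizontal-ray keyhole as a whole under the dominating function $\e^x\prod_j|x-\mu_j|^{-1/2}\in L^1(-\infty,0)$, so the even intervals cancel pointwise in the limit, and your Morera step, the endpoint circles, and the check you left unstated (that for even $d$ the leftover infinite piece to the left of $\mu_d$ integrates to zero) all become unnecessary.
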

\begin{proof}
Suppose that the Hankel-type contour $C$ consists of two rays parallel to the real axis with distance $\delta$ and a small arc around the origin. Then, we have that
\begin{equation}
    Z(\mu) = \frac{\Gamma(\frac{d}{2})}{2\pi\ii} \int_{-\infty}^0 [F(x-\delta\ii)-F(x+\delta\ii)]\d x + R,
\end{equation}
where $F(z) = \e^z \prod_j (z-\mu_j)^{-\frac12}$ is the integrand, and $R$ denotes the contribution from the small circular arc, which obviously vanishes as $\delta\to 0$.

We shrink $\delta\to 0$, so that the Hankel-type contour $C$ approaches the negative real axis. The limit of the integral can be taken inside the integral sign since both terms in the integrand are dominated by
\[\e^x\prod_{j=1}^d |x-\mu_j|^{-\frac12} \in L^1(-\infty,0).\]
If $x\in (\mu_{2k+1},\mu_{2k})$ (i.e.~there are an even number of eigenvalues larger than $x$), then the limits of $F(z)$ at $x$ from above and below are identical by the definition of the square root, so they cancel out. 
Otherwise, if $x\in(\mu_{2k},\mu_{2k-1})$, then the limits of $F(x\pm\delta\ii)$ are
\[\lim_{\delta\to 0} F(x\pm\delta \ii) = (\mp\ii)^{2k-1}\e^x  \prod_{j=1}^d |x-\mu_j|^{-\frac12}
=\pm (-1)^k \ii \e^x  \prod_{j=1}^d |x-\mu_j|^{-\frac12},\]
as $z^\frac12\to \ii\sqrt{|x|}$ as $z\to x$ above the real axis ($-\ii\sqrt{|x|}$ below the real axis) for any $x<0$. Substituting the limits above, we get that
\begin{align*}
    Z(\mu) &= \frac{\Gamma(\frac{d}{2})}{2\pi\ii} \sum_{k=1}^{\lceil d/2 \rceil} \int_{\mu_{2k}}^{\mu_{2k-1}} (-2\ii) (-1)^k \e^x \prod_{j=1}^d |x-\mu_j|^{-\frac12}\d x \\
    &= \frac{\Gamma(\frac{d}{2})}{\pi} \sum_{k=1}^{\lceil d/2 \rceil} (-1)^{k-1} \int_{\mu_{2k}}^{\mu_{2k-1}} \frac{\e^x }{\sqrt{\prod_{j=1}^d |x-\mu_j|}}\d x
\end{align*}
as desired.
\end{proof}

\section{Asymptotic expansions}

In this section, we discuss the asymptotic properties of $Z$ and its derivatives when $\mu_1=0$ and one or more $\mu_j\to-\infty$, which was first studied by \cite{bingham_distribution_1964,kent_asymptotic_1987} in three dimensions. The integral form obtained in the previous section makes it convenient to obtain asymptotic series with a technique similar to \cite{mcclure_asymptotic_1987}.

As we will observe, the function $Z$ from different dimensions will naturally emerge as coefficients in the expansion, so we will update the notation to $Z^{(d)}(\mu)$ to emphasize the dimensionality of $\mu$.

\subsection{Preliminaries} We begin with a formal derivation of an asymptotic series of $Z^{(d)}(\mu)$ when one eigenvalue $\mu_d\to-\infty$.
In the integral representation of $Z^{(d)}(\mu)$ \eqref{Zd-hankel}, we can reformulate the term $(z-\mu_d)^{-\frac12}$ as follows:
\begin{equation} \label{pow-expand}
    \begin{aligned}
        (z-\mu_d)^{-\frac12} &= (-\mu_d)^{-\frac12} \qty(1+ \frac{z}{-\mu_d})^{-\frac12} \\
        &= \frac{1}{\sqrt{|\mu_d|}} \bigg[1 - \frac12 \frac{z}{-\mu_d} + \frac38 \frac{z^2}{(-\mu_d)^2}+ \cdots \\
        &\hspace{50pt} + \frac{(-1)^n (2n-1)!!}{(2n)!!} \frac{z^n}{(-\mu_d)^n} + O\qty(\frac{1}{|\mu_d|^{n+1}}) \bigg].
    \end{aligned}
\end{equation}
Substituting it in, we find that \small
\begin{align*}
    Z^{(d)} &= \frac{\Gamma(\frac{d}{2})}{2\pi\ii} \oint_C \e^z \prod_{j=1}^{d-1}(z-\mu_j)^{d-1} \frac{1}{\sqrt{|\mu_d|}}\left[ \sum_{k=0}^n \frac{(-1)^k(2k-1)!!}{(2k)!!} \frac{z^k}{(-\mu_d)^k} + O\qty(\frac{1}{|\mu_d|^{n+1}}) \right] \d z\\
    &=\frac{1}{\sqrt{|\mu_d|}}
    \sum_{k=0}^n \frac{(-1)^k(2k-1)!!}{(2k)!! (-\mu_d)^k}
    \frac{\Gamma(\frac{d}{2})}{2\pi\ii} \oint_C z^k\e^z \prod_{j=1}^{d-1}(z-\mu_j)^{d-1} \d z 
    +O\qty(\frac{1}{|\mu_d|^{n+\frac32}}).
\end{align*}\normalsize
The coefficient with $k=0$ resembles the $(d-1)$-dimensional normalizing constant, while the remaining coefficients are closely related contour integrals. This reflects the dimensional hierarchy in Bingham distributions: as $\mu_d\to-\infty$, the weight suppresses $m_d\neq 0$ and the distribution concentrates on the equatorial circle $m_d=0$.



To formulate the expansion and control its remainder, we introduce the auxiliary quantities
\begin{equation} \label{Mdk}
    M^{(d)}_k(\mu_1,\ldots,\mu_d) = \frac{1}{2\pi\ii} \oint_C z^k \e^z \prod_{j=1}^d (z-\mu_j)^{-\frac12} \d z.
\end{equation}
By \eqref{Zd-hankel},
\begin{equation} \label{Zd-is-Md0}
    Z^{(d)}=\Gamma(\tfrac{d}{2})M^{(d)}_0,
\end{equation}
so $M^{(d)}_k$ serves as a generalization of the Bingham normalizing constant $Z^{(d)}$.
In fact, $M^{(d)}_k$ is closely associated with $Z^{(d)}$.

\begin{proposition} \label{prop:Mdk}
Assume that $0=\mu_1\ge\mu_2\ge\cdots\ge\mu_d$.
Then, $M^{(d)}_k$ can be explicitly written as
\begin{equation} \label{Mdk-expr}
    M^{(d)}_k(\mu) = \frac{k!}{2\pi^{\frac{d}{2}}} \sum_{l=0}^k \frac{1}{(k-l)!} \binom{\frac{d}{2} - 1}{l}
    \int_{\BbbS^{d-1}} \e^{\sum_{j=1}^d \mu_j m_j^2} \qty(\sum_{j=1}^d \mu_j m_j^2)^{k-l} \d m,
\end{equation}
where $\binom{\alpha}{n}=\frac{\alpha(\alpha-1)\cdots(\alpha-n+1)}{n!}$ is the generalized binomial coefficient. As a result, all $M^{(d)}_k$ are real numbers.
\end{proposition}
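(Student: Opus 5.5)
The plan is to obtain all $M^{(d)}_k$ at once by differentiating a one-parameter family of Hankel integrals with respect to a scaling parameter. For $s>0$ define
\[
\Phi(s)=\frac{1}{2\pi\ii}\oint_C \e^{sz}\prod_{j=1}^d (z-\mu_j)^{-\frac12}\d z .
\]
Then $\Phi(1)=M^{(d)}_0$. Differentiating $k$ times under the integral sign gives $\Phi^{(k)}(1)=M^{(d)}_k$, since each derivative produces a factor $z$.

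To justify this differentiation, fix $s$ in a neighbourhood $[\tfrac12,2]$ of $1$. Along a Hankel-type contour, $|z^k\e^{sz}|\le |z|^k\e^{\Re z/2}$ on the two tails where $\Re z\to-\infty$. The contour's imaginary part grows at most polynomially in $\Re z$, so the dominated convergence theorem applies uniformly in $s$.

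Next I would evaluate $\Phi(s)$ in closed form by the substitution $z=w/s$. This maps a Hankel-type contour to a Hankel-type contour, and by Proposition \ref{prop:hankel-int} the choice of contour is irrelevant. It gives
\[
\Phi(s)=s^{-1}\cdot\frac{1}{2\pi\ii}\oint_C \e^{w}\prod_{j=1}^d\Bigl(\frac{w-s\mu_j}{s}\Bigr)^{-\frac12}\d w
= s^{\frac d2-1}\,\frac{Z^{(d)}(s\mu)}{\Gamma(\frac d2)} .
\]
The last equality uses Theorem \ref{thm:DZ-keyhole-int}, which is legitimate because $s\mu$ still satisfies $0=s\mu_1\ge s\mu_2\ge\cdots\ge s\mu_d$. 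We also need $(w/s)^{-1/2}$ to factor as $s^{1/2}w^{-1/2}$ for $s>0$; this holds under the principal branch because multiplying by a positive real does not change the argument.

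With this closed form, Leibniz's rule at $s=1$ gives
\[
\frac{\d^l}{\d s^l}s^{\frac d2-1}\Big|_{s=1}=l!\binom{\frac d2-1}{l},
\qquad
\frac{\d^{n}}{\d s^{n}}Z^{(d)}(s\mu)\Big|_{s=1}=\frac1{\omega_d}\int_{\BbbS^{d-1}}\e^{\sum_j\mu_jm_j^2}\Bigl(\sum_j\mu_jm_j^2\Bigr)^{n}\d m .
\]
The second identity follows from the definition \eqref{Z-mu} by differentiating under a compact integral. Combining the factors, $\binom{k}{l}\,l!=\frac{k!}{(k-l)!}$, and
\[
\frac{1}{\Gamma(\frac d2)\,\omega_d}=\frac{1}{2\pi^{d/2}},
\]
which reproduces \eqref{Mdk-expr}. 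Reality of $M^{(d)}_k$ is then immediate, since every term on the right-hand side of \eqref{Mdk-expr} is a real integral.

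The main point of care is the substitution step: checking that the rescaled contour is admissible and that the branch conventions are compatible with scaling. Both are routine for real $s>0$. The rest is bookkeeping of constants.
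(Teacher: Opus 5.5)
Your proof is correct and takes essentially the paper's approach: your $\Phi(s)$ is the paper's generating function $G(t)$ with $s=1+t$. It is evaluated in closed form by the same rescaling of the Hankel contour and Theorem \ref{thm:DZ-keyhole-int}, followed by the same Leibniz bookkeeping. The only difference is that you differentiate under the integral sign in $s$ directly, whereas the paper expands $\e^{tz}$ as a power series and exchanges sum and integral; both justifications are valid.
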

\begin{proof}
Let the contour $C$ go horizontally from $-\infty-\ii$ to $-\ii$, around the origin on a semicircle with radius 1 up to $\ii$, and then back to $-\infty+\ii$ horizontally again (see the outer loop in Figure \ref{fig:split-contour}). 
We consider the moment generating function of $M^{(d)}_k$:
\begin{equation} \label{Mdk-mgf}
    G(t) = \sum_{k=0}^\infty \frac{M^{(d)}_k t^k}{k!}
    =\sum_{k=0}^\infty \frac{1}{2\pi\ii} \oint_C \frac{(tz)^k}{k!} \e^z \prod_{j=1}^d(z-\mu_j)^{-\frac12} \d z
\end{equation}
The sum of the absolute values of the terms in the integrand is bounded by
\[ |\e^z| \cdot \prod_{j=1}^d|z-\mu_j|^{-\frac12} \cdot \sum_{k=0}^\infty\frac{|tz|^k}{k!} = \e^{\Re z + |tz|}\prod_{j=1}^d |z-\mu_j|^{-\frac12},\]
which is still exponentially convergent as $\Re z\to-\infty$ on $C$, provided that $|t|\ll 1$. 
Using the dominated convergence theorem, we can then exchange summation and integration to get
\begin{align*}
    G(t) &= \frac{1}{2\pi\ii}\oint_C \e^z \sum_{k=0}^\infty \frac{(tz)^k}{k!} \prod_{j=1}^d(z-\mu_j)^{-\frac12} \d z
    =\frac{1}{2\pi\ii} \oint_C \e^{(1+t)z} \prod_{j=1}^d(z-\mu_j)^{-\frac12} \d z \\
    &= (1+t)^{\frac{d}{2}-1} \frac{1}{2\pi\ii}\oint_{C'} \e^z \prod_{j=1}^d\qty(z-(1+t)\mu_j)^{-\frac12}\d z,
\end{align*}
where in the final line, we assumed that $t$ is real, and changed the integration variable to $(1+t)z$. The new contour $C'=(1+t)C$ (point-wise dilation by $1+t$) is still a Hankel-type contour, so we can apply the formula \eqref{Zd-hankel} by Theorem \ref{thm:DZ-keyhole-int} to get the explicit expression
\begin{equation} \label{Mdk-mgf-expr}
    G(t) = \frac{(1+t)^{\frac{d}{2}-1}}{\Gamma(\frac{d}{2})} Z^{(d)}((1+t)\mu).
\end{equation}

Due to the convergence of the power series \eqref{Mdk-mgf}, its coefficients $M^{(d)}_k$ can be obtained by differentiation of \eqref{Mdk-mgf-expr} with respect to $t$ at $t=0$. Using \eqref{Z-mu} to express $Z^{(d)}(\mu)$, we get
\begin{align*}
    M^{(d)}_k(\mu) &=
    \frac{\d^k G(t)}{\d t^k} \bigg|_{t=0}
    =\frac{1}{\omega_d\Gamma(\frac{d}{2})} \int_{\BbbS^{d-1}} \frac{\d^k}{\d t^k} \left[ (1+t)^{\frac{d}{2}-1}\e^{(1+t) \sum_{j=1}^d \mu_j m_j^2} \right]_{t=0} \d m \\
    &=\frac{1}{2\pi^{\frac{d}{2}}}
    \int_{\BbbS^{d-1}} \e^{\sum_{j=1}^d \mu_j m_j^2} \sum_{l=0}^k \binom{k}{l}\qty(\sum_{j=1}^d \mu_j m_j^2)^{k-l} \cdot \qty(\frac{d}{2}-1)\cdots\qty(\frac{d}{2}-l) \d m,
\end{align*}
which simplifies into \eqref{Mdk-expr}.
\end{proof}

\subsection{Asymptotic expansion}

Now that we know about the auxiliary functions $M^{(d)}_k$, we discuss the asymptotic property of $Z^{(d)}$ as one or more eigenvalues $\mu_j\to -\infty.$ It is in fact more convenient to consider all the $M^{(d)}_k$ simultaneously.

The estimation requires a set of elementary lemmas. The first lemma is an estimate on the ratio between rising products and factorials.

\begin{lemma} \label{lem:pfact-ratio}
For any $p\ge 0$ and $n\in\BbbN$,
\begin{equation}
    \frac{p(p+1)\cdots(p+n-1)}{n!} \le K_1 (n+1)^{p-1},
\end{equation}
where the constant $K_1=K_1(p)$, and the LHS is regarded as 1 when $n=0$.
\end{lemma}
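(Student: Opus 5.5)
The plan is to write the left-hand side through Gamma functions and then control the ratio by a standard Gamma-ratio bound. For $p>0$ we have $p(p+1)\cdots(p+n-1)=\Gamma(p+n)/\Gamma(p)$, so the left-hand side equals
\[
\frac{\Gamma(p+n)}{\Gamma(p)\,\Gamma(n+1)} .
\]
The case $p=0$ is trivial: the left-hand side is $1$ for $n=0$ and $0$ for $n\ge 1$, so any $K_1\ge 1$ works. The case $n=0$ is also trivial, since then the left-hand side is $1$ and $(n+1)^{p-1}=1$. We may therefore assume $p>0$ and $n\ge 1$.

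The key step is to show that, for fixed $p>0$, the quantity $\Gamma(n+p)/\big(\Gamma(n+1)(n+1)^{p-1}\big)$ is bounded uniformly in $n\ge1$. I would argue this in one of two ways.

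\emph{Via Stirling.} Stirling's formula gives $\Gamma(n+p)/\Gamma(n+1)\sim n^{p-1}$ as $n\to\infty$. The ratio above therefore tends to $1$, and a convergent sequence is bounded.

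\emph{Elementary alternative.} Take logarithms:
\[
\ln\frac{p(p+1)\cdots(p+n-1)}{n!}=\sum_{j=1}^{n}\ln\Big(1+\frac{p-1}{j}\Big).
\]
For $p\ge1$, use $\ln(1+x)\le x$ and the bound $\sum_{j\le n}1/j\le 1+\ln(n+1)$. This gives the upper bound $(p-1)(1+\ln(n+1))$, i.e.\ the left-hand side is at most $e^{p-1}(n+1)^{p-1}$.

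For $0<p<1$, the $j=1$ term is $\ln p$, which is finite. For $j\ge2$ we have $x=(p-1)/j\in(-\tfrac12,0)$, where $\ln(1+x)\le x$ still holds. Hence
\[
\sum_{j=2}^{n}\ln\Big(1+\frac{p-1}{j}\Big)\le (p-1)\sum_{j=2}^{n}\frac1j\le (p-1)\big(\ln(n+1)-1\big).
\]
The last inequality uses $\sum_{j=2}^{n}1/j\ge \ln(n+1)-1$ together with $p-1<0$. Combining, the left-hand side is at most $p\,e^{1-p}(n+1)^{p-1}$.

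So in every case $K_1=\max(1,e^{|p-1|})$ suffices. The only mild care needed is the sign of $p-1$: it flips which harmonic-sum bound, upper or lower, is required. I expect no real obstacle.
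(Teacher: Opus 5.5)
Your elementary argument is essentially the paper's proof. The paper also writes the ratio as $\prod_{j=1}^n\bigl(1+\frac{p-1}{j}\bigr)$ and bounds it by $\exp\bigl((p-1)\sum_{j=1}^n \frac1j\bigr)$ via $1+x\le \e^x$. It then compares the harmonic sum with $\ln(n+1)$ from the side dictated by the sign of $p-1$, using $1-\ln 2\le \sum_{j=1}^n\frac1j-\ln(n+1)<c$, which gives $K_1(p)=\max\{\e^{c(p-1)},1\}$. One small point: your bound $p\,\e^{1-p}\le 1$ for $0<p<1$ already gives $K_1=1$ on that range. This matters because the explicit constant $K_2$ in Theorem~\ref{thm:Mdk-asymp-mult} tacitly uses $K_1(\tfrac12)=1$, while your stated choice $\max(1,\e^{|p-1|})$ would give $\e^{1/2}$ there.
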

\begin{proof}
When $n=0$, $K_1(p)=1$ fulfills the estimate. 
When $n\ge 1$, we use the inequality $1+x\le \e^x, x\ge -1$ to get
\[ \text{LHS} = \prod_{j=1}^{n} \qty(1+\frac{p-1}{j})
    \le \prod_{j=1}^{n} \e^{\frac{p-1}{j}} = \exp((p-1)\sum_{j=1}^{n}\frac1j).\]
As $\{ \sum_{k=1}^n \frac1k - \ln (n+1)\}$ increases monotonically to the Euler-Mascheroni constant $c\approx 0.577\cdots$, we have that
\[ \ln (n+1)+(1-\ln 2) \le 1+\frac12+\cdots+\frac{1}{n} < \ln (n+1)+c,\]
so
\begin{align*}
    \text{LHS} &\le \exp[(p-1)\ln(n+1) + \max\qty{c(p-1), (1-\ln 2)(p-1)}] \\
    &=\max\{\e^{c(p-1)}, \e^{(1-\ln2)(p-1)}\} (n+1)^{p-1}.
\end{align*}
Therefore, we may choose $K_1(p)=\max\{\e^{c(p-1)}, 1\}$ as a uniform constant valid for all $p$ and $n$.
\end{proof}

The second lemma is an estimate on the Taylor remainder of the function $\prod_{j=1}^s (z-\nu_j)^{-\frac12}$ with respect to $z$, whose special case $s=1$ was used in the expansion \eqref{pow-expand}.

\begin{lemma} \label{lem:pow-taylor-err-mult}
Let $\vec a=[a_1,\ldots,a_s]^T, \vec p=[p_1,\ldots,p_s]^T$ be vectors of positive real numbers. Denote by
\begin{equation}
    \Phi_{\vec p}(z;\vec a)=\prod_{j=1}^s \qty(1+\frac{z}{a_j})^{-p_j}
\end{equation}
and
\begin{equation}
    r_{\vec p;n}(z;\vec a) = \Phi_{\vec p}(z;\vec a) - \sum_{l=0}^n \frac{z^l}{l!}\qty[\frac{\d^l}{\d z^l} \Phi_{\vec p}(z)]_{z=0}
\end{equation}
the Taylor remainder of $\Phi_{\vec p}$ at $z=0$ (we also abbreviate them as $\Phi_{\vec p}(z)$ and $r_{\vec p;n}(z)$). Then, we have that
\begin{equation} \label{pow-taylor-err-mult}
    |r_{\vec p;n}(z)| \le \frac{K_1(\vec p) (n+2)^{P-1} }{(\sin\theta)^{n+P+1} } |z|^{n+1} \sum_{|\beta|=n+1} \frac{1}{\vec a^\beta}
\end{equation}
uniformly for all $|{\arg z}|\le \pi-\theta$ ($\theta\in(0,\frac{\pi}{2})$), where
\begin{equation}
    K_1(\vec p)=\prod_{j=1}^s K_1(p_j)
\end{equation}
depends on $\vec p$ only, $P\triangleq p_1+\cdots+p_s$ and $\beta=(\beta_1,\ldots,\beta_s)\in\BbbN^s$ denotes a multi-index.
\end{lemma}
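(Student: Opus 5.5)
The plan is to use the integral form of the Taylor remainder along the segment from $0$ to $z$, combined with the Leibniz rule for the product $\Phi_{\vec p}$. Since $|\arg z|\le\pi-\theta$ and every $a_j>0$, the whole segment $\{tz: t\in[0,1]\}$ stays in the same sector, and each factor $(1+w/a_j)^{-p_j}$ is analytic there. Hence
\[
r_{\vec p;n}(z) = \frac{z^{n+1}}{n!}\int_0^1 (1-t)^n\, \Phi_{\vec p}^{(n+1)}(tz)\d t .
\]
It therefore suffices to bound $|\Phi_{\vec p}^{(n+1)}(w)|$ uniformly for $w$ in the sector. The factor $\frac{(n+1)!}{n!}\int_0^1(1-t)^n\d t=1$ then disappears, once the $(n+1)!$ produced by Leibniz's rule is combined with it.

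\textbf{Step 1 (Leibniz).} Writing $(p)_k=p(p+1)\cdots(p+k-1)$, we have
\[
\frac{1}{(n+1)!}\Phi_{\vec p}^{(n+1)}(w)=\sum_{|\beta|=n+1}\prod_{j=1}^s \frac{(-1)^{\beta_j}(p_j)_{\beta_j}}{\beta_j!\,a_j^{\beta_j}}\qty(1+\frac{w}{a_j})^{-p_j-\beta_j}.
\]

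\textbf{Step 2 (geometric bound).} For $|\arg w|\le\pi-\theta$ with $\theta\in(0,\frac\pi2)$, the point $w/a_j$ lies in the same sector. Its distance to $-1$ is therefore at least $\sin\theta$, since the nearest points of the sector to $-1$ lie on the boundary rays of angle $\pi-\theta$, at distance $\sin\theta$. Consequently
\[
\prod_j\qty|1+\frac{w}{a_j}|^{-p_j-\beta_j}\le(\sin\theta)^{-(n+1+P)}.
\]
This bound is uniform in $w$, in the $a_j$ and in $\beta$, because $\sin\theta<1$ and $\sum_j(p_j+\beta_j)=n+1+P$.

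\textbf{Step 3 (coefficients).} This is the step I expect to be the main obstacle: bounding $\prod_j (p_j)_{\beta_j}/\beta_j!$ by a constant times $(n+2)^{P-1}$, uniformly over $|\beta|=n+1$. The naive route applies Lemma \ref{lem:pfact-ratio} factor by factor, giving $\prod_j K_1(p_j)(\beta_j+1)^{p_j-1}$. This works cleanly when all $p_j\ge1$, since each $(\beta_j+1)^{p_j-1}\le(n+2)^{p_j-1}$. It is awkward, however, when some $p_j<1$ (the main case $p_j=\frac12$), because then the exponents $p_j-1$ are negative.

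I would instead first try the observation that all these coefficients are nonnegative and sum, by Vandermonde's identity for rising factorials, to $(P)_{n+1}/(n+1)!$. Hence each single coefficient is at most $(P)_{n+1}/(n+1)!$, which by Lemma \ref{lem:pfact-ratio} is at most $K_1(P)(n+2)^{P-1}$. The remaining task is to reconcile the constant $K_1(P)$ with the stated $\prod_j K_1(p_j)$.

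If the two constants do not compare directly, I would split the indices into $J_{\ge}=\{j:p_j\ge1\}$ and $J_<=\{j:p_j<1\}$, and treat the sub-blocks separately. On $J_<$, each factor satisfies $(p_j)_{\beta_j}/\beta_j!\le1$, and Lemma \ref{lem:pfact-ratio} gives decay. On $J_{\ge}$, the factor-by-factor bound applies. I would then use $(\beta_j+1)\le(n+2)$ together with a case check to verify that the total power does not exceed $(n+2)^{P-1}$ up to the product constant. Failing that, I would simply record the result with the constant $\max\{K_1(P),\prod_jK_1(p_j)\}$, which suffices for all later uses, since only the $n$-dependence matters there.

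\textbf{Step 4 (assembly).} Summing over $|\beta|=n+1$ gives
\[
|\Phi_{\vec p}^{(n+1)}(tz)|\le (n+1)!\,K\,(n+2)^{P-1}(\sin\theta)^{-(n+1+P)}\sum_{|\beta|=n+1}\vec a^{-\beta}.
\]
Inserting this into the remainder formula and using $(n+1)\int_0^1(1-t)^n\d t=1$ yields \eqref{pow-taylor-err-mult}.
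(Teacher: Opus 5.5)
Your skeleton matches the paper's proof and is correct: the integral remainder along $[0,z]$, Leibniz's rule, and the bound $|1+w/a_j|\ge\sin\theta$ on the sector. The gap is Step 3, which you leave unresolved, so the proposal does not prove the lemma with the stated constant $K_1(\vec p)=\prod_j K_1(p_j)$.

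The Vandermonde route gives $K_1(P)$, which is in general strictly larger. With $c$ the Euler--Mascheroni constant, $\vec p=(1,1)$ gives $K_1(2)=\e^{c}>1=K_1(1)^2$, and $\vec p=(\frac12,\frac12,\frac12)$ gives $K_1(\frac32)=\e^{c/2}>1=K_1(\frac12)^3$. Your fallback $\max\{K_1(P),\prod_jK_1(p_j)\}$ therefore proves a weaker statement. It is also not harmless downstream: the explicit constant $K_2$ in \eqref{K2-const} uses $[K_1(\frac12)]^s=1$, and would acquire an extra factor $K_1(\frac s2)>1$ once $s\ge3$. The split-index ``case check'' is not carried out either. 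If on $J_<$ you only use $(\beta_j+1)^{p_j-1}\le 1$, it fails exactly when all $p_j<1$ and $P<1$ (e.g.\ $s=1$, $p_1=\frac12$), where the decay $(n+2)^{P-1}$ must actually be produced.

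The missing observation is that the factor-by-factor route you called naive already works for every $p_j>0$, with no case distinction. After Lemma \ref{lem:pfact-ratio} gives $\prod_j K_1(p_j)(\beta_j+1)^{p_j-1}$, split each negative power as a quotient:
\[
\prod_{j=1}^s(\beta_j+1)^{p_j-1}=\frac{\prod_{j}(\beta_j+1)^{p_j}}{\prod_{j}(\beta_j+1)}\le\frac{(n+2)^P}{1+\sum_j\beta_j}=(n+2)^{P-1}.
\]
The numerator bound uses $\beta_j+1\le n+2$. The denominator bound comes from expanding $\prod_j(1+\beta_j)\ge 1+|\beta|=n+2$. This is exactly the paper's argument.

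Alternatively, your two ideas do close the gap if combined precisely. When some $p_j\ge 1$ or $P\ge 1$, the crude bound $\prod_j(\beta_j+1)^{p_j-1}\le(n+2)^{\sum_{p_j\ge1}(p_j-1)}\le(n+2)^{P-1}$ suffices. When $P<1$, every $p_j<1$, and the Vandermonde bound gives $K_1(P)=1=\prod_jK_1(p_j)$. The quotient bound above is simpler, though.
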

\begin{proof}
By Leibniz's formula, we have that
\begin{equation} \label{pow-mult-der}
\begin{aligned}
    \frac{\d^l}{\d z^l} \Phi_{\vec p}(z) &= \sum_{|\beta|=l} \frac{l!}{\beta!} \prod_{j=1}^s \frac{\d^{\beta_j}}{\d z^{\beta_j}}\qty(1+\frac{z}{a_j})^{-p_j}\\
&= (-1)^l \sum_{|\beta|=l} \frac{l!}{\beta!} \prod_{j=1}^s \frac{p_j(p_j+1)\cdots(p_j+\beta_j-1)}{a_j^{\beta_j}}\qty(1+\frac{z}{a_j})^{-p_j-\beta_j}.
\end{aligned}
\end{equation}
Using the integral representation of the remainder with the path being the segment connecting $0$ and $z$, we have that\small
\begin{align*}
    |r_{\vec p;n}(z)|
    &=\left| \frac{1}{n!} \int_0^z (z-\zeta)^n\frac{\d^{n+1}}{\d \zeta^{n+1}}\Phi_{\vec p}(\zeta) \d\zeta \right| \\
    &\le (n+1)\int_0^z |z-\zeta|^n |{\d \zeta}| \sum_{|\beta|=n+1} \prod_{j=1}^s \frac{p_j(p_j+1)\cdots(p_j+\beta_j-1)}{\beta_j! a_j^{\beta_j}} \left|1+\frac{\zeta}{a_j}\right|^{-p_j-\beta_j}.
\end{align*}\normalsize
We use Lemma \ref{lem:pfact-ratio} and some basic arithmetic to estimate the rising products:
\begin{align*}
    \prod_{j=1}^s  \frac{p_j(p_j+1)\cdots(p_j+\beta_j-1)}{\beta_j! a_j^{\beta_j}} 
    &\le \prod_{j=1}^s \frac{K_1(p_j)(\beta_j+1)^{p_j-1}}{a_j^{\beta_j}} 
    = \frac{K_1(\vec p)}{\vec a^{\beta}}\cdot  \frac{ \prod_{j=1}^s(\beta_j+1)^{p_j}}{ \prod_{j=1}^s (\beta_j+1)} \\
    &\le \frac{K_1(\vec p)}{\vec a^{\beta}} \cdot \frac{(n+2)^P}{1+\sum_{j=1}^s \beta_j} =\frac{K_1(\vec p) (n+2)^{P-1}}{\vec a^{\beta}}.
\end{align*}
We also use the geometric relation that $|{\arg\zeta}|\le \pi-\theta\Rightarrow |1+\frac{\zeta}{a_j}| \ge \sin\theta$ to get
\begin{align*}
    \int_0^z |z-\zeta|^n |{\d \zeta}| \prod_{j=1}^s \left|1+\frac{\zeta}{a_j}\right|^{-p_j-\beta_j}
&\le \frac{1}{(\sin\theta)^{|\beta|+P}}\int_0^{|z|} (|z|-t)^n \d t\\
&= \frac{|z|^{n+1}}{(n+1)(\sin\theta)^{n+P+1}}.
\end{align*}
Combining these estimates, we obtain
\[|r_{\vec p;n}(z)| \le \frac{K_1(\vec p) (n+2)^{P-1}}{(\sin\theta)^{n+P+1}} |z|^{n+1} \sum_{|\beta|=n+1} \frac{1}{\vec a^{\beta}},\]
which is exactly as predicted in \eqref{pow-taylor-err-mult}.
\end{proof}

To represent coefficients of multivariate series more compactly, we introduce the following notations for multi-index $\alpha\in\BbbR^s$:
\begin{subequations} \label{multi-index}
\begin{itemize}
\item Scalar addition: for $c\in\BbbR$, we write
\begin{equation}
    \alpha+c \triangleq (\alpha_1+c,\ldots,\alpha_s+c).
\end{equation}
\item Factorials and double factorials: if $\alpha\in\BbbN^s$, we define
\begin{equation}
    \alpha! \triangleq \prod_{j=1}^s \alpha_j!,\
    \alpha!! \triangleq \prod_{j=1}^s \alpha_j!!,
\end{equation}
where $0!=0!!=(-1)!!=1$ by convention.
\item Monomial: for $x=(x_1,\ldots,x_s)\in\BbbR^s$, we write
\begin{equation}
    x^\alpha \triangleq \prod_{j=1}^s x_j^{\alpha_j}.    
\end{equation}
\end{itemize}
\end{subequations}

To describe the decay rate with respect to the variables $\mu_j\to -\infty$, we define the modulus
\begin{equation} \label{w-alpha}
    \llbracket x \rrbracket_\alpha = \prod_{j=1}^s \max\{1,|x_j|\}^{-\alpha_j}
\end{equation}
for $x\in\BbbR^s, \alpha\in(\BbbR_+)^s$. The function $\llbracket \cdot\rrbracket_\alpha$ is strictly positive, uniformly bounded by 1 and shrinks to 0 when any $x_j\to -\infty$. 
If $\alpha=a\vec 1=[a,\ldots,a]^T$ is composed of the same number $a$, we also abbreviate $\llbracket \cdot\rrbracket_\alpha$ as $\llbracket \cdot\rrbracket_a$ for simplicity.
An obvious property is that $\llbracket \cdot\rrbracket_\alpha$ factorizes into components: if we partition $x$ into $x=\smqty[x^{(1)}\\x^{(2)}]$ ($x^{(1)}\in \BbbR^t, x^{(2)}\in\BbbR^{s-t}$), and $\alpha$ accordingly into $\smqty[\alpha^{(1)}\\ \alpha^{(2)}]$, then
\begin{equation} \label{w-alpha-div}
    \llbracket x \rrbracket_\alpha = 
    \llbracket x^{(1)} \rrbracket_{\alpha^{(1)}} \llbracket x^{(2)} \rrbracket_{\alpha^{(2)}}.
\end{equation}

We are now ready for the asymptotic expansion when one or more eigenvalues of $\mu$ are sent to $-\infty$.

\begin{theorem} \label{thm:Mdk-asymp-mult}
Assume that $0=\mu_1\ge\mu_2\ge\cdots\ge\mu_d$ and that $1\le s<d$, and denote by $\mu=\mqty[\mu'\\\nu]$, with $\mu'=[\mu_1,\ldots,\mu_{d-s}]^T\in\BbbR^{d-s}$ and $\nu=[\mu_{d-s+1},\ldots,\mu_d]^T=[\nu_1,\ldots,\nu_s]^T\in \BbbR^s$.
If $\nu_1\le -1$, then $M^{(d)}_k$ has the asymptotic expansion
\begin{equation} \label{Mdk-asymp-mult}
    M^{(d)}_k(\mu) = \llbracket \nu \rrbracket_{\frac12} \sum_{|\beta|\le n} \frac{(2\beta-1)!!}{(-2)^{|\beta|} \beta!} \frac{ M^{(d-s)}_{k+|\beta|}(\mu')}{(-\nu)^\beta} + R^{(d,s)}_{k,n}(\mu),
\end{equation}
where $\beta\in\BbbN^s$ is a multi-index. 
The remainder has an estimate
\begin{equation} \label{Mdk-mult-rem}
    \left|R^{(d,s)}_{k,n}\right| \le K_2(n,k,d,s) \llbracket\mu\rrbracket_{\frac12}\sum_{|\beta|=n+1} (-\nu)^{-\beta}
\end{equation}
uniformly for $\nu_1\le -1$,
where the constant $K_2$ has an explicit expression
\begin{equation} \label{K2-const}
    K_2 = \frac{6\sqrt{2}\e}{\pi} (n+2)^{\frac{s}{2}-1} 2^{n+\frac{k}{2}+\frac{d}{4}} (n+k+1)!.
\end{equation}
\end{theorem}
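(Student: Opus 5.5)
We factor the integrand of \eqref{Mdk} along the split $\mu=\mqty[\mu'\\\nu]$. With $a_j=-\nu_j\ge 1$ we have $(z-\nu_j)^{-\frac12}=a_j^{-\frac12}(1+z/a_j)^{-\frac12}$, so
\[ M^{(d)}_k(\mu)=\prod_{j=1}^s a_j^{-\frac12}\,\frac{1}{2\pi\ii}\oint_C z^k\e^z\prod_{j=1}^{d-s}(z-\mu_j)^{-\frac12}\,\Phi_{\vec p}(z;\vec a)\,\d z, \qquad \vec p=\tfrac12\vec 1 . \]
Since $a_j\ge1$, the prefactor equals $\llbracket\nu\rrbracket_{\frac12}$. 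We split $\Phi_{\vec p}=\sum_{l\le n}(\text{Taylor terms})+r_{\vec p;n}$. By \eqref{pow-mult-der} at $z=0$, the Taylor part is $\sum_{|\beta|\le n}\prod_j\frac{(\frac12)_{\beta_j}}{\beta_j!}(-z/a_j)^{\beta_j}$. Using $(\frac12)_m/m!=(2m-1)!!/(2^m m!)$, the $\beta$-term gives exactly $\frac{(2\beta-1)!!}{(-2)^{|\beta|}\beta!}(-\nu)^{-\beta}$ times the contour integral of $z^{k+|\beta|}\e^z\prod_{j\le d-s}(z-\mu_j)^{-\frac12}$, which is $M^{(d-s)}_{k+|\beta|}(\mu')$. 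This needs $\mu'$ to still satisfy the gauge $0=\mu_1\ge\cdots$ and $C$ to be any Hankel contour (Proposition \ref{prop:hankel-int}). The main sum in \eqref{Mdk-asymp-mult} is therefore exact, and
\[ R^{(d,s)}_{k,n}=\llbracket\nu\rrbracket_{\frac12}\frac{1}{2\pi\ii}\oint_C z^k\e^z\prod_{j\le d-s}(z-\mu_j)^{-\frac12}r_{\vec p;n}(z;\vec a)\,\d z. \]
We must have $C$ avoid $[\nu_1,0]$ entirely so that $\Phi$ stays analytic there; this is fine because the singularities of $\Phi$ lie at $z=-a_j\le-1$, on the cut.

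To bound the remainder, we fix a concrete Hankel contour on which both $|{\arg z}|\le\pi-\theta$ (with fixed $\theta$, say $\theta=\pi/4$, giving $\sin\theta=2^{-1/2}$) and $\Re z\to-\infty$ hold. Lemma \ref{lem:pow-taylor-err-mult} forces the contour to stay in a sector, so rays parallel to the axis will not work. Natural choices are the two rays $z=t\e^{\pm\ii 3\pi/4}$, $t\ge\rho$, joined by an arc $|z|=\rho$ of radius about $1$. On this contour Lemma \ref{lem:pow-taylor-err-mult} gives
\[ |r_{\vec p;n}(z)|\le K_1(\vec p)\,(n+2)^{\frac s2-1}\,2^{\frac{n+s/2+1}{2}}\,|z|^{n+1}\sum_{|\beta|=n+1}\vec a^{-\beta}, \]
where $K_1(\tfrac12)=1$. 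The remaining factor $\prod_{j\le d-s}|z-\mu_j|^{-\frac12}$, with $\mu_j\le0$, is bounded on the rays by $(|z|\sin\theta)^{-\frac12}$ per factor. Near the arc we choose the radius so that $|z-\mu_j|\gtrsim\max\{1,|\mu_j|\}$. This is where the factor $\llbracket\mu'\rrbracket_{\frac12}$ comes from, which combines with $\llbracket\nu\rrbracket_{\frac12}$ into the stated $\llbracket\mu\rrbracket_{\frac12}$.

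The main obstacle is making this factor uniform with the $\llbracket\mu'\rrbracket_{\frac12}$ weight. Bounding $|z-\mu_j|\ge\max(1,|\mu_j|)\sin\theta$ is too crude along the whole ray. I would instead use $|z-\mu_j|\ge\sin\theta\,\max\{|z|,|\mu_j|\}$, which holds for $z$ in the sector and $\mu_j\le0$, together with $\max\{|z|,|\mu_j|\}\ge\max\{1,|\mu_j|\}$ on $|z|\ge1$; that leaves at most $2^{d/4}$ from $(\sin\theta)^{-(d-s)/2}$. What remains is
\[ \int_C|z|^{n+k+1}\e^{\Re z}|\d z|\le \text{arc}+2\int_1^\infty t^{n+k+1}\e^{-t/\sqrt2}\,\d t\lesssim 2^{\frac{n+k+2}{2}}(n+k+1)!. \]
Collecting the powers of $2$ and the constants $\frac{1}{2\pi}\cdot 2\cdot(\text{arc bound}\le \e)$ should reproduce $K_2$ in \eqref{K2-const}; I expect some slack in the exact constants $6\sqrt2\e/\pi$, to be tuned by the choice of arc radius.
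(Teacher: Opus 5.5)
Your proposal is correct and is essentially the paper's proof. You factor out $\llbracket\nu\rrbracket_{\frac12}$, Taylor-expand $\prod_j(1+z/(-\nu_j))^{-\frac12}$, identify the polynomial part with the $M^{(d-s)}_{k+|\beta|}(\mu')$, and bound the remainder via Lemma \ref{lem:pow-taylor-err-mult} on a Hankel contour in the sector $|\arg z|\le\pi-\theta$, $\theta=\pi/4$, together with $|z-\mu_j|\ge\sin\theta\max\{1,|\mu_j|\}$; your detour through $\max\{|z|,|\mu_j|\}$ lands on exactly the bound the paper uses, so it was not ``too crude.'' The only difference is the contour: the paper's $C_1$ has rays starting at $\pm\ii$ joined by the right unit semicircle (Lemma \ref{lem:int-C1-est}), while yours has no slack problem, since $\oint|z|^N|\e^z||\d z|\le\tfrac{3\pi}{2}\e+2\cdot2^{\frac{N+1}{2}}N!<6\e\cdot2^{\frac{N+1}{2}}N!$ for $N=n+k+1\ge1$, and this is precisely the bound that yields $K_2$.
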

\begin{remark}
The estimate \eqref{Mdk-mult-rem} is uniform in $\mu$: boundedness assumptions are imposed on neither $\mu'$ nor $\nu$. If components of $\mu'$ also diverge, the coefficients have their own asymptotic expansions, and a different choice of $s$ may be more effective. 
As usual, the factorial growth of the remainder constant does not imply convergence as $n\to\infty$; rather, the expansion is asymptotic in $\nu_j\to-\infty$.
\end{remark}
\begin{proof}
We choose a Hankel-type contour $C_1$ with the following parametrization (see Figure \ref{fig:contour1}):
\begin{equation} \label{contour-C1}
C_1(t) = \begin{cases}
    -\ii+\e^{\ii\theta}(t+1), & t<-1, \\
    \e^{\frac{\ii\pi}{2}t}, & -1\le t\le 1, \\
    \ii-\e^{-\ii\theta}(t-1), & t>1.
\end{cases}
\end{equation}
Geometrically, it is composed of an arc with radius 1 connecting $\pm\ii$, and two rays going to $-\infty$ at an angle $\theta\in(0,\frac{\pi}{2})$. One of the rays starts from $\ii$ with slope $-\tan\theta$, and the other starts at $-\ii$ with slope $\tan\theta$. It matches the definition of a Hankel-type contour since its imaginary part is linear in its real part.

\begin{figure}
\centering
\begin{tikzpicture}[decoration={markings, 
    mark=at position 0.55 with {\arrow{stealth}}}, line cap=round]
\draw[gray,->](-2,0)--(2,0);
\draw[gray,->](0,-2)--(0,2);
\draw[thick,postaction=decorate] (-2,-2)--(0,-1);
\draw[thick,postaction=decorate](0,-1) arc[start angle=-90, end angle=90, radius=1];
\draw[thick,postaction=decorate](0,1)-- (-2,2);
\draw (-1,2) node{$I_1$};
\draw (1,0) node[anchor=south west]{$I_2$};
\draw (-1,-2) node{$I_3$};
\draw[dashed] (0,1)--(-1,1);
\draw[dashed] (0,-1)--(-1,-1);
\draw (-.5,1) arc[radius=0.5,start angle=180,end angle=154];
\draw (-.7,1.2) node{$\theta$};
\draw (-.7,-1.2) node{$\theta$};
\draw (-.5,-1) arc[radius=0.5,start angle=180,end angle=206];
\end{tikzpicture}
\caption{The contour $C_1$ \eqref{contour-C1}} \label{fig:contour1}
\end{figure}

Similar to \eqref{pow-expand}, we rewrite the integrand of $M^{(d)}_k$ \eqref{Mdk} as
\[ M^{(d)}_k(\mu) = \frac{1}{2\pi\ii}\oint_{C_1} z^k\e^z\prod_{j=1}^{d-s} (z-\mu_j)^{-\frac12} \frac{1}{\sqrt{|\nu_1\cdots\nu_s|}} \cdot \uline{\prod_{j=1}^s \qty(1+\frac{z}{-\nu_j})^{-\frac12}} \d z. \]
The underlined part matches the form of $\Phi_{\vec p}(z)$ in Lemma \ref{lem:pow-taylor-err-mult} with $\vec p=(\frac12,\ldots,\frac12)$, so we expand this function with respect to $z$, and use that lemma to estimate the remainder.
Using the expression \eqref{pow-mult-der} to evaluate the derivatives of $\prod_j (1+\frac{z}{-\nu_j})^{-\frac12}$, we get\small
\begin{equation} \label{Mdk-expand-1}
\begin{aligned}
    M^{(d)}_k(\mu)
    &=\frac{1}{2\pi\ii} \oint_{C_1} z^k \e^z \prod_{j=1}^{d-s}(z-\mu_j)^{-\frac12} \frac{1}{\sqrt{|\nu_1\cdots\nu_s|}} \\
    &\hspace{60pt}\cdot \left[ \sum_{l=0}^n \frac{(-1)^l z^l}{l!}\sum_{|\beta|=l} \frac{l!}{\beta!} \prod_{j=1}^s \frac{\frac12\cdot \frac32\cdots (\beta_j-\frac12)}{(-\nu_j)^{\beta_j}} \right] \d z \\ 
    &\quad + \frac{1}{2\pi\ii} \oint_{C_1} z^k\e^z \prod_{j=1}^{d-s}(z-\mu_j)^{-\frac12} \frac{1}{\sqrt{|\nu_1\cdots\nu_s|}}r_{\frac12,\ldots,\frac12;n}(z;-\nu) \d z.
\end{aligned}
\end{equation}\normalsize
We focus on the summation first.
Extract the common factor $\frac{1}{\sqrt{|\nu_1\cdots\nu_s|}}$, which equals $\llbracket \nu \rrbracket_{\frac12}$ by \eqref{w-alpha} (since $|\nu_j|\ge 1$), and translate the summands into the multi-index notations \eqref{multi-index}, obtaining
\[\frac{\llbracket \nu \rrbracket_{\frac12}}{2\pi\ii} 
\oint_{C_1}  z^k \e^z \prod_{j=1}^{d-s}(z-\mu_j)^{-\frac12} \qty[\sum_{l=0}^n \sum_{|\beta|=l} z^l \frac{(-1)^l(2\beta-1)!!}{2^{|\beta|} \beta! (-\nu)^\beta} ]\d z. \]
Then, we reorganize the sum in terms of the multi-index $\beta\in\BbbN^s$, which traverses all indices $|\beta|\le n$, and extract all constants from the integral, transcribing it into
\begin{align*}
    &\ \ \llbracket \nu \rrbracket_{\frac12} \sum_{|\beta|\le n} \frac{(2\beta-1)!!}{(-2)^{|\beta|} \beta! (-\nu)^\beta} \cdot \frac{1}{2\pi\ii} \oint_{C_1} z^{k+|\beta|} \e^z \prod_{j=1}^{d-s}(z-\mu_j)^{-\frac12}\d z \\
    &=\llbracket \nu \rrbracket_{\frac12} \sum_{|\beta|\le n} \frac{(2\beta-1)!!}{(-2)^{|\beta|} \beta!} \frac{M^{(d-s)}_{k+|\beta|}(\mu')}{(-\nu)^\beta},
\end{align*}
by \eqref{Mdk} in dimension $d-s$, which matches the series in \eqref{Mdk-asymp-mult} exactly.

Hence, the remainder term $R^{(d,s)}_{k,n}$ corresponds to the second term in \eqref{Mdk-expand-1}:
\begin{equation} \label{rem-dskn-expr}
    R^{(d,s)}_{k,n}(\mu)
    =\frac{\llbracket \nu \rrbracket_{\frac12}}{2\pi\ii} \oint_{C_1} z^k\e^z \prod_{j=1}^{d-s}(z-\mu_j)^{-\frac12} r_{\frac12,\ldots,\frac12;n}(z;-\nu) \d z .
\end{equation}
By the geometric property of the contour $C_1$, we have that
\[|z-\mu_j| \ge \dist(\mu_j, C_1) \ge \max\{1, |\mu_j|\sin\theta\} \ge \sin\theta \max\{1,|\mu_j|\},\]
and thus,
\begin{equation} \label{sqrt-dist-on-C1}
    \prod_{j=1}^{d-s} |z-\mu_j|^{-\frac12} \le \prod_{j=1}^{d-s}\frac{\max\{1,|\mu_j|\}^{-\frac12}}{(\sin\theta)^{\frac12}} = \frac{\llbracket \mu' \rrbracket_{\frac12}}{(\sin\theta)^{\frac{d-s}{2}}}
\end{equation}
by \eqref{w-alpha}.
It is also obvious that $|\arg(\frac{z}{-\nu_j})|<\pi-\theta$ for all $z\in C_1$ and $\nu_j<0$, so the estimate in Lemma \ref{lem:pow-taylor-err-mult} holds for the Taylor remainder:
\begin{equation} \label{pow-rem-on-C1}
    \left| r_{\frac12,\ldots,\frac12;n} (z;-\nu)\right| \le \frac{[K_1(\frac12)]^s (n+2)^{\frac{s}{2}-1}}{(\sin\theta)^{n+\frac{s}{2}+1}} |z|^{n+1} \sum_{|\beta|=n+1} \frac{1}{(-\nu)^\beta}.
\end{equation}
Applying \eqref{sqrt-dist-on-C1} and \eqref{pow-rem-on-C1} on \eqref{rem-dskn-expr}, we have that
\begin{align*}
    |R^{(d,s)}_{k,n}| &\le \frac{\llbracket \nu \rrbracket_{\frac12}}{2\pi}
    \cdot \frac{\llbracket \mu' \rrbracket_{\frac12}}{(\sin\theta)^{\frac{d-s}{2}}} 
    \cdot \frac{[K_1(\frac12)]^s (n+2)^{\frac{s}{2}-1}}{(\sin\theta)^{n+\frac{s}{2}+1}} \sum_{|\beta|=n+1} 
    \frac{1}{(-\nu)^{\beta}} 
    \oint_{C_1} |z|^{k+n+1}|\e^z| |{\d z}| \\
    &=\frac{[K_1(\frac12)]^s (n+2)^{\frac{s}{2}-1}}{2\pi (\sin\theta)^{n+\frac{d}{2}+1} } \llbracket\mu\rrbracket_{\frac12} \sum_{|\beta|=n+1} \frac{1}{(-\nu)^{\beta}} \cdot  \uline{\oint_{C_1} |z|^{k+n+1}|\e^z| |{\d z}|},
\end{align*}
where we used the factorization property \eqref{w-alpha-div} to write $\llbracket\mu\rrbracket_{\frac12}=\llbracket \nu \rrbracket_{\frac12}\llbracket \mu' \rrbracket_{\frac12}$.
Hence, the final step is to estimate the underlined integral above, which is analogous to Hankel's formula \eqref{gamma-hankel} of the $\Gamma$ function. The following lemma provides the required estimate.

\begin{lemma}\label{lem:int-C1-est}
\begin{equation}
    \oint_{C_1} |z|^n |\e^z| |{\d z}| \le \frac{6\e\cdot n!}{(\cos\theta)^{n+1}}.
\end{equation}
\end{lemma}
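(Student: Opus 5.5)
We split the integral along the three pieces of $C_1$ in \eqref{contour-C1}: the unit arc $I_2$ and the two rays $I_1$, $I_3$. By the symmetry $z\mapsto\bar z$, which maps $C_1$ to itself and preserves $|z|^n|\e^z|$, the two rays contribute equally, so we have to bound the arc once and one ray twice.

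\textbf{The arc.} On $I_2$ we have $|z|=1$ and $|\e^z|=\e^{\Re z}\le \e$. The arc has length $\pi$, so
\[
\int_{I_2}|z|^n|\e^z|\,|{\d z}|\le \pi\e .
\]
Since $n!\ge1$ and $(\cos\theta)^{-(n+1)}\ge 1$, this is at most $\pi\e\, n!/(\cos\theta)^{n+1}$.

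\textbf{A ray.} Parametrize $I_1$ by $z=\ii-\e^{-\ii\theta}r$ with $r\ge0$. Then $\Re z=-r\cos\theta$ and $|{\d z}|=\d r$. By the triangle inequality, $|z|\le 1+r$. Hence
\[
\int_{I_1}|z|^n|\e^z|\,|{\d z}|\le\int_0^\infty(1+r)^n\e^{-r\cos\theta}\,\d r .
\]
Substitute $u=1+r$ and write $c=\cos\theta$:
\[
\int_0^\infty(1+r)^n\e^{-rc}\,\d r=\e^{c}\int_1^\infty u^n\e^{-uc}\,\d u\le \e^{c}\int_0^\infty u^n\e^{-uc}\,\d u=\frac{\e^{c}\,n!}{c^{n+1}}\le\frac{\e\cdot n!}{(\cos\theta)^{n+1}} .
\]
The two rays together therefore give at most $2\e\,n!/(\cos\theta)^{n+1}$.

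\textbf{Conclusion.} Adding the pieces,
\[
\oint_{C_1}|z|^n|\e^z|\,|{\d z}|\le (2+\pi)\,\e\,\frac{n!}{(\cos\theta)^{n+1}}\le\frac{6\e\cdot n!}{(\cos\theta)^{n+1}},
\]
since $2+\pi<6$. The only step needing care is the ray estimate. The cleanest route is the shift $u=1+r$, which turns the integral into a Gamma integral at the cost of a factor $\e^{\cos\theta}\le\e$. This avoids expanding $(1+r)^n$ binomially, which would introduce an unnecessary sum.
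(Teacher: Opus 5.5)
Your proof is correct and is essentially the paper's argument: the same three-piece split, with the arc bounded by $\pi\e$ and each ray by $\e\, n!/(\cos\theta)^{n+1}$, for a total of at most $(2+\pi)\e < 6\e$ (the paper simply absorbs $\pi\e$ into $4\e$). Your arc-length parametrization $z=\ii-\e^{-\ii\theta}r$ with the shift $u=1+r$ is the paper's parametrization $z=\ii+(-1+\ii\tan\theta)t$ with the shift $t\mapsto t+\cos\theta$ after the rescaling $t=r\cos\theta$, so it yields the identical incomplete-Gamma bound.
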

\begin{proof}[Proof of Lemma \ref{lem:int-C1-est}]
We divide the integral into three parts: $I_1$ equals the integration from $\ii$ to $-\infty$ (above real axis), $I_3$ from $-\infty$ (below the axis) to $-\ii$, and $I_2$ on the semicircle (see annotations in Figure \ref{fig:contour1}).
Parametrizing the ray starting at $\ii$ by $z=\ii + (-1+\ii\tan\theta) t,$ ($t\in[0,\infty)$), we get that
\begin{align*}
    |I_1| &\le \int_0^\infty (1+ t\sqrt{1+\tan^2\theta})^{n} \cdot\e^{-t} \cdot \sqrt{1+\tan^2\theta}\d t \\
    &\le \frac{1}{(\cos\theta)^{n+1}}\int_0^\infty \qty(t+\cos\theta)^{n} \e^{-t}\d t \\
    &=\frac{\e^{\cos\theta}}{(\cos\theta)^{n+1}} \int_{\cos\theta}^\infty t^{n} \e^{-t}\d t\le \frac{\e \cdot n!}{(\cos\theta)^{n+1}}.
\end{align*}
By symmetry, the same estimate applies to $|I_3|$. On the semicircle, we have that $|z|\le 1$ and $|\e^z|\le \e,$ so
\[|I_2| \le \pi \e < 4\e \cdot \frac{n!}{(\cos\theta)^{n+1}}.\]
Combining these estimates concludes the lemma.
\end{proof}

With Lemma \ref{lem:int-C1-est} established, we eventually obtain the bound
\begin{align*}
    |R^{(d,s)}_{k,n}| &\le 
    \frac{[K_1(\frac12)]^s (n+2)^{\frac{s}{2}-1}}{2\pi (\sin\theta)^{n+\frac{d}{2}+1} } \llbracket\mu\rrbracket_{\frac12} \sum_{|\beta|=n+1} \frac{1}{(-\nu)^{\beta}}  
    \cdot \frac{6\e (n+k+1)!}{(\cos\theta)^{n+k+2}}\\
    &= \frac{3\e[K_1(\frac12)]^s}{\pi \sin\theta\cos^2\theta} \frac{(n+2)^{\frac{s}{2}-1} (n+k+1)!}{(\sin\theta)^{n+\frac{d}{2}}(\cos\theta)^{n+k}} \cdot \llbracket\mu\rrbracket_{\frac12}\sum_{|\beta|=n+1} \frac{1}{(-\nu)^{\beta}}.
\end{align*}
Finally, we let $\theta=\frac{\pi}{4}$ to obtain the estimate \eqref{Mdk-mult-rem}, and the constant expression \eqref{K2-const}.
\end{proof}

\begin{figure}
\centering
\includegraphics[width=\textwidth,trim={2cm 1.3cm 2cm 1cm}]{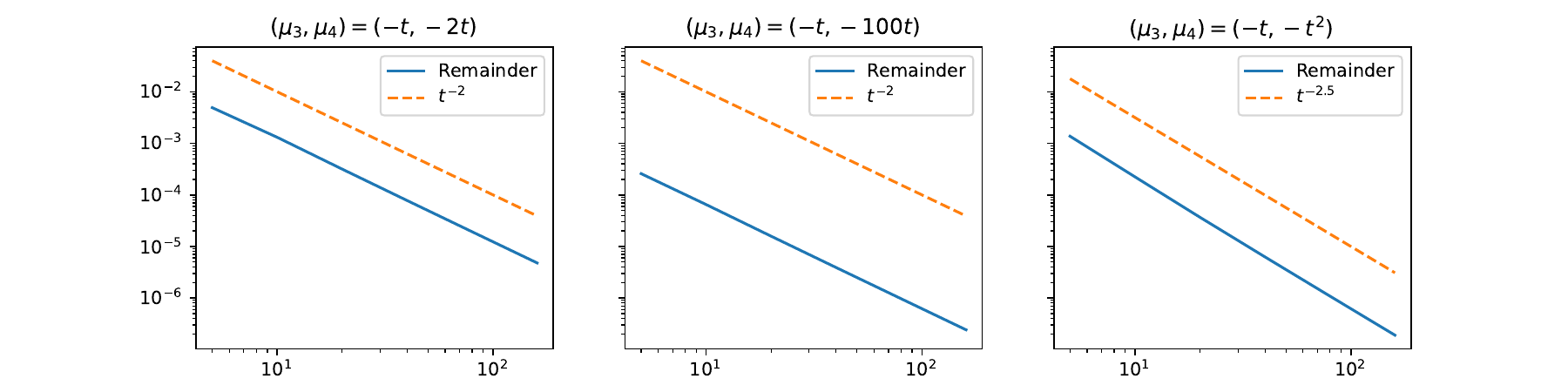}
\caption{Decay of the absolute value of $R^{(4,2)}_{0,0}=M_0^{(4)}-M_0^{(2)}/\sqrt{|\mu_3\mu_4|}$, i.e.~the asymptotic error of $M_0^{(4)}=Z^{(4)}$ with two unbounded eigenvalues $0>\mu_3>\mu_4$ and only the leading term $n=0$. We let $\mu_1=0,\mu_2=-1$, and test three different relative sizes of $(\mu_3,\mu_4)$: $(-t,-2t),(-t,-100t),(-t,-t^2)$. 
The estimate \eqref{Mdk-mult-rem} predicts that $|R^{(4,2)}_{0,0}|$ should decay at $O(|\mu_3\mu_4|^{-\frac12}(|\mu_3|^{-1}+|\mu_4|^{-1}))$, which is consistent with the slopes of the error curves ($O(t^{-2}), O(t^{-2})$ and $O(t^{-\frac52})$ in each case respectively).} \label{fig:asymp-error}
\end{figure}

As an example, we plot the asymptotic errors of \eqref{Mdk-mult-rem} in Figure \ref{fig:asymp-error}, which demonstrates the uniform error estimate independent of the relative eigenvalue sizes.
We also associate \eqref{Mdk-mult-rem} in dimension $d=2$ with the modified Bessel functions in the example below.
\begin{example}
When $d=2$ and $s=1$, we can use \eqref{Mdk-asymp-mult} to recover the well-known asymptotic expansion of the modified Bessel function $\mathrm{I}_0$. Let $\mu=(0,-2\lambda)$. We recall from the proof of Theorem \ref{thm:DZ-keyhole-int} that
\[M^{(2)}_0 = \frac{1}{\Gamma(1)} Z^{(2)}_0 (0,-2\lambda) = \e^{-\lambda} \mathrm{I}_0 (\lambda).\]
Also, when $d=1$, the quantities $M^{(d)}_k$ can be computed explicitly with Hankel's formula \eqref{gamma-hankel}:
\[M^{(1)}_k = \frac{1}{2\pi\ii}\oint_{C} z^{k-\frac12}\e^z \d z = \frac{1}{\Gamma(\frac12-k)} = \frac{(-1)^k (2k-1)!! }{2^k \sqrt{\pi}}.\]
The series \eqref{Mdk-asymp-mult} is now a univariate series with respect to $\frac{1}{-\mu_2}=\frac{1}{2\lambda}.$ With $d=2,k=0$, we get
\begin{align*}
    \e^{-\lambda}\mathrm{I}_0(\lambda) &\sim \frac{1}{\sqrt{2\lambda}}\sum_l \frac{(-1)^l (2l-1)!!}{(2l)!!} 
    \cdot \frac{(-1)^l (2l-1)!! }{2^l \sqrt{\pi}} \frac{1}{(2\lambda)^l} \\
    &= \frac{1}{\sqrt{2\pi\lambda}} \sum_l \frac{((2l-1)!!)^2}{l! \cdot (8\lambda)^{l}}.
\end{align*}
This is exactly the formula presented in \cite{abramowitz_handbook_2013}. Our result also provides the error estimate
\begin{equation} \label{I0-asymp-err}
\begin{aligned}
    &\ \left|\e^{-\lambda} \mathrm{I}_0(\lambda) - \frac{1}{\sqrt{2\pi\lambda}} \sum_{l=0}^N \frac{((2l-1)!!)^2}{l! \cdot (8\lambda)^{l}}\right| = |R^{(2,1)}_{0,N}(0,-2\lambda)| \\
    &\le \frac{6\sqrt{2}\e}{\pi \sqrt{2\lambda}} \frac{2^{N+\frac12} (N+1)!}{(2\lambda)^{N+1} \sqrt{N+2}}
    = \frac{3\sqrt{2}\e}{\pi} \frac{(N+1)!}{\lambda^{N+\frac32}\sqrt{N+2}}
\end{aligned}
\end{equation}
\end{example}

The derivatives of $Z$ also have integral representations \eqref{DZd-hankel}, so their asymptotic expansion with respect to $\nu_j\to-\infty$ can be derived likewise.
In fact, the series we obtain happens to be the same as the term-wise differentiation of \eqref{Mdk-asymp-mult} with respect to $\mu$.
\begin{corollary} \label{cor:DMdk-asymp-mult}
Work under the same assumptions as Theorem \ref{thm:Mdk-asymp-mult}. For any multi-index $\alpha\in\BbbN^d$, we denote by
\[\alpha'=(\alpha_1,\ldots,\alpha_{d-s})\in\BbbN^{d-s},\
\varpi=(\varpi_1,\ldots,\varpi_s)=(\alpha_{d-s+1},\ldots,\alpha_d)\in\BbbN^s\]
the partition of $\alpha$ corresponding to $\mu=(\mu',\nu)$.
Then, the derivative $\D^\alpha M^{(d)}_k$ has the asymptotic expansion
\begin{equation}\label{DMdk-asymp-mult}
    \D^\alpha M^{(d)}_k(\mu) = \llbracket\nu \rrbracket_{\varpi+\frac12} \sum_{|\beta|\le n} \frac{(-1)^{|\beta|}(2\beta+2\varpi-1)!!}{2^{|\beta|+|\varpi|}\beta!} \frac{\D^{\alpha'} M^{(d-s)}_{k+|\beta|}(\mu')}{(-\nu)^{\beta}} + \D^\alpha R^{(d,s)}_{k,n}(\mu),
\end{equation}
where $\varpi+\frac12\triangleq(\varpi_1+\frac12,\ldots,\varpi_s+\frac12)$ follows \eqref{multi-index}.
The remainder \eqref{Mdk-mult-rem} satisfies that
\begin{equation}\label{DMdk-asymp-mult-rem}
    \left| \D^\alpha R^{(d,s)}_{k,n}(\mu) \right| \le K_3(n,k,d,s,\alpha) 
    \llbracket\mu\rrbracket_{\alpha+\frac12} \sum_{|\beta|=n+1} (-\nu)^{-\beta}
\end{equation}
uniformly for $\nu_1\le -1$, where the constant $K_3$ has an expression
\begin{equation} \label{K3-const}
    K_3=\frac{6\sqrt{2} \e (2\alpha-1)!!}{2^{\frac{|\alpha|}{2}} \pi} K_1\qty(\varpi+\tfrac12) (n+2)^{|\varpi|+\frac{s}{2}-1} 2^{n+\frac{k+|\varpi|}{2}+\frac{d}{4}} (k+n+1)!.
\end{equation}
\end{corollary}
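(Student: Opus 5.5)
The plan is to repeat the proof of Theorem \ref{thm:Mdk-asymp-mult}, now applied to the differentiated integrand rather than to $M^{(d)}_k$ itself. The integrand of \eqref{Mdk} is analytic in $\mu$ and decays exponentially along $C_1$, uniformly for $\mu$ in compact sets. Hence, exactly as in Theorem \ref{thm:DZ-keyhole-int}, I may differentiate under the integral sign:
\[
\D^\alpha M^{(d)}_k(\mu)=\frac{(2\alpha-1)!!}{2^{|\alpha|}}\cdot\frac{1}{2\pi\ii}\oint_{C_1} z^k\e^z\prod_{j=1}^{d-s}(z-\mu_j)^{-\frac12-\alpha_j}\prod_{j=1}^s(z-\nu_j)^{-\frac12-\varpi_j}\d z .
\]
I then factor $(z-\nu_j)^{-\frac12-\varpi_j}=|\nu_j|^{-\frac12-\varpi_j}(1+z/(-\nu_j))^{-\frac12-\varpi_j}$. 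The prefactor $\prod_j|\nu_j|^{-\frac12-\varpi_j}$ is exactly $\llbracket\nu\rrbracket_{\varpi+\frac12}$, because $|\nu_j|\ge1$. The remaining product is $\Phi_{\vec p}(z;-\nu)$ with $\vec p=\varpi+\frac12$. I Taylor expand it to order $n$ using \eqref{pow-mult-der}.

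\textbf{Main terms.} The coefficient of $z^l$ is $\frac{(-1)^l}{l!}\sum_{|\beta|=l}\frac{l!}{\beta!}\prod_j\frac{(\varpi_j+\frac12)_{\beta_j}}{(-\nu_j)^{\beta_j}}$, where $(p)_m$ denotes the rising product. Since $(\varpi_j+\tfrac12)_{\beta_j}=\frac{(2\beta_j+2\varpi_j-1)!!}{2^{\beta_j}(2\varpi_j-1)!!}$, the factors $(2\varpi-1)!!/2^{|\varpi|}$ coming from $\D^\alpha$ combine with these to give $\frac{(2\beta+2\varpi-1)!!}{2^{|\beta|+|\varpi|}\beta!}$. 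The leftover $(2\alpha'-1)!!/2^{|\alpha'|}$ together with the $z^{k+|\beta|}$ integral is precisely $\D^{\alpha'}M^{(d-s)}_{k+|\beta|}(\mu')$, by differentiating \eqref{Mdk} in dimension $d-s$. This yields the series in \eqref{DMdk-asymp-mult}. Defining the remainder as the integral of the Taylor remainder $r_{\vec p;n}$, it must coincide with $\D^\alpha R^{(d,s)}_{k,n}$. This holds because the main-term sums of \eqref{Mdk-asymp-mult} and \eqref{DMdk-asymp-mult} correspond under differentiation, a direct check on the explicit coefficients; alternatively, take this integral as the definition and note that it equals $\D^\alpha$ of \eqref{rem-dskn-expr} by differentiating under the integral.

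\textbf{Remainder bound.} On $C_1$ I use $|z-\mu_j|\ge\sin\theta\max\{1,|\mu_j|\}$, which gives $\prod_{j\le d-s}|z-\mu_j|^{-\frac12-\alpha_j}\le \llbracket\mu'\rrbracket_{\alpha'+\frac12}(\sin\theta)^{-|\alpha'|-\frac{d-s}{2}}$. Lemma \ref{lem:pow-taylor-err-mult} with $P=|\varpi|+\frac s2$ bounds $|r_{\vec p;n}|$ by $K_1(\varpi+\frac12)(n+2)^{|\varpi|+\frac s2-1}(\sin\theta)^{-(n+P+1)}|z|^{n+1}\sum_{|\beta|=n+1}(-\nu)^{-\beta}$. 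Lemma \ref{lem:int-C1-est} then bounds $\oint|z|^{k+n+1}|\e^z||\d z|$ by $6\e(k+n+1)!/(\cos\theta)^{n+k+2}$. Setting $\theta=\pi/4$ and collecting the powers of $\sqrt2$ produces \eqref{K3-const}, including the factor $(2\alpha-1)!!/2^{|\alpha|}$ times $2^{|\alpha|/2}$ from the extra $(\sin\theta)^{-|\alpha|}$.

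The main obstacle is this last bookkeeping step: matching the double-factorial normalizations, the weight $\llbracket\mu\rrbracket_{\alpha+\frac12}$ (which requires combining $\llbracket\nu\rrbracket_{\varpi+\frac12}$ with $\llbracket\mu'\rrbracket_{\alpha'+\frac12}$), and the exact powers of $2$ in $K_3$. The analytic content is identical to Theorem \ref{thm:Mdk-asymp-mult}.
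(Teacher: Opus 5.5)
Your proposal is correct and follows the paper's proof essentially step for step: you differentiate under the integral on $C_1$, factor out $\llbracket\nu\rrbracket_{\varpi+\frac12}$, Taylor-expand $\Phi_{\varpi+\frac12}(z;-\nu)$ with Lemma \ref{lem:pow-taylor-err-mult}, identify the remainder with $\D^\alpha R^{(d,s)}_{k,n}$ by termwise differentiation, and close with Lemma \ref{lem:int-C1-est} at $\theta=\pi/4$. If you carry out the last bookkeeping exactly, the powers of $\sqrt2$ give a constant smaller than the stated $K_3$ by a factor $2^{|\varpi|/2}$; this still implies \eqref{DMdk-asymp-mult-rem}.
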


\begin{proof}
Differentiation under the integral sign of \eqref{Mdk} is permitted due to exponential convergence, so we have
\begin{equation} \label{DMdk}
    \D^\alpha M^{(d)}_k(\mu) = \frac{(2\alpha-1)!!}{2^{|\alpha|}} \frac{1}{2\pi\ii} \oint_{C_1} z^k \e^z \prod_{j=1}^d  (z-\mu_j)^{-\frac12-\alpha_j} \d z.
\end{equation}
The integrand has a similar form, so the rest of the derivation is identical to Theorem \ref{thm:Mdk-asymp-mult}. We only briefly list the outline.

First, we extract the factors
\[\prod_{j=1}^s (-\nu_j)^{-\frac12-\varpi_j} = (-\nu)^{-\varpi-\frac12} = \llbracket \nu \rrbracket_{\varpi+\frac12},\]
from the product, and expand $\prod_{j=1}^s (1+\frac{z}{-\nu_j})^{-\frac12-\varpi_j}$ with respect to $z$. The result is composed of the asymptotic series in \eqref{DMdk-asymp-mult}, and a remainder of the form
\begin{equation} \label{rem-adskn-expr}
    \D^{\alpha}R^{(d,s)}_{k,n}(\mu)
    = \frac{(2\alpha-1)!!}{2^{|\alpha|}}
    \frac{\llbracket \nu \rrbracket_{\varpi+\frac12}}{2\pi\ii}
    \oint_{C_1} z^k\e^z \prod_{j=1}^{d-s}(z-\mu_j)^{-\frac12-\alpha_j} r_{\varpi+\frac12;n}(z;-\nu)\d z.
\end{equation}
We note that \eqref{rem-adskn-expr} is indeed the derivative of $R^{(d,s)}_{k,n}$ \eqref{rem-dskn-expr}, since the series \eqref{DMdk-asymp-mult} happens to be the term-wise differentiation of \eqref{Mdk-asymp-mult}.

Then, we estimate \eqref{rem-adskn-expr} with similar methods as before.
\begin{align*}
    &\ \left| \D^{\alpha}R^{(d,s)}_{k,n}(\mu) \right| \\
    &\le \underbrace{\frac{(2\alpha-1)!! \llbracket \nu \rrbracket_{\varpi+\frac12} }{2^{|\alpha|}} }_{\text{Coefficient of \eqref{rem-adskn-expr}}}
    \cdot \underbrace{ \frac{\llbracket\mu'\rrbracket_{\alpha'+\frac12}}{2\pi(\sin\theta)^{\frac{d-s}{2} +|\alpha'| }} }_{\text{Same as \eqref{sqrt-dist-on-C1}}} \\
    &\hspace{40pt}
    \cdot \underbrace{ \frac{K_1(\varpi+\frac12)(n+2)^{|\varpi|+\frac{s}{2}-1}}{(\sin\theta)^{n+|\varpi|+\frac{s}{2}+1}} \sum_{|\beta|=n+1} \frac{1}{(-\nu)^\beta} }_{\text{Same as \eqref{pow-rem-on-C1}}}
    \cdot \oint_{C_1} |z|^{k+n+1}|\e^z| |{\d z}| \\
    &\le \frac{6\e (2\alpha-1)!! K_1(\varpi+\frac12)}{2\pi\cdot 2^{|\alpha|} (\sin\theta)^{|\alpha|+1} \cos^2\theta} 
    \frac{(n+2)^{|\varpi|+\frac{s}{2}-1} (n+k+1)!}{(\sin\theta)^{n+\frac{d}{2}}(\cos\theta)^{k+n}}
    \cdot \llbracket\mu\rrbracket_\alpha
    \sum_{|\beta|=n+1} \frac{1}{(-\nu)^\beta}.
\end{align*}
Letting $\theta=\frac{\pi}{4}$, we obtain the estimate \eqref{DMdk-asymp-mult-rem}, with the leading chunk of constants identified with $K_3$ \eqref{K3-const}.
\end{proof}

\subsection{Convergent limit of asymptotic series}

J.~Kent noticed in his paper \cite{kent_asymptotic_1987} that in the 3D Bingham normalizing constant with $s$ unbounded eigenvalues ($s=1,2$), the asymptotic series he obtained was absolutely convergent when $s=1$, an ``unusual property for an asymptotic series.'' We now explain and generalize this phenomenon with the integral representation \eqref{Zd-hankel}.

Recall from Corollary \ref{cor:Zd-real-int} that \emph{when there are an even number of eigenvalues, the range of integration is actually finite}. It is then natural that the expansion \eqref{pow-expand} under the integral sign should converge.
We exploit this important observation and state the following general result on $\D^\alpha M^{(d)}_k$. 

\begin{theorem} \label{thm:uni-conv-even}
Assume that $0=\mu_1\ge\cdots\ge\mu_d$, and use the same notation in Corollary \ref{cor:DMdk-asymp-mult}.
If $d-s=2t$ is even, then the degree $n$ in the series \eqref{DMdk-asymp-mult} (and likewise \eqref{Mdk-asymp-mult}) can be put to $\infty$ as long as $\nu_1<\mu_{d-s}$:
\begin{equation} \label{DMdk-asymp-infty}
    \D^\alpha M^{(d)}_k(\mu) = \frac{1}{(-\nu)^{\varpi+\frac12}} \sum_{\beta\in\BbbN^s} \frac{(-1)^{|\beta|}(2\beta+2\varpi-1)!!}{2^{|\beta|+|\varpi|}\beta!} \frac{\D^{\alpha'} M^{(d-s)}_{k+|\beta|}(\mu')}{(-\nu)^{\beta}} + \D^\alpha R^{(d,s)}_{k,\infty}(\mu).
\end{equation}
The summation on the RHS is absolutely convergent, and the remainder term satisfies that
\begin{equation} \label{Mdk-rem-infty}
    \left| \D^\alpha R^{(d,s)}_{k,\infty} \right| \le K_4(k,d,\alpha,\delta) (|\nu_1|+1)^k \e^{-|\nu_1|}
\end{equation}
uniformly for all $\nu_1\le \mu_{d-s}-2\delta$.
\end{theorem}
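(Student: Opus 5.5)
The plan is to exploit the cancellation of branch cuts observed before Corollary \ref{cor:Zd-real-int}, and to split the Hankel-type contour in \eqref{DMdk} into two pieces. The first is a \emph{finite} closed loop $\Gamma$ around $[\mu_{d-s},0]$. The second is a Hankel-type contour wrapped around $(-\infty,\nu_1]$. Since $d-s=2t$ is even, every $x\in(\nu_1,\mu_{d-s})$ lies below exactly $2t$ of the points $\mu_j$. Each factor $(z-\mu_j)^{-\frac12-\alpha_j}$ differs from $(z-\mu_j)^{-\frac12}$ only by an integer power, so its jump across the cut is still a sign flip. Hence the integrand $z^k\e^z\prod_{j=1}^d (z-\mu_j)^{-\frac12-\alpha_j}$ is analytic on the segment $(\nu_1,\mu_{d-s})$. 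Arguing as in Figure \ref{fig:split-contour}, with vertical connecting segments that cancel, I will write
\[
\D^\alpha M^{(d)}_k=\frac{(2\alpha-1)!!}{2^{|\alpha|}}\frac{1}{2\pi\ii}\Big(\oint_\Gamma+\oint_{\nu_1+C_\delta}\Big) z^k\e^z\prod_{j=1}^d (z-\mu_j)^{-\frac12-\alpha_j}\d z .
\]
Here $\Gamma$ is a loop crossing the real axis at $\mu_{d-s}-\delta$ and at some point to the right of $0$, contained in the disc $|z|\le|\mu_{d-s}|+\delta$. The contour $C_\delta$ is a fixed keyhole consisting of a circle of radius $\delta$ and two horizontal rays at height $\pm\delta$.

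On $\Gamma$ I factor out $(-\nu)^{-\varpi-\frac12}$ and expand $\prod_{j=1}^s(1+\frac{z}{-\nu_j})^{-\frac12-\varpi_j}$ in its full multivariate binomial series, as in \eqref{pow-mult-der}. Because $|\nu_j|\ge|\nu_1|\ge|\mu_{d-s}|+2\delta$, we have
\[
\frac{|z|}{|\nu_j|}\le q:=\frac{|\mu_{d-s}|+\delta}{|\mu_{d-s}|+2\delta}<1 \quad\text{on }\Gamma .
\]
The series is therefore dominated termwise by the majorant $\prod_j(1-|z|/|\nu_j|)^{-\frac12-\varpi_j}$, which converges absolutely and uniformly on $\Gamma$. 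This justifies integrating term by term. The $\beta$-th term is then a loop integral of $z^{k+|\beta|}\e^z\prod_{j\le d-s}(z-\mu_j)^{-\frac12-\alpha_j}$. This lower-dimensional integrand is analytic on $(-\infty,\mu_{d-s})$, again because $d-s$ is even, so $\Gamma$ may be reopened into a Hankel-type contour by Proposition \ref{prop:hankel-int}. Comparing with \eqref{DMdk} in dimension $d-s$ identifies the term with $\D^{\alpha'}M^{(d-s)}_{k+|\beta|}(\mu')$, together with exactly the coefficients of \eqref{DMdk-asymp-infty}. The same majorant bounds the coefficient series and gives its absolute convergence. I therefore define $\D^\alpha R^{(d,s)}_{k,\infty}$ as the integral over $\nu_1+C_\delta$.

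For the remainder I substitute $z=\nu_1+w$ with $w\in C_\delta$. This gives $|\e^z|=\e^{-|\nu_1|}\e^{\Re w}$ and $|z|^k\le(|\nu_1|+|w|)^k\le(|\nu_1|+1)^k(1+|w|)^k$. For $j\le d-s$ we have $\mu_j-\nu_1\ge2\delta$, hence $|z-\mu_j|\ge\delta$. For the $\nu$-factors, $\nu_j-\nu_1\le0$, so $w$ stays at distance at least $\delta$ from the point $\nu_j-\nu_1$ on the negative axis, hence $|z-\nu_j|\ge\delta$; this covers the case $\nu_j=\nu_1$. All algebraic factors are thus bounded by powers of $\delta^{-1}$, independently of $\mu$. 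Integrating $(1+|w|)^k\e^{\Re w}$ along $C_\delta$ gives a constant depending only on $k$ and $\delta$, exactly as in Lemma \ref{lem:int-C1-est}. The result is $|\D^\alpha R^{(d,s)}_{k,\infty}|\le K_4(k,d,\alpha,\delta)(|\nu_1|+1)^k\e^{-|\nu_1|}$. The prefactor $(-\nu)^{-\varpi-\frac12}$ can be kept or discarded, since it is at most $1$ when $|\nu_j|\ge1$; if $|\nu_1|<1$ it is absorbed into the constant.

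I expect the main obstacle to be the contour splitting itself. One must check carefully that, with the principal-branch convention of Theorem \ref{thm:Z-lt}, the boundary values of the full integrand agree on $(\nu_1,\mu_{d-s})$, so that the connecting segments genuinely cancel. One must also verify that the reopened lower-dimensional loop reproduces precisely \eqref{DMdk} for $\mu'$. A second point to watch is that all estimates on $\nu_1+C_\delta$ remain uniform when $\mu_{d-s}$ or several $\nu_j$ crowd near $\nu_1$. The separation hypothesis $\nu_1\le\mu_{d-s}-2\delta$ is exactly what guarantees this.
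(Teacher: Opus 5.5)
Your proposal is correct and follows essentially the same route as the paper. You split the contour across the analytic gap $(\nu_1,\mu_{d-s})$ into two pieces. The first is a finite loop around $[\mu_{d-s},0]$, where the binomial series of $\prod_j(1+z/(-\nu_j))^{-\frac12-\varpi_j}$ converges uniformly, is integrated termwise, and the loop is reopened to identify $\D^{\alpha'}M^{(d-s)}_{k+|\beta|}(\mu')$. The second is a Hankel-type contour around $(-\infty,\nu_1]$, whose integral is the remainder, bounded by $(|\nu_1|+1)^k\e^{-|\nu_1|}$.

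The only difference is cosmetic. For the remainder you use a keyhole of radius $\delta$ centred at $\nu_1$, while the paper uses $C_{11}$, with a vertical segment at $\Re z=\nu_1+\delta$ and rays at height $\pm1$. Both give $|z-\mu_j|\ge\delta$ uniformly in $\mu$.
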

\begin{proof} 
We use the original Hankel-type contour $C$ with constant imaginary part (Figure \ref{fig:keyhole}) as the path of integration in \eqref{DMdk}.
The integrand is analytic on the interval $(\nu_1,\mu_{d-s})$ because there are an even number of negative square roots and the branch cuts cancel out. Hence, we can split the contour $C$ across this interval into two loops $C_{10},C_{11}$, and deform them appropriately such that $C_{10}$ closely encircles the line segment $[\mu_{d-s},\mu_1]$, and $C_{11}$, parametrized by
\begin{equation} \label{contour-C11}
C_{11}(t) = \begin{cases}
    \nu_1+\delta-\ii+(t+1), & t<-1, \\
    \nu_1+\delta+\ii t, & -1\le t \le 1, \\
    \nu_1+\delta+\ii-(t-1), & t>1,
\end{cases}
\end{equation}
is composed of two horizontal rays going to $-\infty$ and one vertical segment connecting their endpoints $\nu_1+\delta\pm\ii$ (see Figure \ref{fig:contour-split-even}).
Then, the integral \eqref{DMdk} can be written as\small
\begin{align*}
    \D^\alpha M^{(d)}_k 
    &= \frac{(2\alpha-1)!!}{2^{|\alpha|}(-\nu)^{\varpi+\frac12}} \frac{1}{2\pi\ii} \qty(\oint_{C_{10}} + \oint_{C_{11}}) z^k \e^z \prod_{j=1}^{d-s} (z-\mu_j)^{-\frac12-\alpha_j} \prod_{j=1}^s \qty(1+\frac{z}{-\nu_j})^{-\frac12-\varpi_j} \d z \\
    &\triangleq I_{10}+I_{11},
\end{align*}\normalsize
where $I_{10}$ and $I_{11}$ denote the integrals on $C_{10}$ and $C_{11}$, respectively.

\begin{figure}
\centering
\begin{tikzpicture}[decoration={markings, 
    mark=at position 0.55 with {\arrow{stealth}}}, line cap=round]
\draw[gray,->](-4.2,0)--(2,0);
\draw[gray,->](0,-1.6)--(0,1.6);
\draw[line width=2pt,opacity=0.5](-.8,0)--(0,0);
\draw[line width=2pt,opacity=0.5](-1.3,0)--(-1.5,0);
\draw[line width=2pt,opacity=0.5](-2.5,0)--(-2.9,0);
\draw[line width=2pt,opacity=0.5](-3.2,0)--(-3.8,0);
\draw[line width=2pt,opacity=0.5](-4,0)--(-4.2,0);
\fill(0,0) circle[radius=2pt];
\fill(-0.8,0) circle[radius=2pt];
\fill(-1.3,0) circle[radius=2pt];
\fill(-1.5,0) circle[radius=2pt];
\fill(-2.5,0) circle[radius=2pt];
\fill(-2.9,0) circle[radius=2pt];
\fill(-3.2,0) circle[radius=2pt];
\fill(-3.8,0) circle[radius=2pt];
\fill(-3.8,0) circle[radius=2pt];
\fill(-4,0) circle[radius=2pt];
\draw[blue,thick,postaction=decorate] (-1.5,-.2)--(0,-.2);
\draw[blue,thick,postaction=decorate] (0,-.2) arc[radius=0.2,start angle =-90,end angle=90];
\draw[blue,thick,postaction=decorate] (0,.2)node[anchor=south east]{$C_{10}$}--(-1.5,.2);
\draw[blue,thick,postaction=decorate] (-1.5,.2) arc[radius=0.2,start angle =90,end angle=270];
\draw[blue,thick,postaction=decorate] (-4.3,-1)--(-2.3,-1);
\draw[blue,thick,postaction=decorate] (-2.3,-1)--(-2.3,1);
\draw[blue,thick,postaction=decorate] (-2.3,1)node[anchor=north east]{$C_{11}$}--(-4.3,1);
\draw[red,thick,postaction=decorate] (-4,-1.2)--(0,-1.2);
\draw[red,thick,postaction=decorate] (0,-1.2) arc[radius=1.2,start angle =-90,end angle=90];
\draw[red,thick,postaction=decorate] (0,1.2)--(-4,1.2);
\draw[red] (1.2,0) node[anchor=south west]{$C$};
\draw[thick,dashed] (-1.5,-1.2)--(-1.5,-.2);
\draw[thick,dashed] (-1.5,1.2)--(-1.5,.2);
\draw[thick,dashed] (-2.5,-1.2)--(-2.5,-1);
\draw[thick,dashed] (-2.5,1.2)--(-2.5,1);
\end{tikzpicture}
\caption{Splitting the contour $C$ when there are an even number of bounded eigenvalues (4 as shown)} \label{fig:contour-split-even}
\end{figure}

To compute $I_{10}$, we follow the previous approaches to expand the integrand in terms of $z$. 
When $\nu_1<\mu_{d-s}-2\delta$, we can shrink $C_{10}$ small enough so that
\begin{equation}
    |z|<|\nu_j|, \ \forall z\in C_{10},j=1,\ldots,s.
\end{equation}
Thus, the Taylor series of $\prod_{j=1}^s (1+\frac{z}{-\nu_j})^{-\frac12-\varpi_j}$ is uniformly absolutely convergent over $C_{10}$, and we have that\small
\begin{align*}
    &\ I_{10} = \frac{(2\alpha-1)!!}{2^{|\alpha|} (-\nu)^{\varpi+\frac12}} \sum_{\beta\in\BbbN^s} \frac{(-1)^{|\beta|}(2\beta+2\varpi-1)!!}{2^{|\beta|}\beta! (2\varpi-1)!!(-\nu)^\beta} \frac{1}{2\pi\ii}\int_{C_{10}} z^{k+|\beta|}\e^z \prod_{j=1}^{d-s}(z-\mu_j)^{-\frac12-\alpha_j}\d z \\
    &=\frac{1}{(-\nu)^{\varpi+\frac12}}
    \sum_{\beta\in\BbbN^s} \frac{(-1)^{|\beta|}(2\beta+2\varpi-1)!!}{2^{|\beta|+|\varpi|}\beta! (-\nu)^\beta}
    \frac{1}{2\pi\ii}\int_{C_{10}} z^{k+|\beta|}\e^z \prod_{j=1}^{d-s}\frac{(2\alpha_j-1)!!}{2^{\alpha_j}}(z-\mu_j)^{-\frac12-\alpha_j}\d z,
\end{align*}\normalsize
where the leading coefficient $\frac{(2\alpha-1)!!}{2^{|\alpha|}}$ is factorized into $\frac{(2\varpi-1)!!}{2^{|\varpi|}} \cdot \frac{(2\alpha'-1)!!}{2^{|\alpha'|}}$, and moved into each summand.
The integral expressions in the series above only differ from $\D^{\alpha'} M^{(d-s)}_{k+|\beta|}(\mu')$ \eqref{DMdk} by the paths of integration.
However, since $d-s$ is even, $\prod_{j=1}^{d-s}(z-\mu_j)^{-\frac12}$ is analytic on $(-\infty,\mu_{d-s})$ and the integrand of \eqref{DMdk} is exponentially convergent at $-\infty$, we can deform the Hankel-type contour $C_1$ into $C_{10}$ by closing it somewhere on $(-\infty,\mu_{d-s})$ without changing the value of the integral (similar to Figure \ref{fig:closed-contour}). This indicates that the integral expressions of the summands of $I_{10}$ are actually identical to $\D^{\alpha'} M^{(d-s)}_{k+|\beta|}(\mu')$, giving us the series
\begin{equation}
    I_{10} = \frac{1}{(-\nu)^{\varpi+\frac12}}\sum_{\beta\in\BbbN^s} \frac{(-1)^{|\beta|}(2\beta+2\varpi-1)!!}{2^{|\beta|+|\varpi|}\beta! } \frac{\D^{\alpha'} M^{(d-s)}_{k+|\beta|}(\mu')}{(-\nu)^\beta}.
\end{equation}
This indicates that the absolutely convergent limit of the sum in \eqref{DMdk-asymp-mult} as $n\to\infty$ happens to be $I_{10}$.

Consequently, the remainder $\D^\alpha R^{(d,s)}_{k,\infty}$ should be identified with $I_{11}$. 
We have that
\begin{align*}
    |I_{11}| &\le \frac{(2\alpha-1)!!}{2\pi\cdot 2^{|\alpha|}} \left(\int_{-\infty}^{\nu_1+\delta-\ii} + \int_{\nu_1+\delta-\ii}^{\nu_1+\delta+\ii} + \int_{\nu_1+\delta+\ii}^{-\infty}\right) |z|^k |\e^z| \prod_{j=1}^d|z-\mu_j|^{-\frac12-\alpha_j} |{\d z}| \\
    &\triangleq \frac{(2\alpha-1)!!}{2\pi\cdot 2^{|\alpha|}} (J_1+J_2+J_3).
\end{align*}
where the first and third integral paths refer to the rays going to $-\infty$, and the second refers to the vertical line segment. On the one hand, we bound $J_1$ by
\begin{align*}
    |J_1| &\le \int_0^\infty \qty(|\nu_1+\delta|+1+t)^k \e^{\nu_1+\delta-t} \d t \\
    &= \e^{-|\nu_1|+\delta} \sum_{j=0}^k \binom{k}{j} (|\nu_1+\delta|+1)^j \int_0^\infty t^{k-j} \e^{-t}\d t \\
    &=k! \e^{-|\nu_1|+\delta} \sum_{j=0}^k \frac{(|\nu_1+\delta|+1)^j}{j!} \\
    &\le k! \e^{1+\delta} (|\nu_1|+1)^k \cdot \e^{-|\nu_1|},
\end{align*}
which also applies to $|J_3|$ by symmetry.
On the other hand, $J_2$ is bounded by
\[|J_2| \le 2 \cdot \frac{\e^{-|\nu_1|+\delta} (|\nu_1+\delta|+1)^k}{\delta^{\frac{d}{2}+|\alpha|}}
= \frac{2\e^\delta (|\nu_1|+1)^k}{\delta^{\frac{d}{2}+|\alpha|}} \e^{-|\nu_1|}.\]
since $|z|\le |\nu_1|+1, |\e^z|\le \e^{\nu_1+\delta},$ and $|z-\mu_j|\ge|z-\nu_1|\ge \delta$ on the vertical line. 
Combining the estimates above, we finally obtain
\begin{equation}
    |I_{11}| \le \frac{(2\alpha-1)!!}{2\pi\cdot 2^{|\alpha|}} \qty(2k!\e^{1+\delta} + \frac{2\e^\delta}{\delta^{\frac{d}{2}+|\alpha|}}) (|\nu_1|+1)^k \e^{-|\nu_1|},
\end{equation}
which shrinks to 0 exponentially with respect to $\nu_1\to-\infty$. 
One obtains \eqref{Mdk-rem-infty} by setting the coefficients to $K_4(k,d,\alpha,\delta)$.
\end{proof}
\begin{remark}
It should be noted that even though the asymptotic series \eqref{DMdk-asymp-infty} is uniformly and absolutely convergent, it does not converge to the LHS in the presence of the remainder term, which is a non-trivial contour integral over $C_{11}$. 
This fact can be compared with the ``flat function'' $f(x)=\e^{-x^{-2}}$, whose Taylor expansion at $x=0$ converges, but the limit does not equal $f(x)$ itself.
\end{remark}

\section{Entropy decomposition in Bingham closure}

In this section, we apply the asymptotic theory to the Bingham closure problem. We first introduce the Bingham moments and their asymptotic estimates. After that, we study the Bingham closure and the expression of the entropy \eqref{bingham-ent} in terms of second-order moments. Finally, we prove the main result of entropy decomposition, which generalizes a previous result by the authors \cite{shi_molecular_2026} into arbitrary dimensions.

\subsection{Bingham moments}

Consider the Bingham distribution $\mathrm{Bing}(\mu)$ (\eqref{bingham} with diagonal parameter $B=\diag(\mu)$). Since $f(m;\mu)$ is antipodally symmetric, only moments of even orders are non-trivial. Recalling the expression for $\D^\alpha Z$ \eqref{DZd-mu}, we define the \textbf{Bingham moments} as
\begin{equation} \label{bing-moments}
    z_\alpha = \BbbE^\mu(m^{2\alpha}) = \frac{\D^\alpha Z}{Z},
\end{equation}
where $\alpha\in\BbbN^d$ is a multi-index, and $\BbbE^\mu$ denotes the expectation under $\mathrm{Bing}(\mu)$. Evidently, all moments are strictly positive.
For lower-order $\alpha$, we also use the following abbreviations:
\begin{align*}
    z_j &= z_{e_j} = \frac{1}{\omega_d Z} \int_{\BbbS^{d-1}} m_j^2 \e^{\sum_j \mu_j m_j^2}\d m, \\
    z_{jk}&=z_{e_j+e_k} = \frac{1}{\omega_d Z} \int_{\BbbS^{d-1}} m_j^2 m_k^2 \e^{\sum_j \mu_j m_j^2}\d m.
\end{align*}
We also denote by
\begin{equation}
    z=[z_1,\ldots,z_d]^T
\end{equation}
the vector formed by the second-order moments, which, since $\sum_j m_j^2=1$, satisfies the constraint that
\begin{equation} \label{sum-zj-is-1}
    \sum_{j=1}^d z_j = 1.
\end{equation}

The moments are central in the study of Bingham distributions \cite{ball_nematic_2010,chen_maximum_2021,de_gennes_physics_1993,kirschvink_least-squares_1980}, where the common approach is to collect moments from experimental observations, and then resolve the underlying parameter $\mu$ by an inversion known as the Bingham closure.

\subsection{Asymptotic profile of Bingham moments}

First, we need to study the asymptotic properties of the Bingham moments, where the special case in three dimensions was analyzed by Kent \cite{kent_asymptotic_1987}, and we generalize it to arbitrary dimensions.

To distinguish between dimensions, we restore the superscript $(d)$ for the function $Z$ and its relatives in dimension $d$.
We begin with a crude estimate that restricts the decay rate of $\D^\alpha Z^{(d)}$ to the same order as the modulus $\llbracket \mu \rrbracket_{\alpha+\frac12}$.
In particular, the bounds here are \emph{independent of the number of unbounded eigenvalues,} as distinct from the asymptotic series \eqref{DMdk-asymp-mult} and \eqref{Mdk-asymp-mult}.

\begin{lemma} \label{lem:DZ-order}
Suppose that the parameters are nonpositive and nonincreasing:
\[0=\mu_1\ge\cdots\ge \mu_d.\]
For any $\alpha\in\BbbN^d$, there exist positive constants $K_5^+>K_5^->0$ (dependent on $d,\alpha$) such that
\begin{equation} \label{DZ-order}
    K_5^-\le \frac{ \D^\alpha Z^{(d)}(\mu)}{\llbracket \mu \rrbracket_{\alpha+\frac12} } \le K_5^+,
\end{equation}
with the modulus $\llbracket \cdot \rrbracket$ defined in \eqref{w-alpha}.
\end{lemma}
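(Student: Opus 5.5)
The two inequalities in \eqref{DZ-order} call for different tools. For the upper bound I will use the contour representation with the explicit remainder machinery. For the lower bound I will work directly with the spherical integral \eqref{DZd-mu}, restricted to a small box adapted to the scales $\max\{1,|\mu_j|\}^{-1/2}$. In both directions I will take care that no constant depends on the relative sizes of the $\mu_j$.

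\emph{Upper bound.} By \eqref{Zd-is-Md0} and \eqref{DMdk} with $k=0$, I write
\[
\D^\alpha Z^{(d)}(\mu)=\Gamma(\tfrac d2)\frac{(2\alpha-1)!!}{2^{|\alpha|}}\frac{1}{2\pi\ii}\oint_{C_1}\e^z\prod_{j=1}^d(z-\mu_j)^{-\frac12-\alpha_j}\d z,
\]
with $C_1$ the contour \eqref{contour-C1} and $\theta=\frac\pi4$. The geometric bound used for \eqref{sqrt-dist-on-C1} is $|z-\mu_j|\ge\sin\theta\max\{1,|\mu_j|\}$ on $C_1$. It holds for every $j$, including $j=1$ where $\mu_1=0$, because $C_1$ stays at distance at least $1$ from the origin. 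This gives
\[
\prod_j|z-\mu_j|^{-\frac12-\alpha_j}\le(\sin\theta)^{-|\alpha|-\frac d2}\llbracket\mu\rrbracket_{\alpha+\frac12}.
\]
Lemma \ref{lem:int-C1-est} with $n=0$ bounds $\oint_{C_1}|\e^z||\d z|\le 6\e/\cos\theta$. Together these yield $K_5^+$, which depends only on $d$ and $\alpha$.

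\emph{Lower bound.} I will use \eqref{DZd-mu}. The integrand is nonnegative, so it suffices to bound it from below on a subset. Set $a_j=c\max\{1,|\mu_j|\}^{-1/2}$ for $j=2,\dots,d$, with $c=(2d)^{-1/2}$, so that $\sum_{j\ge2}a_j^2\le\frac12$. Let $U$ be the set of $m\in\BbbS^{d-1}$ with $m_1>0$ and $a_j/2\le m_j\le a_j$ for $j\ge2$.

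On $U$ I have the following estimates.
\begin{enumerate}[(i)]
\item Since $m_1^2\ge\frac12$, $\sum_{j\ge 2}\mu_jm_j^2\ge-\sum_{j\ge2}|\mu_j|a_j^2\ge -(d-1)c^2\ge-\frac12$, so $\e^{\sum\mu_jm_j^2}\ge\e^{-1/2}$.
\item $m^{2\alpha}\ge 2^{-\alpha_1}\prod_{j\ge2}(a_j/2)^{2\alpha_j}$.
\item Parametrizing the upper hemisphere as a graph $m_1=\sqrt{1-\sum_{j\ge2}m_j^2}$ over $(m_2,\dots,m_d)$, the surface element is at least the Lebesgue measure. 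Hence $|U|\ge\prod_{j\ge2}(a_j/2)$.
\end{enumerate}
Multiplying these gives $\D^\alpha Z^{(d)}\ge\omega_d^{-1}\e^{-1/2}2^{-\alpha_1}\prod_{j\ge2}(a_j/2)^{2\alpha_j+1}$. Since $\max\{1,|\mu_1|\}=1$, this is a constant multiple of $c^{2|\alpha|+d-1}\llbracket\mu\rrbracket_{\alpha+\frac12}$, which gives $K_5^-$.

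The only delicate point is the uniformity: the bounds must hold regardless of which, or how many, of the $\mu_j$ are large. In the upper bound this is automatic, because the distance estimate on $C_1$ is uniform in each $\mu_j$ separately. In the lower bound it is handled by the scale-adapted box, whose exponent bound uses only $|\mu_j|a_j^2\le c^2$. Finally, $K_5^->0$ is clear, and $K_5^+>K_5^-$ holds after enlarging $K_5^+$ if needed.
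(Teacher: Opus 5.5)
Your argument is correct, and it follows a genuinely different route from the paper's.

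\textbf{The paper's route.} It proves the lemma by induction on $d$. On the compact set $\{0=\mu_1\ge\cdots\ge\mu_d\ge -X\}$ it gets two-sided bounds from continuity and strict positivity of \eqref{DZd-mu}. For $\mu_d<-X$ it uses two ingredients:
\begin{itemize}
\item the order-zero expansion of Corollary~\ref{cor:DMdk-asymp-mult} with $s=1$, whose remainder is bounded by a constant times $\llbracket\mu\rrbracket_{\alpha+\frac12}/|\mu_d|$ uniformly in $\mu'$;
\item the induction hypothesis for $\D^{\alpha'}Z^{(d-1)}(\mu')$.
\end{itemize}
The threshold $X$ is then chosen large enough that this remainder cannot spoil the lower bound.

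\textbf{Your route.} You bypass induction, compactness and the expansion altogether.
\begin{itemize}
\item Your upper bound is precisely the contour estimate that sits inside $K_3$, applied to \eqref{DMdk} with no Taylor expansion. Your observation that $|z-\mu_j|\ge\sin\theta\max\{1,|\mu_j|\}$ on $C_1$ holds for every $\mu_j\le 0$, including $\mu_1=0$, is exactly what makes it uniform.
\item Your lower bound is an elementary real-variable localization of \eqref{DZd-mu} to a box of side $\asymp\max\{1,|\mu_j|\}^{-1/2}$ in each coordinate. On that box, $|\mu_j|a_j^2\le c^2$ keeps the exponential factor above $\e^{-1/2}$, no matter which or how many $\mu_j$ are large.
\end{itemize}

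\textbf{What each approach buys.} Your proof gives fully explicit constants $K_5^\pm$ and does not rely on the remainder theory at all. By contrast, the paper's $c_1,c_2$ come from a compactness argument, and its $K_5^-$ depends on a threshold $X$ fixed in terms of $K_5^-(d-1,\alpha')$. The paper's route, in turn, showcases the uniformity of its expansions. Its template of a compact region plus a lower-dimensional leading term is reused in the proof of Theorem~\ref{thm:Sq-asymp}.
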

\begin{proof}
We induct on the dimensionality $d$. The case in $d=1$ is trivial as $Z^{(1)}(\mu_1)=\e^{\mu_1}=1$.
Supposing that the result is correct for $d-1$, we prove \eqref{DZ-order} for $d\ge 2.$ The general idea is to split all $\mu$'s into a compact set and its complement, and apply asymptotic expansions to the latter.

First, if $\mu_d\ge -X$ for some $X>1$, then by the explicit expression \eqref{DZd-mu} we have that
\[\D^\alpha Z^{(d)}(\mu)\ge c_X >0,\]
because $m^{2\alpha}$ is nonzero almost everywhere on $\BbbS^{d-1}$.
As $\llbracket \mu \rrbracket_{\alpha+\frac12}$ is also a strictly positive continuous function, there exist constants $c_1(X,d,\alpha)$ and $c_2(X,d,\alpha)$ such that 
\begin{equation} \label{DZ-order-bnd}
    c_2(X,d,\alpha) \le \frac{\D^\alpha Z^{(d)}(\mu)}{\llbracket \mu \rrbracket_{\alpha+\frac12}} \le c_1(X,d,\alpha),
\end{equation}
provided that $0\ge\mu_1\ge\cdots\ge\mu_d\ge -X.$

Then, if $\mu_d<-X<-1$, we use the asymptotic expansion \eqref{DMdk-asymp-mult} with respect to $\mu_d$ at order $0$ to get
\begin{equation} \label{DZ-asymp-0}
    \D^\alpha Z^{(d)}(\mu) = \frac{\Gamma(\frac{d}{2})}{\Gamma(\frac{d-1}{2})}\frac{(2\alpha_d-1)!!}{2^{\alpha_d}} \llbracket \mu_d \rrbracket_{\alpha_d+\frac12} \D^{\alpha'} Z^{(d-1)}(\mu') + \Gamma(\tfrac{d}{2}) \D^\alpha R^{(d,1)}_{0,0}(\mu),
\end{equation}
where $\mu'=[\mu_1,\ldots,\mu_{d-1}]^T, \alpha'=(\alpha_1,\ldots,\alpha_{d-1})$, and we used the relation $Z^{(d)}=\Gamma(\frac{d}{2})M_0^{(d)}$ \eqref{Zd-is-Md0} to translate between $M_0$ and $Z$. We also have the error estimate \eqref{DMdk-asymp-mult-rem}:
\begin{equation} \label{DZ-asymp-0-rem}
    \left|\Gamma(\tfrac{d}{2})\D^\alpha R^{(d,1)}_{0,0}\right| \le \Gamma(\tfrac{d}{2}) K_3(0,0,d,1,\alpha) \frac{\llbracket\mu \rrbracket_{\alpha+\frac12}}{|\mu_d|} \le c_3(d,\alpha) \frac{\llbracket\mu \rrbracket_{\alpha+\frac12}}{X}.
\end{equation}
Note carefully that the remainder estimate \eqref{DMdk-asymp-mult-rem} is \emph{a uniform bound provided $\mu_d\le -1$}, so the coefficient $c_3$ in \eqref{DZ-asymp-0-rem} does not depend on the remaining eigenvalues $\mu'$ even if they are unbounded.

Using the induction hypothesis on $\D^{\alpha'} Z^{(d-1)}(\mu')$, we find that
\begin{align*}
    \D^\alpha Z^{(d)} &\le \frac{\Gamma(\frac{d}{2})}{\Gamma(\frac{d-1}{2})} \frac{(2\alpha_d-1)!!}{2^{|\alpha_d|}} \llbracket \mu_d \rrbracket_{\alpha_d+\frac12} \cdot K_5^+(d-1,\alpha') \llbracket\mu' \rrbracket_{\alpha'+\frac12} + c_3(d,\alpha) \frac{\llbracket\mu \rrbracket_{\alpha+\frac12}}{X}, \\
    \D^\alpha Z^{(d)} &\ge \frac{\Gamma(\frac{d}{2})}{\Gamma(\frac{d-1}{2})} \frac{(2\alpha_d-1)!!}{2^{|\alpha_d|}} \llbracket \mu_d \rrbracket_{\alpha_d+\frac12} \cdot K_5^-(d-1,\alpha') \llbracket\mu' \rrbracket_{\alpha'+\frac12} - c_3(d,\alpha) \frac{\llbracket\mu \rrbracket_{\alpha+\frac12}}{X}.
\end{align*}
We can extract the factors $\llbracket\mu \rrbracket_\alpha= \llbracket\mu' \rrbracket_{\alpha'+\frac12}\llbracket \mu_d \rrbracket_{\alpha_d+\frac12}$ (by the factorization property \eqref{w-alpha-div}), and obtain
\begin{subequations} \label{DZ-order-ubnd}
\begin{equation}
    c_5(d,\alpha) \llbracket\mu \rrbracket_{\alpha+\frac12} \le \D^\alpha Z^{(d)} \le c_4(d,\alpha) \llbracket\mu \rrbracket_{\alpha+\frac12}, 
\end{equation}
where
\begin{equation}
\begin{aligned}
    c_4 &= \frac{\Gamma(\frac{d}{2})}{\Gamma(\frac{d-1}{2})}\frac{(2\alpha_d-1)!! K_5^+(d-1,\alpha')}{2^{|\alpha_d|}} + \frac{c_3(d,\alpha)}{X}, \\
    c_5 &= \frac{\Gamma(\frac{d}{2})}{\Gamma(\frac{d-1}{2})}\frac{(2\alpha_d-1)!! K_5^-(d-1,\alpha')}{2^{|\alpha_d|}} - \frac{c_3(d,\alpha)}{X}.
\end{aligned}
\end{equation}
\end{subequations}
Therefore, we should choose $X>\frac{\Gamma(\frac{d-1}{2})}{\Gamma(\frac{d}{2})}\frac{2^{|\alpha_d|} c_3(d,\alpha)}{(2\alpha_d-1)!! K_5^-(d-1,\alpha')}$ at the beginning to make $c_5$ positive.

Finally, combining the estimates \eqref{DZ-order-bnd} and \eqref{DZ-order-ubnd} gives us \eqref{DZ-order}, with
\[K_5^+(d,\alpha)=\max\{c_1,c_4\},\ 
K_5^-(d,\alpha)=\min\{c_2,c_5\}.\]
\end{proof}

Lemma \ref{lem:DZ-order} immediately yields lower and upper bounds on $z_\alpha^{(d)}$: by dividing the estimates \eqref{DZ-order} for orders $\alpha$ and $0$, we get
\begin{equation} \label{za-order}
    \frac{K_5^-(d,\alpha)}{K_5^+(d,0)} \le \frac{z_\alpha^{(d)}}{\llbracket \mu \rrbracket_\alpha} \le \frac{K_5^+(d,\alpha)}{K_5^-(d,0)},
\end{equation}
so $z_\alpha^{(d)}$ decays at the same rate as $\llbracket \mu \rrbracket_\alpha=\prod_j \max\{1,|\mu_j\}^{-\alpha_j}.$
We will abbreviate this relation by $z^{(d)}_\alpha \asymp \llbracket \mu \rrbracket_\alpha$ from now on.

We can also provide a more accurate description of the asymptotic leading term by dividing $\D^\alpha Z^{(d)}$ with $Z^{(d)}$.

\begin{proposition} \label{prop:za-asymp}
Similar to Corollary \ref{cor:DMdk-asymp-mult}, we assume that $0=\mu_1\ge\cdots\ge\mu_d$, and denote by
\[\mu=\mqty[\mu'\\\nu],\ \alpha=(\alpha',\varpi)\]
the partition of variables into dimensions $d-s$ and $s$. Then, the moment $z_\alpha^{(d)}$ satisfies that
\begin{equation} \label{za-asymp}
    z_\alpha^{(d)} = \frac{(2\varpi-1)!!}{2^{|\varpi|}}\llbracket \nu \rrbracket_\varpi z_{\alpha'}^{(d-s)}(\mu') + \ve^{(d)}_\alpha(\mu)
\end{equation}
with a uniform error estimate
\begin{equation} \label{za-asymp-err}
    \left|\ve^{(d)}_\alpha\right| \le K_6(d,s,\alpha) \llbracket \mu \rrbracket_\alpha \sum_{j=1}^s \frac{1}{|\nu_j|}
\end{equation}
for $\nu_1\le -1$.
\end{proposition}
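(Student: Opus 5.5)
The plan is to write $z^{(d)}_\alpha = \D^\alpha Z^{(d)}/Z^{(d)}$ and apply Corollary \ref{cor:DMdk-asymp-mult} at order $n=0$ to both the numerator and the denominator, using $Z^{(d)}=\Gamma(\frac d2)M^{(d)}_0$ from \eqref{Zd-is-Md0}. With $k=0$ and $n=0$, the leading term of \eqref{DMdk-asymp-mult} reads
\[
\D^\alpha M^{(d)}_0(\mu)=\frac{(2\varpi-1)!!}{2^{|\varpi|}}\llbracket\nu\rrbracket_{\varpi+\frac12}\,\D^{\alpha'}M^{(d-s)}_0(\mu')+\D^\alpha R^{(d,s)}_{0,0}(\mu).
\]
The analogous expansion for $\alpha=0$ is
\[
M^{(d)}_0=\llbracket\nu\rrbracket_{\frac12}M^{(d-s)}_0(\mu')+R^{(d,s)}_{0,0}.
\]
Writing $A=\frac{(2\varpi-1)!!}{2^{|\varpi|}}\llbracket\nu\rrbracket_{\varpi+\frac12}\D^{\alpha'}M_0^{(d-s)}$ and $B=\llbracket\nu\rrbracket_{\frac12}M_0^{(d-s)}$, the factors $\Gamma(\frac d2)$ cancel in the ratio. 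We also have $A/B=\frac{(2\varpi-1)!!}{2^{|\varpi|}}\llbracket\nu\rrbracket_\varpi z^{(d-s)}_{\alpha'}(\mu')$, which is exactly the claimed leading term; the Gamma factors $\Gamma(\frac{d-s}{2})$ cancel here as well. The error is therefore
\[
\ve^{(d)}_\alpha=\frac{A+R_\alpha}{B+R_0}-\frac AB=\frac{R_\alpha B-AR_0}{B(B+R_0)},
\]
with $R_\alpha=\D^\alpha R^{(d,s)}_{0,0}$ and $R_0=R^{(d,s)}_{0,0}$.

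Next I bound each piece. Corollary \ref{cor:DMdk-asymp-mult} with $n=0$ gives
\[
|R_\alpha|\le K_3\llbracket\mu\rrbracket_{\alpha+\frac12}\sum_{j}|\nu_j|^{-1},\qquad |R_0|\le K_2\llbracket\mu\rrbracket_{\frac12}\sum_j|\nu_j|^{-1},
\]
uniformly for $\nu_1\le-1$. The quantities $A$ and $B$ are controlled by Lemma \ref{lem:DZ-order} in dimension $d-s$ applied to $\mu'$; this is legitimate since $\mu'$ satisfies $0=\mu_1\ge\cdots\ge\mu_{d-s}$. It gives $B\asymp\llbracket\nu\rrbracket_{\frac12}\llbracket\mu'\rrbracket_{\frac12}=\llbracket\mu\rrbracket_{\frac12}$ and $|A|\lesssim\llbracket\mu\rrbracket_{\alpha+\frac12}$, using the factorization \eqref{w-alpha-div}. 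The numerator is then $\lesssim\llbracket\mu\rrbracket_{\alpha+1}\sum_j|\nu_j|^{-1}$. For the denominator, note that $B+R_0=M^{(d)}_0$, so Lemma \ref{lem:DZ-order} in dimension $d$ gives $B+R_0\gtrsim\llbracket\mu\rrbracket_{\frac12}$ directly. Hence $B(B+R_0)\gtrsim\llbracket\mu\rrbracket_1$. Dividing yields $|\ve^{(d)}_\alpha|\le K_6\llbracket\mu\rrbracket_\alpha\sum_j|\nu_j|^{-1}$, uniformly in $\mu'$ and $\nu$ with $\nu_1\le-1$.

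The main point requiring care is the denominator. If one only knew $B$ from below, one would need $|R_0|\le B/2$, which fails when $\nu_1$ is only moderately negative. Using the full lower bound of Lemma \ref{lem:DZ-order} for $M^{(d)}_0$ itself avoids any smallness condition, so no case split is needed. I also need to check that the lower bound on $B$ from Lemma \ref{lem:DZ-order} is uniform even when components of $\mu'$ diverge; this holds because the lemma's constants depend only on the dimension and $\alpha$. The exponent bookkeeping is straightforward: $\llbracket\mu\rrbracket_{\alpha+\frac12}\llbracket\mu\rrbracket_{\frac12}/\llbracket\mu\rrbracket_{1}=\llbracket\mu\rrbracket_\alpha$.
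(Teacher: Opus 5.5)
Your proposal is correct and follows the paper's proof essentially step for step. Both use the order-$0$ expansions of $\D^\alpha Z^{(d)}$ and $Z^{(d)}$ from Corollary~\ref{cor:DMdk-asymp-mult} together with the remainder bounds, and both apply Lemma~\ref{lem:DZ-order} in dimension $d-s$ and in dimension $d$, the latter giving the lower bound on the full denominator $Z^{(d)}$. The only cosmetic difference is that the paper cancels one factor of $B$ and writes $\ve^{(d)}_\alpha=\Gamma(\frac d2)\bigl(R_\alpha-(A/B)R_0\bigr)/Z^{(d)}$, bounding $A/B$ via \eqref{za-order}, whereas you keep $B$ in both numerator and denominator and use its two-sided bound.
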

\begin{proof}
We use the multivariate expansion \eqref{DMdk-asymp-mult} on $Z^{(d)}$ and $\D^\alpha Z^{(d)}$ at order $0$ (using \eqref{Zd-is-Md0} to convert $M^{(d)}_0$ and $M^{(d-s)}_0$):
\begin{align*}
    Z^{(d)} &= \frac{\Gamma(\tfrac{d}{2})}{\Gamma(\tfrac{d-s}{2})}\llbracket \nu \rrbracket_{\frac12} Z^{(d-s)}(\mu') + \Gamma(\tfrac{d}{2}) R^{(d,s)}_{0,0}(\mu), \\
    \D^\alpha Z^{(d)} &= \frac{\Gamma(\tfrac{d}{2})}{\Gamma(\tfrac{d-s}{2})} \frac{(2\varpi-1)!!}{2^{|\varpi|}}\llbracket \nu \rrbracket_{\varpi+\frac12} \D^{\alpha'} Z^{(d-s)}(\mu') +  \Gamma(\tfrac{d}{2}) \D^\alpha R^{(d,s)}_{0,0}(\mu).
\end{align*}
Dividing them and extracting the leading term, we get 
\begin{align*}
    z^{(d)}_\alpha &= \frac{\D^\alpha Z^{(d)}}{Z^{(d)}} \\
    &=\frac{(2\varpi-1)!!}{2^{|\varpi|}}\llbracket \nu \rrbracket_{\varpi} z^{(d-s)}_{\alpha'}(\mu') \\
    &\quad +\underbrace{\Gamma(\tfrac{d}{2}) \frac{\D^\alpha R^{(d,s)}_{0,0}(\mu) - \frac{(2\varpi-1)!!}{2^{|\varpi|}}\llbracket\nu\rrbracket_\varpi z^{(d-s)}_{\alpha'}(\mu') R^{(d,s)}_{0,0}(\mu)}{Z^{(d)}(\mu)}}_{\ve_\alpha^{(d)}(\mu)}.
\end{align*}
The error $\ve_\alpha^{(d)}(\mu)$ has been identified with the fraction.

The following bounds come from our previous estimates:
\begin{align*}
    \left| \D^\alpha R^{(d,s)}_{0,0}(\mu) \right| &\le K_3(0,0,d,s,\alpha)\llbracket\mu\rrbracket_{\alpha+\frac12} \sum_{j=1}^s \frac{1}{|\nu_j|}, \\
    \left| R^{(d,s)}_{0,0}(\mu)\right| &\le K_3(0,0,d,s,0)\llbracket\mu\rrbracket_{\frac12} \sum_{j=1}^s \frac{1}{|\nu_j|}, \text{ by \eqref{DMdk-asymp-mult-rem};}\\
    z^{(d-s)}_{\alpha'}(\mu') &\le \frac{K_5^+(d-s,\alpha')}{K_5^-(d-s,0)} \llbracket\mu'\rrbracket_{\alpha'}, \text{ by \eqref{za-order};}\\
    Z^{(d)} &\ge K_5^-(d,0) \llbracket\mu\rrbracket_{\frac12}, \text{ by \eqref{DZ-order}.}
\end{align*}
Note that a lower bound is given for the denominator, while upper bounds are given for the numerator.
Combining these bounds and choosing the largest among the constants, we get
\begin{align*}
    |\ve_\alpha^{(d)}(\mu)| \le K_6(d,s,\alpha) \frac{\llbracket\mu\rrbracket_{\alpha+\frac12}}{\llbracket\mu\rrbracket_{\frac12}} \sum_{j=1}^s  \frac{1}{|\nu_j|} = K_6(d,s,\alpha) \llbracket\mu\rrbracket_{\alpha}  \sum_{j=1}^s \frac{1}{|\nu_j|},
\end{align*}
matching \eqref{za-asymp-err} as desired.
\end{proof}

\begin{example}
Under the notation of Proposition \ref{prop:za-asymp}, we apply the asymptotic estimate \eqref{za-asymp} to the lowest-order moments $z_j$ by setting $\alpha=e_j$ the unit vector. As $\nu_1\to-\infty$, if $j\le d-s$ is associated with $\mu'$, then $\varpi=0$ and $\alpha'$ is a unit vector in $d-s$ dimensions, so we have that
\begin{subequations} \label{zj-asymp}
\begin{equation}
    z_j^{(d)}(\mu) = z_j^{(d-s)}(\mu') + O\qty(\llbracket \mu_j\rrbracket_1 \sum_{k=1}^s \frac{1}{|\nu_k|}).
\end{equation}
That is, the second-order moments corresponding to the bounded $\mu_j$ converge to their $(d-s)$-dimensional counterparts.
Otherwise, if $j>d-s$, then $\alpha'=0$ and $\varpi$ is a unit vector with $|\varpi|=1, (2\varpi-1)!!=1$, and
\begin{equation}
    z_j^{(d)}(\mu) = \frac{1}{2|\mu_j|} + O\qty(\llbracket \mu_j\rrbracket_1 \sum_{k=1}^s \frac{1}{|\nu_k|}).
\end{equation}
\end{subequations}
That is, moments corresponding to unbounded $\mu_j$ converge to 0 at the precise order of $O(|\mu_j|^{-1})$, which is exactly the asymptotic profile derived by Kent \cite{kent_asymptotic_1987}. 
These asymptotic relations are very useful as initial guesses in Bingham closure solvers.
\end{example}



\subsection{Bingham closure and entropy function}

We remove the superscript $(d)$ again in this subsection, and now proceed to the main topic of this section: the Bingham closure.

It is conventional to normalize the Bingham parameter $B=\diag(\mu)$ by the traceless gauge condition $\tr B=0$. In terms of eigenvalues, $\mu$ is restricted to the set
\begin{equation}
    S_0^d \triangleq \{x\in\BbbR^d: x_1+\cdots+x_d=0\}.
\end{equation}
Evidently, $S_0^d$ is the orthogonal complement of $\vec 1=[1,1,\ldots,1]^T$ in $\BbbR^d$, and serves as a canonical representative space for the quotient space over the equivalence classes $\{\mu+t\}$ of Bingham distribution parameters. We denote by
\begin{equation} \label{proj-grad}
    \nabla_0 F(\mu) = \qty(I - \frac{1}{d}\vec 1 \vec 1^T)\nabla F(\mu)
\end{equation}
the orthogonal projection of the regular gradient in $\BbbR^d$ onto $S_0^d$, which can also be viewed as the gradient operator on $S_0^d$.

As has been shown in \eqref{bingham-mmnt}, the second-order moments $z_j$ arise as the gradient over $\mu$. Since $\mu\in S_0^d$, the gradient should also be projected to $S_0^d$. By the constraint \eqref{sum-zj-is-1}, the moment vector $z$ stays in a hyperplane perpendicular to $\vec 1$ and parallel to $S_0^d$, so it should be shifted by a fixed length along the direction $\vec 1$:
\begin{equation} \label{qi-def}
    q(\mu)=z(\mu)-\frac1d \in S_0^d.
\end{equation}
(The vector-scalar-addition notation in \eqref{multi-index} is used.) When the dimension $d=3$, this is the well-known Q-tensor in diagonal form \cite{de_gennes_short_1971,mottram_introduction_2014}.
Since $0<z_j<1$ by definition, the vector $q$ is contained in the following set:
\begin{equation} \label{eigq-phy}
    \Delta_0^d=\left\{x\in S_0^d: x_j\in\left(-\frac1d,\frac{d-1}d\right)\right\}.
\end{equation}

Formally, the Bingham closure is defined as the process of determining a unique $\mu\in S_0^d$ for all $q\in \Delta_0^d$, such that $q=q(\mu)$ satisfies \eqref{qi-def}. Sometimes, the term ``Bingham closure'' is also used to refer to the resulting $\mu$ or the distribution $\mathrm{Bing}(\mu)$ as a function of $q$. 
The main result of Bingham closure is existence and uniqueness, i.e.~the mapping $q(\mu)$ is both injective and surjective from $S_0^d$ onto $\Delta_0^d$. 
First, we prove the injectivity of $q=q(\mu)$ with a strict convexity argument, which generalizes \cite{ball_nematic_2010} into arbitrary dimension.
\begin{proposition} \label{prop:Zd-conv}
The function $\ln Z(\mu)$ satisfies that:
\begin{enumerate}[\rm(i)]
\item $\ln Z(\mu)$ is smooth and strictly convex on $S_0^d$.
\item The gradient of $\ln Z$ in $S_0^d$ is $\nabla_0(\ln Z)=q$.
\item The Hessian of $\ln Z(\mu)$ on $\BbbR^d$ is the covariance matrix of $(m_1^2,\ldots,m_d^2)$ under $\mathrm{Bing}(\mu)$:
\begin{equation} \label{hess-lnZ}
    H(\mu) = ((z_{jk}-z_jz_k))_{d\times d}
\end{equation}
and it is strictly positive definite when restricted to  $S_0^d$.
\end{enumerate}
As a result, the gradient mapping $\nabla_0(\ln Z)$ is a smooth diffeomorphism from $S_0^d$ onto its image.
\end{proposition}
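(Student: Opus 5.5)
The approach is to treat $\ln Z$ as the log-partition function of an exponential family on $\BbbS^{d-1}$ with sufficient statistic $T(m)=(m_1^2,\ldots,m_d^2)$. The proof then reduces to differentiating under the integral sign and checking a non-degeneracy condition. Smoothness is immediate: by \eqref{Z-mu}, $Z(\mu)$ is the integral of $\exp(\sum_j\mu_jm_j^2)$ over a compact manifold. The integrand is smooth in $\mu$ with all derivatives bounded locally uniformly, so $Z$ is $C^\infty$ (in fact real-analytic) and positive, and hence $\ln Z$ is smooth on $\BbbR^d$ and on $S_0^d$.

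For (ii), I would use \eqref{DZd-mu} with $\alpha=e_j$, which gives $\pt_j\ln Z=\D^{e_j}Z/Z=z_j$, so $\nabla\ln Z=z$. Applying the projection \eqref{proj-grad} and using $\sum_j z_j=1$ from \eqref{sum-zj-is-1}, we get $\nabla_0\ln Z=z-\frac1d(\vec1^Tz)\vec1=z-\frac1d=q$, as defined in \eqref{qi-def}.

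For (iii), I would differentiate once more. Since $\pt_k z_j=\D^{e_j+e_k}Z/Z-(\D^{e_j}Z)(\D^{e_k}Z)/Z^2=z_{jk}-z_jz_k$, we obtain \eqref{hess-lnZ}, and this is exactly $\mathrm{Cov}(m_j^2,m_k^2)$ under $\mathrm{Bing}(\mu)$. Hence for any $v\in\BbbR^d$,
\[ v^TH(\mu)v=\mathrm{Var}\Big(\sum_j v_jm_j^2\Big)\ge0. \]
The main step is strictness on $S_0^d$. If the variance vanishes for some $v\in S_0^d\setminus\{0\}$, then, because the Bingham density is strictly positive and continuous on the whole sphere, $g(m)=\sum_jv_jm_j^2$ must be constant on $\BbbS^{d-1}$. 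Evaluating $g$ at the coordinate vectors $e_j$ gives $v_j=c$ for all $j$, so $v=c\vec1$. Then $v\in S_0^d$ forces $dc=0$, i.e.\ $v=0$, a contradiction. So $H$ is positive definite on $S_0^d$. The same computation explains why $S_0^d$ is the right space: $H\vec1=0$, which reflects the gauge invariance \eqref{Z-mupt-eq}.

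Item (i) then follows, since a smooth function whose Hessian restricted to $S_0^d$ is positive definite everywhere is strictly convex there. For the final claim, strict convexity gives injectivity of $\nabla_0\ln Z$ by the standard monotonicity argument:
\[ \langle\nabla_0\ln Z(\mu)-\nabla_0\ln Z(\mu'),\mu-\mu'\rangle=\int_0^1(\mu-\mu')^TH\big(\mu'+t(\mu-\mu')\big)(\mu-\mu')\d t>0 \quad\text{for }\mu\ne\mu'. \]
The Jacobian of $\nabla_0\ln Z$ as a map $S_0^d\to S_0^d$ is the restricted Hessian, which is invertible. By the inverse function theorem the map is a local diffeomorphism, and being injective it is a smooth diffeomorphism onto its image, which is open. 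The only genuinely non-routine point is the positivity-of-variance argument, and I expect no real obstacle there.
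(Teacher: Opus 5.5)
Your proof is correct. For (ii) and (iii) it coincides with the paper's argument: the same derivative computation, and the same variance argument that ends by evaluating $\sum_j v_j m_j^2$ at the coordinate vectors.

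The genuine difference is in (i). The paper proves strict convexity directly at the level of function values. It applies H\"older's inequality to $\int_{\BbbS^{d-1}}\e^{\sum_j(t\mu_j+(1-t)\mu'_j)m_j^2}\d m$. The equality case forces $\e^{\sum_j\mu_jm_j^2}\propto\e^{\sum_j\mu'_jm_j^2}$, hence $\mu-\mu'$ is constant, hence $\mu=\mu'$ on $S_0^d$. You instead deduce (i) from (iii), using positive definiteness of the Hessian along segments in $S_0^d$. There is no circularity here, since your proof of (iii) does not use (i).

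What each route buys:
\begin{itemize}
\item The H\"older argument is a self-contained zeroth-order proof that needs no differentiation.
\item However, the paper needs the positive-definite restricted Hessian anyway for the inverse function theorem. So it ends up using the same non-degeneracy fact twice: a form $\sum_j v_jm_j^2$ that is constant on the sphere forces $v\propto\vec 1$.
\item Your ordering extracts both strict convexity and invertibility of the Jacobian from that single fact. Because a positive-definite Hessian is the stronger property, nothing is lost.
\item Your monotonicity identity makes the injectivity step explicit, where the paper simply attributes it to strict convexity. It also gives openness of the image.
\end{itemize}
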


\begin{proof}
(i) By H\"older's inequality, for any $t\in(0,1)$ and $\mu,\mu'\in S_0^d$,
\[\int_{\BbbS^{d-1}} \e^{\sum_j (t\mu_j +(1-t)\mu'_j) m_j^2}\d m 
\le \qty(\int_{\BbbS^{d-1}} \e^{\sum_j \mu_j m_j^2}\d m)^t \qty(\int_{\BbbS^{d-1}} \e^{\sum_j \mu'_j m_j^2}\d m)^{1-t}.\]
Taking the logarithm of both sides and adding a few constants gives us
\[\ln Z(t \mu +(1-t)\mu' ) \le t \ln Z(\mu)+(1-t)\ln Z(\mu').\]
The equality holds if and only if $\e^{\sum_j \mu_j m^2}\propto \e^{\sum_j \mu_j' m_j^2}.$ Evaluating these two functions at the unit vectors $e_k$, $k=1,\ldots,d$ gives us
\[\mu_j-\mu'_j=\text{const},\]
which indicates that $\mu=\mu'$ as $\mu,\mu'\in S_0^d$.

(ii) Direct computation gives us
\[\pt_j(\ln Z) = \frac{\pt_j Z}{Z} = z_j,\]
so $\nabla(\ln Z)=z$. Then, by \eqref{proj-grad}, we have
\[\nabla_0(\ln Z) = z-\frac{\vec 1}{d}\sum_j z_j = z-\frac1d = q\]
as desired, where the constraint \eqref{sum-zj-is-1} is used.

(iii) We first derive the expression \eqref{hess-lnZ}, which is obtained by differentiating $z_j=\frac{\pt_j Z}{Z}$:
\[H_{jk}=\pt_{jk}(\ln Z)=\pt_k\qty(\frac{\pt_j Z}{Z}) = \frac{Z \pt_{jk}Z - \pt_k Z \pt_j Z}{Z^2}=z_{jk}-z_jz_k.\]
Then, for any nonzero $x\in S_0^d$, we have that
\begin{align*}
    Hx\cdot x &=
    \sum_{j,k=1}^d z_{jk} x_jx_k - \qty(\sum_{j=1}^d z_jx_j)^2 \\
    &=\BbbE^\mu\qty(\sum_{j=1}^d x_j m_j^2)^2 - \qty(\BbbE^\mu \sum_{j=1}^d x_j m_j^2)^2,
\end{align*}
which is the variance of $\sum_j x_jm_j^2$ under $\mathrm{Bing}(\mu)$.
If $Hx\cdot x=0$, then $\sum_j x_j m_j^2$ must stay constant on $\BbbS^{d-1}$ since the probability density $f(m)$ is nonzero everywhere. This is impossible for a nonzero $x\in S_0^d$. Therefore, $H$ is positive definite restricted to $S_0^d$.

Finally, since $\ln Z$ is strictly convex, its gradient mapping $q(\mu)=\nabla_0(\ln Z)$ must be injective on $S_0^d$; since its Hessian is positive definite everywhere, for every $q$ in the range there exists a local Bingham closure $\mu=\mu(q)$ by the inverse function theorem, and the mapping between $q$ and $\mu$ is a diffeomorphism.
\end{proof}

Then, we prove surjectivity. It suffices to show that the range of $q(\mu)$ actually covers the entire $\Delta_0^d$.
\begin{proposition} \label{prop:qj-surj}
The mapping $q=q(\mu)$ \eqref{qi-def} is surjective from $S_0^d$ onto $\Delta_0^d$.
\end{proposition}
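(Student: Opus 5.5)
The plan is to show that the image $q(S_0^d)$ is both open and closed in the connected set $\Delta_0^d$. The set $\Delta_0^d$ is convex, hence connected; it is the open simplex $\{z_j>0,\ \sum_j z_j=1\}$ shifted by $-\frac1d$. Openness of the image follows from Proposition \ref{prop:Zd-conv}: the Hessian restricted to $S_0^d$ is positive definite, so the inverse function theorem makes $q$ a local diffeomorphism, and in particular an open map. The real content is therefore relative closedness, which we reduce to properness. Concretely, we will show that if $\mu^{(n)}\in S_0^d$ with $|\mu^{(n)}|\to\infty$, then $q(\mu^{(n)})$ leaves every compact subset of $\Delta_0^d$. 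Equivalently, $\min_j z_j(\mu^{(n)})\to 0$ along a subsequence.

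Given properness, the closedness argument runs as follows. Take $q^{(n)}=q(\mu^{(n)})\to q^*\in\Delta_0^d$. Properness forces $\mu^{(n)}$ to be bounded. Extract a convergent subsequence $\mu^{(n)}\to\mu^*$; by continuity $q(\mu^*)=q^*$. So the image is open, closed and nonempty in $\Delta_0^d$, hence equals it.

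For properness we use the moment asymptotics. By permutation symmetry of $Z$ and the gauge invariance \eqref{Z-mupt-eq}, we may shift each $\mu^{(n)}$ into the ordered gauge $0=\mu_1\ge\cdots\ge\mu_d$ after relabeling coordinates. This shift does not change $z$. A sequence that is unbounded in $S_0^d$ has unbounded spread $\max_j\mu_j-\min_j\mu_j$, so in this gauge $\mu_d^{(n)}\to-\infty$. Lemma \ref{lem:DZ-order} with $\alpha=e_d$, in the form \eqref{za-order}, gives
\[
z_d\le \frac{K_5^+(d,e_d)}{K_5^-(d,0)}\max\{1,|\mu_d|\}^{-1}\to 0 .
\]
Alternatively, the example following Proposition \ref{prop:za-asymp} with $s=1$ gives $z_d\approx (2|\mu_d|)^{-1}$. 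Either way, $z_d\to 0$, so $q_d\to-\frac1d$ and $q$ approaches $\partial\Delta_0^d$.

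The main obstacle is making sure this bound is uniform, so that it really yields properness without any assumption on how the other eigenvalues behave. The bound \eqref{za-order} has constants independent of $\mu$, so it is uniform; this is exactly why Lemma \ref{lem:DZ-order} was proved with constants independent of the number of diverging eigenvalues. The remaining care is bookkeeping. The relabeling permutation may vary with $n$, but there are finitely many permutations, so we can pass to a subsequence on which it is fixed. The conclusion is then that $\min_j z_j\to0$, which is precisely exit from every compact subset of the open simplex.
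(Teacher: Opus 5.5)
Your proof is correct, but it takes a different route from the paper's. The paper argues by convex geometry. Lemma \ref{lem:conv-dom-conj}, via coercivity of $F(x)-g\cdot x$, shows that the range of the gradient of a smooth strictly convex function is convex. The paper then applies \eqref{za-order} only along the rays $\mu(t)=[(d-1)t,-t,\ldots,-t]^T$ and their permutations, obtaining image points that approach each vertex $e_j-\frac1d$. A barycentric-coordinate limit finally places every $P\in\Delta_0^d$ in the convex hull of such image points. You instead run an open--closed argument on the connected set $\Delta_0^d$. Openness comes from the nondegenerate restricted Hessian alone. Relative closedness comes from properness, which you correctly derive from the uniform bound \eqref{za-order}: $|\mu|\to\infty$ in $S_0^d$ forces the spread $\max_j\mu_j-\min_j\mu_j\to\infty$, so in the ordered gauge $\mu_d\to-\infty$ and $z_d\lesssim|\mu_d|^{-1}$. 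The trade-off is as follows. The paper needs asymptotic information only along $d$ special rays (where $d-1$ eigenvalues coincide), at the price of the convex-range lemma and the limiting barycentric argument. Your route needs no convexity of the range and never uses global injectivity, but it relies on the full uniformity of Lemma \ref{lem:DZ-order} over arbitrary configurations of diverging eigenvalues. As a side benefit, your argument shows directly from the asymptotics that $q(\mu)$ approaches $\partial\Delta_0^d$ whenever $|\mu|\to\infty$, i.e.\ that the Bingham closure parameter blows up exactly at the boundary of the moment simplex. In the paper this follows only a posteriori, from the fact that $q$ is a homeomorphism onto $\Delta_0^d$.
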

\begin{proof}
First, we show that the range of $q=\nabla_0(\ln Z)$, which we denote by $q(S_0^d)\subset \Delta_0^d$, is a convex set. Then, we use point sequences in $q(S_0^d)$ to approximate vertices of the simplex $\Delta_0^d$, so that any given point $P\in\Delta_0^d$ is contained in their convex hull in the limiting sense, and therefore belongs to $q(S_0^d)$. 

Convexity of the gradient range is guaranteed by the following elementary lemma (a special case of \cite[Thm.~14.17]{bauschke_convex_2017} for smooth functions). The conclusion is stated in $\BbbR^n$, but by identifying $S_0^d$ with $\BbbR^{d-1}$ through an orthonormal basis, one can directly apply it to the function $\ln Z$.
Hence, $q(S_0^d)$ is convex in $S_0^d$.
\begin{lemma}\label{lem:conv-dom-conj}
Suppose that $F$ is a smooth and strictly convex function on $\BbbR^n$. Then, the range of $\nabla F$ forms a convex set in $\BbbR^n$.
\end{lemma}
\begin{proof}[Proof of Lemma \ref{lem:conv-dom-conj}]
Denote by $D=\{g: g=\nabla F(x), x\in\BbbR^n\}.$ 
We show that $g\in D$ if and only if $F(x)-g\cdot x$ is coercive, i.e.
\begin{equation} \label{F-gx-coer}
    \lim_{|x|\to\infty} [F(x)-g\cdot x]=\infty
\end{equation}
uniformly in all directions. Then, convexity of $D$ follows naturally since the points where \eqref{F-gx-coer} holds obviously form a convex set.

If \eqref{F-gx-coer} holds, then $F(x)-g\cdot x$ must attain its minimum at a finite point $x^*$. Taking the gradient gives us $\nabla F(x^*) = g$, so $g\in D.$
Conversely, suppose that $g=\nabla F(x^*)\in D$, and we aim to prove the coercivity of $F(x)-g\cdot x$. By substituting $\tilde F(y) = F(x^*+y)-F(x^*)-g\cdot y,$
which is also strictly convex, we can further assume w.l.o.g.~that $g=0$ and $\nabla F(0)=0$.
By strict convexity, $F(x)$ attains its strict minimum along any line through $x=0$, so for any unit vector $v\in \BbbS^{n-1}$ we have that $F(v)>0$. By the compactness of the unit sphere, we have that
\[\inf_{v\in \BbbS^{n-1}} F(v)= \delta>0.\]
For any $|x|>1$, denote by $\hat x = x/|x|$. We apply the convexity of $F$ to the collinear points $x,\hat x$ and 0, and get that
\begin{align*}
    F(\hat x) &< \qty(1-\frac{1}{|x|})F(0) + \frac{1}{|x|} F(x) \\
    \Rightarrow F(x) &> |x| F(\hat x) \ge \delta |x|\to \infty,
\end{align*}
so coercivity \eqref{F-gx-coer} holds.
\end{proof}

We then construct sequences that approximate the vertices $e_1-\frac1d,\cdots,e_d-\frac1d$ of $\Delta_0^d$. By the symmetry of $\ln Z$, the components of $z(\mu)=\nabla (\ln Z(\mu))$ (and hence of $q(\mu)=z(\mu)-\frac1d$) are permuted accordingly as those of $\mu$ are permuted, so it suffices to construct a sequence of $q$'s that approach $e_1-\frac1d$.
Letting $\mu(t)=[(d-1)t, -t,\ldots, -t]^T$ and $t\to\infty$, we have by the order estimate $z_j=O(\llbracket \mu_j\rrbracket_1)$ that
\[z_j(\mu) = z_j(0,-dt,\ldots,-dt) \le  \frac{C}{dt}\to 0,\ j=2,3,\ldots,d,\]
where we used the equivalence relation \eqref{Z-mupt-eq} to translate $\mu\in S_0$ to $[0,-dt,\ldots,-dt]^T$, and $C=\frac{K_5^+(d,e_j)}{K_5^-(d,0)}$ is the constant from  \eqref{za-order} independent of $t$.
By the constraint \eqref{sum-zj-is-1}, this implies that $z_1(\mu(t))\to 1$. 
Rewriting by $q=z-\frac1d$, we have that $q_1\to 1-\frac1d$ and $q_j\to -\frac1d$ for $j=2,3,\ldots,d$, so the sequence $q(\mu(t))$ approximates $e_1-\frac1d$ as desired.

Now that we have obtained sequences $\{Q^j_n\}\subset q(S_0^d)$ such that $\lim_{n\to\infty} Q^j_n =e_j-\frac1d$, for $n$ sufficiently large and any $P\in \Delta_0^d$, we can write out its barycentric coordinates with respect to $Q^1_n,\cdots, Q^d_n$ as
\begin{equation}
    P = \lambda_1 Q^1_n + \cdots + \lambda_d Q^d_n,
\end{equation}
since the points $e_j-\frac1d$ form a non-degenerate simplex in the space $S_0^d$, and $Q^1_n,\cdots, Q^d_n$ approximate them.
Denoting by $P=[p_1,\ldots,p_d]^T$, we have by basic linear algebra that
\begin{equation}
    P = \qty(I-\frac1d \vec 1 \vec 1^T)\qty(P+\frac1d \vec 1)=\qty(p_1+\frac1d)\qty(e_1-\frac1d) + \cdots + \qty(p_d+\frac1d)\qty(e_d-\frac1d).
\end{equation}
Since barycentric coordinates are continuous, their limit equals
\[\lim_{n\to\infty} \lambda_j = p_j+\frac1d>0\]
by the definition of $\Delta_0^d$ \eqref{eigq-phy}. Hence, for sufficiently large $n$, it must hold that $\lambda_j>0$ for all $j=1,\ldots, d$, and that $P$ is contained in the convex hull of $Q^1_n,\ldots, Q^d_n$. Therefore, $P \in q(S_0^d)$ as desired.
\end{proof}

Combining Propositions \ref{prop:Zd-conv} and \ref{prop:qj-surj} immediately yields the following result on Bingham closure.
\begin{theorem} \label{thm:bingcl}
    The Bingham closure $\mu=\mu(q)$ exists uniquely for any $q\in \Delta_0^d$.
\end{theorem}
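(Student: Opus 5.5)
The theorem follows by combining the two preceding propositions; the plan is simply to assemble them.

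\emph{Uniqueness.} Fix $q\in\Delta_0^d$ and suppose $\mu,\mu'\in S_0^d$ both satisfy $q(\mu)=q(\mu')=q$. By Proposition \ref{prop:Zd-conv}(ii), $q(\mu)=\nabla_0(\ln Z)(\mu)$, and by part (i), $\ln Z$ is strictly convex on $S_0^d$. Apply strict monotonicity of the gradient of a strictly convex function:
\[
\bigl(\nabla_0\ln Z(\mu)-\nabla_0\ln Z(\mu')\bigr)\cdot(\mu-\mu')>0 \quad\text{whenever } \mu\neq\mu'.
\]
This follows by restricting $\ln Z$ to the segment joining $\mu$ and $\mu'$ and integrating the Hessian $H$. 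By Proposition \ref{prop:Zd-conv}(iii), $H$ is positive definite on $S_0^d$, and $\mu-\mu'\in S_0^d$. Since the left-hand side vanishes here, $\mu=\mu'$.

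\emph{Existence.} Proposition \ref{prop:qj-surj} shows that $q(S_0^d)=\Delta_0^d$. Hence for every $q\in\Delta_0^d$ there is some $\mu\in S_0^d$ with $q(\mu)=q$.

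Consequently $q:S_0^d\to\Delta_0^d$ is a bijection, and its inverse $\mu(q)$ is the Bingham closure. The final statement of Proposition \ref{prop:Zd-conv}, which gives the inverse function theorem with a nondegenerate Hessian, also shows that this inverse is a smooth diffeomorphism; this is a bonus beyond what the theorem requires.

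The only point needing care is that the gradient is taken intrinsically on $S_0^d$ rather than on $\BbbR^d$. This is handled by identifying $S_0^d$ with $\BbbR^{d-1}$ through an orthonormal basis, exactly as in the proof of Proposition \ref{prop:qj-surj}. There is no real obstacle, since all the analytic work was done in Propositions \ref{prop:Zd-conv} and \ref{prop:qj-surj}.
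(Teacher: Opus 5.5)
Your proposal is correct and follows the paper: the paper proves the theorem in one line by combining Proposition \ref{prop:Zd-conv} (injectivity from strict convexity of $\ln Z$ on $S_0^d$) with Proposition \ref{prop:qj-surj} (surjectivity onto $\Delta_0^d$). Your Hessian-integration argument for strict monotonicity of $\nabla_0\ln Z$ simply writes out the injectivity step that the paper states without detail.
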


\subsection{Entropy decomposition and Lipschitz regularity of residual}

With the Bingham closure established, one can now model the Bingham distribution through its second-order moments $q$ (or $z$), which are more directly obtainable in experiments compared to the parameters $\mu$. 
In particular, the entropy \eqref{bingham-ent} of the Bingham distribution can now be regarded as a function of $q$. If $\mu=\mu(q)$ is the Bingham closure of $q\in \Delta_0^d$, we compute directly that
\begin{align*}
    S &= \int_{\BbbS^{d-1}} f \ln(\frac{\exp(\sum_j \mu_j m_j^2)}{Z}) \d m =\BbbE^\mu\qty(\sum_j \mu_j m_j^2 - \ln Z) \\
    &=\sum_j \mu_j z_j - \ln Z.
\end{align*}
Since $\mu$ is traceless by the gauge condition $\mu\in S_0^d$, the expression above is equivalent to
\begin{equation}\label{S-q}
    S(q)=\mu\cdot q-\ln Z.
\end{equation}
Recall that $q$ is the gradient of $\ln Z$ over $\mu\in S_0^d$, so \eqref{S-q} is the Legendre transform of $\ln Z$ \cite{boyd_convex_2004}, and the Bingham entropy $S(q)$ is the convex dual of $\ln Z$. The function $S(q)$ is known to blow up logarithmically as $q$ approaches the boundaries of $\Delta_0^d$ when $d=3$, posing challenges in its computation.

To tackle these challenges, the authors have recently proposed a decomposition of $S(q)$ into an explicit but singular leading part and an implicit but sufficiently smooth residual part \cite{shi_molecular_2026}. In arbitrary dimensions $d$, our result is stated as follows.
\begin{theorem} \label{thm:Sq-asymp}
The entropy $S(q)$ \eqref{S-q} satisfies that
\begin{equation} \label{Sq-asymp}
    S(q) = -\frac12\ln(z_1\cdots z_d) + \Delta S(q),
\end{equation}
where $z=q+\frac1d$, and $\Delta S(q)$ is a uniformly Lipschitz function that can be extended continuously to the closure of $\Delta_0^d$.
\end{theorem}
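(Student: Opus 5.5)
We show that $\nabla_q \Delta S$ is uniformly bounded on $\Delta_0^d$. Since $\Delta_0^d$ is a convex (relatively open) set, a bounded gradient gives a uniform Lipschitz constant. A uniformly Lipschitz function is uniformly continuous, so it extends continuously to the closure.

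\textbf{Step 1: the gradient of $\Delta S$.} Since $S$ is the Legendre dual of $\ln Z$ on $S_0^d$ (Proposition~\ref{prop:Zd-conv}), we have $\nabla_q S=\mu(q)$, understood modulo the $\vec 1$ direction. Because $\mu\mapsto q$ is a diffeomorphism, the differential of $S(q)=\mu\cdot q-\ln Z$ is $\mu\cdot \d q$. The differential of $-\frac12\sum_j\ln z_j$ is $-\sum_j \frac{1}{2z_j}\d q_j$, since $\d z=\d q$. Therefore
\[
\d(\Delta S)=\sum_{j=1}^d\Big(\mu_j+\frac{1}{2z_j}\Big)\d q_j ,\qquad \sum_j \d q_j=0 .
\]
This form is unchanged if $\mu$ is shifted by a constant. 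We may therefore work in the ordered gauge $0=\mu_1\ge\cdots\ge\mu_d$ and write $\mu_j+\frac1{2z_j}=:g_j(\mu)$. It suffices to show that $\sup_{\mu}|g_j(\mu)|<\infty$ for every $j$, uniformly over the ordered cone.

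\textbf{Step 2: bounding $g_j$.} Two estimates handle the two possible sizes of $|\mu_j|$.

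If $|\mu_j|\le X$ for a fixed constant $X$, then \eqref{za-order} gives $z_j\asymp\llbracket\mu_j\rrbracket_1\asymp 1$. Hence $1/z_j$ is bounded, and so $|g_j|\le X+C$.

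If $|\mu_j|$ is large, we need the sharper estimate $z_j=\frac{1}{2|\mu_j|}\big(1+O(|\mu_j|^{-1})\big)$, which gives $\frac{1}{2z_j}=|\mu_j|+O(1)$ and therefore $g_j=O(1)$. This must hold uniformly over all the other eigenvalues. Proposition~\ref{prop:za-asymp} supplies it. We apply it with $\nu=(\mu_j,\ldots,\mu_d)$, the tail of the ordering starting at index $j$, so that $s=d-j+1$ and $\nu_1=\mu_j\le-1$. With $\alpha=e_j$ we have $\varpi=e_1$, and the proposition gives
\[
z_j=\frac{1}{2|\mu_j|}+\ve,\qquad |\ve|\le K_6\,\llbracket\mu_j\rrbracket_1\sum_{k\ge j}\frac{1}{|\mu_k|}\le \frac{K_6\, s}{|\mu_j|^2},
\]
where the last inequality uses $|\mu_k|\ge|\mu_j|$ for all $k\ge j$. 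Hence $2|\mu_j|z_j=1+O(|\mu_j|^{-1})$, and $\frac1{2z_j}=|\mu_j|+O(1)$ once $|\mu_j|\ge X$ with $X$ large enough that the relative error is at most $\frac12$. The index $j=1$ is trivial, because $\mu_1=0$ and $z_1\ge 1/d$. The constants depend only on $d$.

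\textbf{Step 3: conclusion.} Since every $g_j$ is bounded, the gradient of $\Delta S$ is bounded: its projection onto $S_0^d$ is $(I-\frac1d\vec1\vec1^T)g$, and $g$ is bounded. Permutation symmetry removes the ordering assumption. As a consistency check, the entropy is bounded near the center of the simplex, so $\Delta S$ is finite there. Lipschitz continuity on the convex domain then gives uniform continuity, and hence a continuous extension to $\overline{\Delta_0^d}$.

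The main obstacle is the uniformity in Step 2. The naive choice $s=1$, that is, expanding only in $\mu_d$, does not control intermediate indices $j$ when $\mu_j$ is large but $\mu_{j+1},\ldots,\mu_d$ diverge at unrelated rates. Choosing $s$ adapted to $j$, as the tail starting at $j$, together with the monotonicity $|\mu_k|\ge|\mu_j|$, is what makes the error relative to $|\mu_j|^{-1}$ uniform. That uniformity is exactly what the remainder bound of Proposition~\ref{prop:za-asymp} provides, with no relative scaling assumed.
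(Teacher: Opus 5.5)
Your proof is correct, but it reaches the key bound $\sup|\mu_j+\frac{1}{2z_j}|<\infty$ by a different route than the paper.

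\textbf{The paper's route.} After the same reduction to the ordered gauge, the paper argues by induction on $d$, mirroring Lemma~\ref{lem:DZ-order}. It splits only on whether $\mu_d$ is bounded, in which case all of $\mu$ lies in a compact set, or $\mu_d\le -X$. In the latter case it uses the expansion with $s=1$, in $\mu_d$ alone. The intermediate indices $j\le d-1$ are then handled by three ingredients:
\begin{enumerate}[(a)]
\item the $(d-1)$-dimensional induction hypothesis $|2\mu_j z_j^{(d-1)}+1|=O(z_j^{(d-1)})$;
\item the comparison $z_j^{(d-1)}\asymp z_j$;
\item the crude bound $O(|\mu_d|^{-1})/z_j=O(|\mu_j|/|\mu_d|)=O(1)$.
\end{enumerate}

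\textbf{Your route.} You split per index on the size of $|\mu_j|$. In the bounded case you correctly use the uniform two-sided bound \eqref{za-order} rather than compactness. Compactness would fail here, since $\mu_{j+1},\ldots,\mu_d$ may still diverge. In the unbounded case you apply Proposition~\ref{prop:za-asymp} once, with the tail $\nu=(\mu_j,\ldots,\mu_d)$. Then $\alpha'=0$, the leading term is exactly $\frac{1}{2|\mu_j|}$, and monotonicity turns the error into $O(|\mu_j|^{-2})$.

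\textbf{Comparison.} Your route removes the second induction; the only induction left is the one inside Lemma~\ref{lem:DZ-order}. It also uses the multi-eigenvalue uniformity of the expansion in an essential way. As a result it gives $\frac{1}{2z_j}=|\mu_j|+O(1)$ directly for each diverging $\mu_j$. The paper's route needs only the single-eigenvalue expansion and makes the dimensional hierarchy explicit.

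\textbf{Two minor remarks.} First, your claim that $s=1$ cannot control intermediate indices holds only without induction. The paper shows that $s=1$ combined with the induction hypothesis suffices. Second, your assertion $z_1\ge\frac1d$ requires the ordering $z_1\ge\cdots\ge z_d$. The paper derives that ordering from the identity $z_j-z_k=2(\mu_j-\mu_k)z_{jk}$. You do not actually need it, because your bounded case already covers $j=1$.
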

\begin{remark}
The leading term
\begin{equation}\label{S-hat}
    \hat S(q) = -\frac12\ln(z_1\cdots z_d)
\end{equation}
captures the singular behavior of $S(q)$ near the boundary very accurately. It is reminiscent of a ``quasi-entropy'' with the same logarithmic form \cite{xu_quasi-entropy_2022}, which was intended to be a surrogate function for the implicit Bingham entropy in three dimensions:
\[\Xi(q) = -\nu \sum_j \ln(z_j(1-z_j)).\]
We will refer to the function $\hat S$ by the term ``quasi-entropy'' as well, but we note here that $\hat S$ and $\Xi$ have essentially different singular behavior near boundaries. Our quasi-entropy $\hat S$ has a Lipschitz-continuous residual term while $\Xi$ does not.
\end{remark}

Using the asymptotics established in Proposition \ref{prop:za-asymp}, we can prove this theorem in a much easier way than our previous work. Here we introduce the $O(\cdot)$ notation, but we will explicitly declare the dependence relations of the bounding constant whenever it appears.
\begin{proof}
Since $S$ is the convex dual of $\ln Z$ and $q=\nabla_0(\ln Z(\mu))$, $\nabla_0 S(q) = \mu$; moreover, 
\[\nabla_0 \hat S = \left[-\frac{1}{2z_1},\ldots,-\frac{1}{2z_d} \right]^T-c\vec 1,\]
where $c \vec 1$ acts as the projection \eqref{proj-grad}. 
Thus, it suffices to bound the pairwise differences
\begin{equation}\label{grad0-dS-bnd}
    \sup_{q\in\Delta_0^d} \left| \qty(\mu_j + \frac{1}{2z_j}) - \qty(\mu_k + \frac{1}{2z_k}) \right| < \infty,\ j,k=1,\ldots,d.
\end{equation}
The reformulation \eqref{grad0-dS-bnd} cancels out the projection constant $c$, and stays invariant under common shifts of $\mu_j$. By the symmetry of $Z$ with respect to permutations, we can therefore assume that
\[0=\mu_1\ge\cdots\ge\mu_d.\]

Using the integral representation \eqref{Zd-hankel}, it is straightforward to derive the identity
\begin{equation} \label{zj-zk-zjk-id}
    z_j-z_k = 2(\mu_j-\mu_k) z_{jk},
\end{equation}
(see the appendix for a detailed proof). Due to the positivity of moments, $z_j\ge z_k$ if and only if $\mu_j\ge\mu_k$. By the ordering of $\mu_j$, it then holds that
\[z_1\ge z_2\ge\cdots\ge z_d.\]
As $z_1+\cdots+z_d=1$, we have that $z_1\ge\frac1d$ is bounded away from zero, so $\mu_1+\frac{1}{2z_1}$ is uniformly bounded. Hence, the verification of \eqref{grad0-dS-bnd} reduces to the following bound:
\begin{equation} \label{grad0-dS-bnd-1}
    \sup_{\mu_d\le\ldots\le\mu_2\le 0} \left| \mu_j + \frac{1}{2z_j} \right| <\infty,\ j=2,3,\ldots,d.
\end{equation}
Note that the variable has been changed from $q$ to $\mu$.
We can then prove \eqref{grad0-dS-bnd-1} by inducting on the dimension in the same way as Lemma \ref{lem:DZ-order}.

The conclusion is trivial in dimension $d=1$.
Suppose that the conclusion holds for dimension $(d-1)$, and split into cases where $\mu_d$ is bounded or unbounded.
If $|\mu_d|< X$ is bounded for some constant $X>1$, then by the continuity and strict positivity of the mapping $z=z(\mu)$, all components $z_j$ are bounded away from zero by some fixed amount $z_j>\delta>0$. Therefore,
\begin{equation} \label{mu-1-2z-bnd}
    \sup_{-X<\mu_d\le \cdots\le \mu_2<0} \left|\mu_j+\frac{1}{2z_j} \right| \le C(X).
\end{equation}
Otherwise, $\mu_d\le -X$, and we use the asymptotic estimate \eqref{zj-asymp}:
\begin{equation} \label{zj-asymp-1}
    z_j = \begin{cases}
        z^{(d-1)}_j(\mu') + O(\llbracket\mu_j\rrbracket_1 |\mu_d|^{-1}), \ j\le d-1, \\
        (2|\mu_d|)^{-1} + O(|\mu_d|^{-2}), \ j=d,
    \end{cases}
\end{equation}
where $\mu'=[\mu_1,\ldots,\mu_{d-1}]^T,$ the subscript $(d-1)$ indicates a lower-dimensional distribution, and the coefficient of the $O(\cdot)$ term depends on $d$ only. For $j\le d-1$, we have that
\[\left| \mu_j+\frac{1}{2z_j}\right|= \frac{|2\mu_jz_j+1|}{2 z_j} = \frac{|2\mu_j z^{(d-1)}_j + 1| + O(|\mu_d|^{-1})}{2z_j},\]
where in the numerator $\mu_j\cdot O(\llbracket \mu_j \rrbracket_1 |\mu_d|^{-1}) = O(|\mu_d|^{-1})$ because $|\mu_j\cdot \llbracket \mu_j \rrbracket_1| \le 1$ by the definition of the modulus \eqref{w-alpha}.
By \eqref{za-order}, $z_j \asymp \llbracket \mu_j \rrbracket_1$ with a coefficient only dependent on $d$. Since $|\mu_d|\ge |\mu_j|$ by assumption, we can control the error term by
\[\frac{O(|\mu_d|^{-1})}{z_j} = O\qty(\frac{|\mu_j|}{|\mu_d|}) = O(1).\]
Then, using the induction hypothesis in $(d-1)$ dimensions, we have that
\[|2\mu_j z_j^{(d-1)}+1|= O(z_j^{(d-1)}) =O(\llbracket \mu_j \rrbracket_1)\]
as the estimate \eqref{za-order} holds in all dimensions; hence, the first term is also bounded by a constant.
For $j=d$, we have that
\[ \frac{|2\mu_dz_d+1|}{2 z_d} = \frac{O(|\mu_d|^{-1})}{2z_d},\]
which is also uniformly bounded by the order condition $z_d\asymp |\mu_d|^{-1}$ \eqref{za-order}.
Summarizing the above, we get
\begin{equation} \label{mu-1-2z-ubnd}
    \sup_{\mu_d\le -X} \left| \mu_j + \frac{1}{2z_j} \right| < \infty
\end{equation}
as desired. Combining the results \eqref{mu-1-2z-bnd} and \eqref{mu-1-2z-ubnd} yields \eqref{grad0-dS-bnd-1}, which then proves Theorem \ref{thm:Sq-asymp}.
\end{proof}

Now that $\Delta S(q)$ is Lipschitz continuous and extends to the boundary of $\Delta_0^d$, a natural question is how this function behaves on the boundary. 
The domain of $S$ is a $(d-1)$-dimensional simplex, which has an intrinsic embedded structure---lower-dimensional simplices appear as faces of higher-dimensional ones. Therefore, a straightforward guess is that the function $\Delta S$ equals its lower-dimensional counterpart on the faces of $\Delta_0^d$. The rigorous statement is given in the following theorem.
\begin{theorem} \label{thm:dS-embed}
Denote by $\Delta S^{(d-1)}$ the residual term from \eqref{Sq-asymp} in $(d-1)$ dimensions. Then, for any $q'\in \Delta_0^{d-1}$,
\begin{equation} \label{dS-embed}
    \Delta S^{(d-1)}(q') = \Delta S \qty(q'+\frac{1}{d(d-1)}, -\frac1d) + \frac{1+\ln 2}{2}+\ln\frac{\Gamma(\tfrac{d}{2})}{\Gamma(\tfrac{d-1}{2})},
\end{equation}
where $\Delta S$ is defined by its limiting value when one argument equals $-\frac1d$. That is, $\Delta S^{(d-1)}$ is identical to $\Delta S$ on a $(d-2)$-dimensional face of $\Delta_0^d$, up to a constant shift.
\end{theorem}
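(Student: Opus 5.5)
The plan is to compute the boundary value of $\Delta S$ as a limit along an explicit curve in parameter space. By Theorem \ref{thm:Sq-asymp}, $\Delta S$ is uniformly Lipschitz on $\Delta_0^d$ and hence has a unique continuous extension to the closure. So it suffices to find, for each $q'\in\Delta_0^{d-1}$, one sequence $q^{(n)}\in\Delta_0^d$ that converges to $\bigl(q'+\frac{1}{d(d-1)},-\frac1d\bigr)$ and along which $\Delta S(q^{(n)})$ has the claimed limit.

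\textbf{Construction of the curve.} Fix $q'$ and let $\mu'\in\BbbR^{d-1}$ be its $(d-1)$-dimensional Bingham closure (Theorem \ref{thm:bingcl}), shifted so that $\max_j\mu'_j=0$; this is allowed by \eqref{Z-mupt-eq}. Set $\mu=(\mu',\nu)$ and let $\nu\to-\infty$; by symmetry the ordering of the last component is irrelevant. Since $\mu'$ is fixed, the expansion \eqref{zj-asymp} (i.e.\ Proposition \ref{prop:za-asymp} with $s=1$) gives
\[
z_j\to z'_j=q'_j+\tfrac1{d-1}\ (j<d),\qquad z_d=\tfrac{1}{2|\nu|}+O(|\nu|^{-2}).
\]
Consequently
\[
q_j=z_j-\tfrac1d\to q'_j+\tfrac{1}{d-1}-\tfrac1d=q'_j+\tfrac{1}{d(d-1)},\qquad q_d\to-\tfrac1d,
\]
which is exactly the boundary point in \eqref{dS-embed}.

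\textbf{Expanding the three pieces.} I will use the gauge-free form $S=\sum_j\mu_jz_j-\ln Z$, which is shift invariant because $\sum_j z_j=1$ and $Z(\mu+t)=\e^tZ(\mu)$. The three pieces expand as follows.
\begin{enumerate}[(a)]
\item For $j<d$, the terms $\mu_jz_j$ converge to $\mu'_jz'_j$. The last term satisfies $\nu z_d=-\tfrac12+O(|\nu|^{-1})$.
\item The order-$0$ expansion of Theorem \ref{thm:Mdk-asymp-mult}, converted through \eqref{Zd-is-Md0}, gives $Z^{(d)}(\mu)=\frac{\Gamma(d/2)}{\Gamma((d-1)/2)}|\nu|^{-1/2}Z^{(d-1)}(\mu')\,(1+O(|\nu|^{-1}))$. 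Taking logarithms, $\ln Z^{(d)}=\ln\frac{\Gamma(d/2)}{\Gamma((d-1)/2)}-\frac12\ln|\nu|+\ln Z^{(d-1)}(\mu')+o(1)$.
\item Hence $S^{(d)}=S^{(d-1)}(q')-\frac12-\ln\frac{\Gamma(d/2)}{\Gamma((d-1)/2)}+\frac12\ln|\nu|+o(1)$. The quasi-entropy satisfies $\hat S^{(d)}=\hat S^{(d-1)}(q')+\frac12\ln(2|\nu|)+o(1)$, using $-\frac12\ln z_d=\frac12\ln(2|\nu|)+o(1)$.
\end{enumerate}

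\textbf{Conclusion.} Subtracting $\hat S^{(d)}$ from $S^{(d)}$, the divergent $\frac12\ln|\nu|$ terms cancel, and
\[
\Delta S=\Delta S^{(d-1)}(q')-\tfrac{1+\ln2}{2}-\ln\tfrac{\Gamma(d/2)}{\Gamma((d-1)/2)}+o(1).
\]
Letting $\nu\to-\infty$ and using continuity of the extension yields \eqref{dS-embed}.

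The only delicate point is the bookkeeping of the $\frac12\ln|\nu|$ divergences. These cancel exactly only because the leading asymptotics of $Z^{(d)}$ and $z_d$ carry matching powers of $|\nu|$, and the finite constants $-\frac12$ (from $\nu z_d$) and $-\frac12\ln2$ (from $z_d\sim(2|\nu|)^{-1}$) must be tracked precisely. All error terms are harmless here because $\mu'$ is fixed, so the non-uniformity of Proposition \ref{prop:za-asymp} in $\mu'$ never enters.
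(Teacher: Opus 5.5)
Your proposal is correct and follows essentially the same route as the paper: you fix the $(d-1)$-dimensional closure $\mu'$ of $q'$ and send only $\mu_d\to-\infty$. You then use the order-zero asymptotics $z_j\to z_j^{(d-1)}(\mu')$, $|\mu_d|z_d\to\frac12$ and $\sqrt{|\mu_d|}\,Z\to\frac{\Gamma(d/2)}{\Gamma((d-1)/2)}Z^{(d-1)}(\mu')$ so that the $\frac12\ln|\mu_d|$ divergences cancel, and conclude via the continuous extension of $\Delta S$ from Theorem \ref{thm:Sq-asymp}, exactly as the paper does. One small point: Proposition \ref{prop:za-asymp} is in fact uniform in $\mu'$, so your closing caveat about non-uniformity is unnecessary, though harmless since $\mu'$ is fixed.
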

\begin{proof}
Since $\Delta S$ is continuous up to the boundary of $\Delta_0^d$, we simply need to prove that the limit of $\Delta S$ as the $d$-th component of $q$ approaches $-\frac1d$ equals the LHS. 
We fix $q'$ on the relative interior of $\Delta_0^{d-1}$, let $\mu'$ be the unique $(d-1)$-dimensional parameter associated with $q'$ (guaranteed by Proposition \ref{prop:qj-surj}). Then, we send only $\mu_d\to-\infty$, holding $\mu'=[\mu_1,\ldots,\mu_{d-1}]^T$ fixed.

We quote the asymptotic expansion of $z_j$ when $\mu_d\to-\infty$ from \eqref{zj-asymp-1}, as well as the expansion of $Z$ from \eqref{Mdk-asymp-mult}:
\[Z(\mu) = \frac{\Gamma(\tfrac{d}{2})}{\Gamma(\tfrac{d-1}{2})} \frac{Z^{(d-1)}(\mu')}{\sqrt{|\mu_d|}} + O(\llbracket\mu\rrbracket_{\frac12}|\mu_d|^{-1}),\]
where the coefficient of the $O(\cdot)$ term depends on $d$ only. Then, we get the following limits:
\begin{align*}
    \lim_{\mu_d\to-\infty} z_j &= z_j^{(d-1)}(\mu'),\ j=1,2,\ldots,d-1, \\
    \lim_{\mu_d\to-\infty} |\mu_d| z_d &= \frac12, \\
    \lim_{\mu_d\to-\infty} \sqrt{|\mu_d|} Z &= \frac{\Gamma(\tfrac{d}{2})}{\Gamma(\tfrac{d-1}{2})} Z^{(d-1)}(\mu').
\end{align*}
Hence, the limit of the point $q(\mu)$ lies on a $(d-2)$-dimensional face of $\Delta_0^d$:
\begin{equation} \label{q-face-limit}
    \lim_{\mu_d\to-\infty} q(\mu)
=\lim_{\mu_d\to-\infty}\qty[z(\mu)-\frac1d]=\begin{bmatrix}
    z^{(d-1)}(\mu') -\frac1d \\ -\frac1d
\end{bmatrix} \\
= \begin{bmatrix}
    q' +\frac1{d(d-1)} \\ -\frac1d
\end{bmatrix}.
\end{equation}
Note that the limit point is indeed $q'=q^{(d-1)}(\mu')$ if we identify the $(d-2)$-dimensional face with $\Delta_0^{d-1}$.

Then, we study the limit of $\Delta S=S-\hat S$ as $\mu_d\to-\infty$:
\[\lim_{\mu_d\to-\infty} \Delta S(q(\mu)) =
    \lim_{\mu_d\to-\infty} \qty[\sum_{j=1}^d\mu_j z_j - \ln Z + \frac12 \ln(z_1\cdots z_d)].\]
Extracting the bounded terms corresponding to the components $\mu'$, we get
\[\lim_{\mu_d\to-\infty} \Delta S(q(\mu)) = \sum_{j=1}^{d-1} \mu_j z_j^{(d-1)} + \frac12\ln(z_1^{(d-1)}\cdots z_{d-1}^{(d-1)}) + \lim_{\mu_d\to-\infty} \qty(\mu_d z_d + \ln\frac{\sqrt{z_d}}{Z}).\]
We use the asymptotic relations $z_d\sim (2|\mu_d|)^{-1}$ and $Z \sim |\mu_d|^{-\frac12} \frac{\Gamma(\tfrac{d}{2})}{\Gamma(\tfrac{d-1}{2})} Z^{(d-1)}$ on the final term. Eventually, the logarithmic singularities cancel out, and what remains is the desired constant value:
\begin{equation} \label{dS-face-limit}
\begin{aligned}
    \lim_{\mu_d\to-\infty} \Delta S(q(\mu)) &= \sum_{j=1}^{d-1} \mu_j z_j^{(d-1)} + \frac12\ln(z_1^{(d-1)}\cdots z_{d-1}^{(d-1)}) \\
    &\quad -\frac12 -\ln Z^{(d-1)} -\ln\frac{\Gamma(\tfrac{d}{2})}{\Gamma(\tfrac{d-1}{2})} -\frac12\ln 2 \\
    &=\Delta S^{(d-1)}(q^{(d-1)}(\mu')) - \frac{1+\ln 2}{2}-\ln\frac{\Gamma(\tfrac{d}{2})}{\Gamma(\tfrac{d-1}{2})}.
\end{aligned}
\end{equation}
We have substituted the expression of $\Delta S^{(d-1)}$ in $(d-1)$ dimensions. By the continuity of $\Delta S$ up to the boundary, the value of $\Delta S$ at the limit point \eqref{q-face-limit} is exactly the limit of the function values \eqref{dS-face-limit}. Reorganizing the terms, we obtain the exact formula \eqref{dS-embed} at $q'=q^{(d-1)}(\mu')$:
\[\Delta S^{(d-1)}(q') = \Delta S\qty(q'+\frac{1}{d(d-1)}, -\frac1d)+ \frac{1+\ln 2}{2}+\ln\frac{\Gamma(\tfrac{d}{2})}{\Gamma(\tfrac{d-1}{2})}.\]
Finally, due to the continuity of $\Delta S$ and $\Delta S^{(d-1)}$, the fact that they satisfy \eqref{dS-embed} on the relative interior of a $(d-2)$-dimensional face indicates that the same relation holds for the closure of the face, as desired.
\end{proof}

\begin{remark}
A natural corollary of Theorem \ref{thm:dS-embed} is that the profile of $\Delta S$ on each $(s-1)$-dimensional face of $\Delta_0^d$ is identical to its counterpart $\Delta S^{(s)}$ up to a constant shift. More specifically, if $q'\in S_0^{s}$ is identified with the point $[(q')^T+\frac1s-\frac1d, -\frac1d,\cdots,-\frac1d]^T$ on a $(s-1)$-dimensional face of $\Delta_0^d$, then by connecting the equalities \eqref{dS-embed} from dimensions $d$ to $s+1$, we get
\begin{equation} \label{dS-embed-s}
    \Delta S^{(s)}(q') = \Delta S\qty(q'+\frac1s-\frac1d, -\frac1d,\cdots,-\frac1d) + (d-s)\frac{1+\ln 2}{2} + \ln \frac{\Gamma(\frac{d}{2})}{\Gamma(\frac{s}{2})}.
\end{equation}
In particular, when $d=1$, the Bingham closure is trivial with $q^{(1)}\equiv 0,\ S^{(1)}\equiv 0$, so by taking $s=1$ in \eqref{dS-embed}, we get the value of the $d$-dimensional $\Delta S$ at the vertices of $\Delta_0^d$:
\begin{equation}
    \Delta S\qty(1-\frac1d,-\frac1d,\cdots,-\frac1d)=-(d-1)\frac{1+\ln 2}{2} - \ln \frac{\Gamma(\frac{d}{2})}{\sqrt{\pi}}.
\end{equation}
Letting $d=3$, we find that $\Delta S^{(3)}(\frac23,-\frac13,-\frac13)=-(1+\ln 2) - \ln \frac12=-1,$ which is exactly the conclusion we obtained in \cite{shi_molecular_2026}.
\end{remark}

Theorem \ref{thm:dS-embed} fortifies the idea of dimensional hierarchy in Bingham distributions: lower-dimensional Bingham distributions appear as the limit of their higher-dimensional counterparts when one or more of the eigenvalues $\mu_j$ go to $-\infty$, or equivalently, some second-order moments $z_j$ shrink to zero.
This fact has already been embodied in the asymptotic expansions (equations \eqref{DMdk-asymp-mult}, \eqref{za-asymp}, etc.), where the function $Z$ and moments $z_\alpha$ from lower dimensions appear in the leading terms of the asymptotic series.


\section{Conclusion}

In this paper, we have developed a uniform asymptotic framework for the singular regimes of the Bingham distribution in arbitrary dimensions, allowing multiple eigenvalues of $B$ to diverge without restrictions on their relative scales. 
Starting from an inverse-Laplace representation of the normalizing constant, we deformed the contour to obtain exponentially convergent formulas for $Z$ and its derivatives. These formulas yield asymptotic expansions with error estimates uniform in all eigenvalue configurations, regardless of how the unbounded eigenvalues diverge. Notably, the coefficients of these expressions come from the lower-dimensional Bingham quantities, reflecting the dimensional-hierarchical structure. In addition, the asymptotic series has an absolutely convergent limit when the number of bounded eigenvalues is even.

The expansions imply precise asymptotics for Bingham moments such as $z_j\sim\frac{1}{2|\mu_j|}$ ($\mu_j\to-\infty$), and provide useful initial approximations for numerical Bingham closure. They also lead to the entropy decomposition
\[ S(q)=-\frac12\ln(z_1\cdots z_d)+\Delta S(q), \]
where $\Delta S$ is uniformly Lipschitz on the moment simplex and extends continuously to its boundary.
Moreover, the residual agrees, up to an explicit additive constant, with the residual for the corresponding lower-dimensional Bingham distribution. Thus, dimensional hierarchy is reflected simultaneously in $Z$, the moments, and the entropy.

The conclusions of Theorems \ref{thm:Sq-asymp} and \ref{thm:dS-embed} are also directly applicable to the study of anisotropic materials.
Under the (non-diagonalized) Bingham distribution $\mathrm{Bing}(B)$ with $\tr B=0$ and its corresponding $Q$-tensor $Q=\BbbE(mm^T-\frac1d I)$, the decomposition is translated into
\[S(Q) = B:Q-\ln Z(B) = -\frac12\ln\det(Q+\frac1d I) + \Delta S(Q),\]
where $S(Q)=S(\lambda(Q))$ also has Lipschitz continuity by Weyl's inequality on the eigenvalues \cite{golub_matrix_2013}: $\|\lambda(A)-\lambda(B)\|\le c\|A-B\|.$ 
In the study of liquid crystals, this formulation enables efficient evaluation of the mean-field energy through various approximation techniques \cite{shi_molecular_2026}, and accelerates the computation of molecular-based $Q$-tensor dynamics \cite{han_microscopic_2015}.

Several directions remain open. Higher-order moment expansions may improve closure algorithms, while sharper analysis of derivatives of the entropy could establish higher boundary regularity of $\Delta S$, in particular $C^2$ regularity.

\bibliographystyle{siam}
\bibliography{Bingham.bib}

\appendix

\section{Derivation of the integral representation} \label{app:proof-of-lt}

We prove Theorem \ref{thm:Z-lt}.
\begin{proof}[Proof of Theorem \ref{thm:Z-lt}]
Using the Gaussian integral and analytical continuation, we have that
\begin{equation}\label{comp-gauss-int}
    \int_{-\infty}^\infty \e^{-ax^2}\d x = \left(\frac{\pi}{a} \right)^{\frac12},\ \Re a>0,
\end{equation}
where the complex square root follows the same convention as stated in the proposition. 
For any $\lambda\in\BbbC$ with $\Re\lambda>\max\{\mu_i\}$, we have by \eqref{comp-gauss-int} that
\begin{equation} \label{J-lambda}
    J(\lambda)=\int_{\BbbR^d} \e^{\sum_j (\mu_j-\lambda) x_j^2} \d x_j = \pi^{\frac{d}{2}}\prod_{j=1}^d (\lambda-\mu_j)^{-\frac12}.
\end{equation}
It can also be expressed by radial integration:
\begin{align*}
    J(\lambda) &= \int_0^\infty \d r \int_{\pt B_r} \e^{\sum_j (\mu_j-\lambda) x_j^2} \d S(x) \\
    &= \int_0^\infty r^{d-1} \e^{-\lambda r^2}\cdot \omega_d Z(r^2\mu)\d r.
\end{align*}
Using a change of variable $r^2\to t$, we find that $J(\lambda)$ is in fact a Laplace transform of the function $f(t) = \frac{\omega_d}{2} t^{\frac{d}{2}-1} Z(t\mu)$:
\begin{equation}
    J(\lambda) = \frac{\omega_d}{2} \int_0^\infty \e^{-\lambda t} t^{\frac{d}{2}-1} Z(t\mu)\d t.
\end{equation}
Using the inverse Laplace transform \cite{abramowitz_handbook_2013} on $J(\lambda)$ and evaluating $f(t)$ at $t=1$, we immediately get \eqref{Zd-is-inv-lt}.
\end{proof}

\section{Differential equations of the normalizing constant}

In the appendix, we discuss some differential equations satisfied by the Bingham normalizing constant $Z(\mu)$ and its derivatives \cite{sei_calculating_2015}. This result immediately yields the moment identity \eqref{zj-zk-zjk-id}.
\begin{proposition} \label{prop:Zd-pfaff}
Suppose that $\mu\in\BbbR^d$ and that $Z$ is defined by \eqref{Z-mu}. Then,
\begin{align}
    \sum_j \pt_j Z &= Z, \label{sum-Zj-is-Z} \\
    (\pt_j-\pt_k)Z &= 2(\mu_j-\mu_k)\pt_{jk} Z. \label{zj-zk-zjk}
\end{align}
\end{proposition}
\begin{proof}
We quote the integral representation \eqref{Zd-hankel} on a Hankel-type contour $C$. 

Since the function $F(z)=\e^z\prod_j (z-\mu_j)^{-\frac12}$ converges to 0 on both ends of the contour, we have that
\[0 = \oint_C F'(z)\d z
=\oint_C F(z) \d z - \sum_{j=1}^d \frac12 \oint_C \frac{F(z)}{z-\mu_j} \d z.\]
Up to dividing by the constant $\frac{\Gamma(\frac{d}{2})}{2\pi\ii}$, the first term equals $Z(\mu)$, while the second term above equals the derivatives $\pt_j Z$ by \eqref{DZd-hankel}, so \eqref{sum-Zj-is-Z} is correct.

As for \eqref{zj-zk-zjk} (with $j\neq k$, trivial otherwise), we utilize \eqref{DZd-hankel} again to get that
\begin{align*}
    \text{LHS} &= \frac{\Gamma(\frac{d}{2})}{2\pi\ii} \cdot \frac12 \oint_C F(z)\qty(\frac{1}{z-\mu_j} - \frac{1}{z-\mu_k}) \d z \\
    &=\frac{\Gamma(\frac{d}{2})}{2\pi\ii} \cdot \frac12 \oint_C F(z)\frac{\mu_j-\mu_k}{(z-\mu_j)(z-\mu_k)} \d z=\text{RHS},
\end{align*}
where in the last step we extract the constant factor $(\mu_j-\mu_k)$ from the integral to find the remaining terms to be identical to $2\pt_{jk} Z$.
\end{proof}


\end{document}